\def\red{}
\def\l{\hspace{.5mm}\vert\hspace{.5mm}}
\def\ll{\hspace{.3mm}\Big\vert\hspace{.3mm}}
\def\card{\protect\operatorname{card}}
\def\eor{\protect\operatorname{eor}}
\def\image{\protect\operatorname{Im}(\pi^*)}
\def\sor{\protect\operatorname{sor}}
\def\sign{\protect\operatorname{sign}}
\def\F{\protect\operatorname{Conf}}
\def\UF{\protect\operatorname{UConf}}
\def\DnT{\protect\operatorname{D}_n\!T}
\def\aop{\langle k_1,x_1,p_{1},q_{1}\rangle\cdots\langle k_m,x_m,p_{m},q_{m}\rangle}
\def\UDnT{\protect\operatorname{UD}_n\!T}
\newcommand*{\dt}[1]{%
  \accentset{\mbox{$_\centerdot$}}{#1}}
\newtheorem{proposition}{Proposition}[section]
\newtheorem{corollary}[proposition]{Corollary}
\newtheorem{definition}[proposition]{Definition}
\newtheorem{theorem}[proposition]{Theorem}
\newtheorem{remark}[proposition]{Remark}
\newtheorem{example}[proposition]{Example}
\newtheorem{lemma}[proposition]{Lemma}
\newtheorem{examples}[proposition]{Examples}
\begin{document}

\title{Cohomology ring of \red{tree braid groups and exterior face rings}}

\author{Jes\'us Gonz\'alez and Teresa Hoekstra-Mendoza}

\date{\empty}

\maketitle

\begin{abstract}
For a tree $T$ and a positive integer $n$, let $B_nT$ denote the $n$-strand braid group on $T$. We use discrete Morse theory techniques to show that the cohomology ring $H^*(B_nT)$ is encoded by an explicit abstract simplicial complex $K_nT$ that measures $n$-local interactions among essential vertices of $T$. We show that, in many cases (for instance when $T$ is a binary tree), $H^*(B_nT)$ is the exterior face ring determined by $K_nT$.
\end{abstract}

{\small 2010 Mathematics Subject Classification: 20F36, 55R80, 57M15, 57Q70.}

{\small Keywords and phrases: Tree braid group, cubical cup-product, discrete Morse theory, Farley-Sabalka gradient field.}

\section{Main results}\label{mainresult}
For a finite graph $\Gamma$ and a positive integer $n$, let $\F_n\Gamma$ denote the configuration space of $n$ ordered points on $\Gamma$,
$$
\F_n\Gamma\coloneqq\left\{(x_1,\ldots,x_n)\in \Gamma^n\colon x_i\neq x_j \mbox{ for } i\neq j \right\}.
$$
The usual right action of the $n$-symmetric group $\Sigma_n$ on $\F_n\Gamma$ is given by $(x_1,\ldots,x_n)\cdot \sigma=(x_{\sigma(1)},\ldots,x_{\sigma(n)})$, and $\UF_n\Gamma$ stands for the corresponding orbit space, the configuration space of $n$ unlabelled points on $\Gamma$. Both $\F_n\Gamma$ and $\UF_n\Gamma$ are known to be aspherical (\cite{MR2701024,MR1873106}); their corresponding fundamental groups are denoted by $P_n\Gamma$ (the pure $n$-braid group on $\Gamma$) and $B_n\Gamma$ (the full $n$-braid group or, simply, the $n$-braid group on $\Gamma$). We focus on the case of a tree $\Gamma=T$.

Besides its central role in geometric group theory, graph braid groups have applications in areas outside pure mathematics such as robotics, topological quantum computing and data science. Yet, there is a relatively limited knowledge of the algebraic topology \red{properties} of \red{a} graph braid group (or, for that matter, of a tree braid group), particularly concerning its cohomology ring structure.

Using discrete Morse theory techniques on Abrams' cubical model $\UDnT$ for $\UF_nT$ (reviewed below), D.~Farley gave in \cite{MR2216709} an efficient description of the \emph{additive} structure of the cohomology of $B_nT$. Later, and in order to get at the \emph{multiplicative} structure, the Morse theoretic methods were replaced in \cite{MR2359035} by the use of a Salvetti complex $\mathcal{S}$ obtained by identifying opposite faces \red{of cells} in $\UDnT$. Being a union of tori, $\mathcal{S}$ has a well understood cohomology ring. Yet more importantly, the projection map $q\colon\UDnT\to\mathcal{S}$ induces a surjection in cohomology. Farley's main result in \cite{MR2359035} is a description of a set of generators for Ker$(q^*)$, which yields a presentation for the cohomology ring of $B_nT$.

Although \cite{MR2359035} includes an algorithm for performing computations mod Ker$(q^*)$, the price of not working at the Morse theoretic level is that Farley's presentation includes many non-essential generators. As a result, calculations are hard to work with, both in concrete examples, as well as in theoretical developments (cf.~Remark~\ref{comparewithSchierer} below). In particular, Farley-Sabalka's conjecture (\cite[Conjecture~5.7]{MR2355034}) that $H^*(B_nT;\mathbb{Z}_2)$ is an exterior face ring, suggested on the basis of extensive concrete calculations, was left open. 

\red{In this paper} we combine Farley-Sabalka's original Morse theoretic approach with Forman's Morse-theoretic description of cup products to prove the integral version of Farley-Sabalka's conjecture for a large family of trees. The statement in Theorem~\ref{maintheorem} below, which focuses on binary trees, i.e., on trees all whose essential vertices have degree three, disproves Conjecture 5.17 in~\cite{MR2516176} by exhibiting an infinite family of non-linear trees $T$ all whose braid group cohomology rings are exterior face rings.

\begin{theorem}\label{maintheorem}
Assume $T$ is a binary tree. For a commutative ring $R$ with 1, the cohomology ring $H^*(B_nT;R)$ is the exterior face ring $\Lambda_R(K_nT)$ determined by a simplicial complex $K_nT$. Explicitly, $H^*(B_nT;R)$ is the quotient $\Lambda/I$, where $\Lambda$ is the exterior graded $R$-algebra generated by the vertex set of $K_nT$, and $I$ is the ideal generated by monomials corresponding to non-faces of $K_nT$.
\end{theorem}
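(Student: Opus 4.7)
The plan is to work entirely at the Morse-theoretic level, combining the Farley--Sabalka discrete gradient vector field $W$ on the Abrams cubical model $\UDnT$ with Forman's Morse-theoretic description of the cup product, rather than routing cup products through the Salvetti quotient $q\colon\UDnT\to\mathcal{S}$. The first step is to recall the explicit classification of the critical cells of $W$: each critical $d$-cell is encoded by a tuple of local data recording, at essential vertices of $T$, which strands are ``blocked'' and which are in ``order-respecting'' position. In the binary case, the restrictive combinatorics (degree three at every essential vertex) forces the Morse differential with $R$ coefficients to vanish, so that $H^*(B_nT;R)$ is additively free on the set of critical cocells; this recovers Farley's additive calculation and identifies the degree-$1$ generators with the vertex set of the to-be-defined complex $K_nT$.

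Next, I would define $K_nT$ intrinsically: its vertex set is the collection of critical $1$-cells of $W$, and a finite set $\{v_1,\ldots,v_d\}$ of vertices spans a simplex of $K_nT$ precisely when the associated local blocking data are jointly compatible, i.e.\ when there exists a critical $d$-cell whose local data at each essential vertex is the union of the local data of the $v_i$'s. This is the ``$n$-local interaction'' condition alluded to in the abstract, and in a binary tree it is easily stated as a disjointness or non-collision condition on the blocked strands.

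The heart of the proof is to compute cup products via Forman's formula, which expresses $\alpha\smile\beta$ on a critical cell $c$ as a signed count of pairs of $W$-gradient paths from the two Alexander--Whitney halves of $c$ to the critical cells dual to $\alpha$ and $\beta$. For a face $\{v_1,\ldots,v_d\}$ of $K_nT$, the target critical $d$-cell exists by construction, and I expect that in the binary case exactly one pair of gradient paths contributes with sign $\pm 1$, so that the product is a unit multiple of the dual critical cocell and, in particular, is nonzero. For a non-face, either no critical $d$-cell realizes the combined local data, or the gradient-path count cancels to zero. Anticommutativity and the vanishing of squares of degree-$1$ classes follow from the cubical (hence exterior) structure of the cup product on $\UDnT$, together with the Farley--Sabalka sign conventions.

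The main obstacle will be step three: controlling gradient paths in $\UDnT$ with enough precision to verify that (i) every face of $K_nT$ contributes a nonzero product, and (ii) every non-face contributes zero with no leftover relations. Gradient paths can interact in complicated ways whenever strands cross the same essential vertex, and the delicate point is to show that in a binary tree these interactions always reduce, after cancellation, either to a single transverse intersection (for faces) or to a pair of paths that cancel in sign (for non-faces). I anticipate that the degree-three constraint at essential vertices is precisely what prevents higher-order interference patterns that, in a non-binary tree, would introduce relations beyond the Stanley--Reisner ideal and spoil the exterior face ring structure. Once (i) and (ii) are established, the surjection $\Lambda_R(K_nT)\twoheadrightarrow H^*(B_nT;R)$ is immediate from the exterior cubical cup product and the face condition, and injectivity follows by comparing $R$-ranks on each graded piece against the additive count of critical cells.
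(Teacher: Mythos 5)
Your overall framework—Farley--Sabalka's gradient field on $\UDnT$, Forman's Morse-theoretic cup products, and a combinatorial $n$-interaction complex $K_nT$ built from critical $1$-cells—is indeed the paper's approach. (Minor point: the vanishing of the Morse differential holds for \emph{all} trees, not just binary ones; it is a consequence of the structure of Farley--Sabalka's field, not of the degree-three constraint.) However, there is a genuine gap in your third step, and it is fatal to the argument as proposed.

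You assert that for a non-face $\{v_1,\ldots,v_d\}$ of $K_nT$, ``either no critical $d$-cell realizes the combined local data, or the gradient-path count cancels to zero.'' This is false with the natural identification of critical $1$-cells with degree-one cohomology classes. The paper distinguishes between \emph{strongly} interacting, \emph{weakly} interacting, and non-interacting families of basis elements; only the strongly interacting families give faces of $K_nT$, and for non-interacting families the product does vanish. But \emph{weakly} interacting non-faces produce products that are nonzero: the lower-path computation (Theorem~\ref{productsvialowerpaths}) shows such a product is a signed sum of several basis elements, not zero, and this phenomenon already occurs for binary trees. Consequently there is no well-defined ring homomorphism $\Lambda_R(K_nT)\to H^*(B_nT;R)$ built from the naive inclusion $V_nT\hookrightarrow H^1(B_nT;R)$, and the rank-count argument you sketch cannot get off the ground. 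The paper's resolution is a \emph{change of basis} in $H^1$ (equation~(\ref{chbs})), replacing each critical $1$-cocell by a carefully chosen linear combination of $1$-cocells, after which the weak-interaction products become zero; one then must re-prove that strong-interaction products still reproduce a basis (Section~6). This change of basis and the accompanying re-verification are the crux of the proof, and your proposal omits them entirely. Without them, the expected ``pair-of-gradient-paths cancels for non-faces'' step will not materialize, because the lower gradient paths generated at the branching vertex contribute with a net nonzero coefficient.
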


As noted in \cite[p.~68]{MR2355034}, the isomorphism type of a complex $K_nT$ as the one in Theorem~\ref{maintheorem} is well determined. We refer to $K_nT$ as the \emph{$n$-interaction complex of $T$}. A description of $K_nT$ as an abstract simplicial complex is given in Definition~\ref{interactioncomplex} below. The explicit definition allows us, for instance, to easily deduce \red{a concrete} right-angled Artin group presentation for $B_nT$ when $T$ is a linear \red{binary} tree (Example \ref{linearcase} below). This complements the inductive method in \cite{MR3253777} proving that linearity is a sufficient\footnote{The condition is known to be necessary and sufficient.} condition for a tree to have right-angled Artin braid groups.

The definition of $K_nT$ applies for any tree and we show that the resulting combinatorial object encodes much of the ring structure of $H^*(B_nT;R)$, whether $T$ is binary of not. Indeed, we generalize Theorem~\ref{maintheorem} in two directions. On the one hand, the ring-isomorphism assertion $H^*(B_nT;R)\cong\Lambda_R(K_nT)$ holds as long as $T$ is a tree with binary core (Theorem~\ref{binarycore} below). Furthermore, we show that, for any tree $T$, the vertices of $K_nT$ can be thought of as giving an $R$-basis of $H^1(B_nT;R)$, while the cup-product-based rule $\{v_1,\ldots,v_m\}\mapsto v_1\cdots v_m$ sets a 1-1 correspondence between the family of ($m-1$)-simplices of $K_nT$ and an $R$-basis of $H^m(B_nT;R)$. More importantly, while cup squares are known to vanish in $H^*(B_nT;R)$, certain (square-free) products $v_1\cdots v_m$ are non-zero even when $\{v_1,\ldots,v_m\}$ fails to be a face of $K_nT$ (this can happen only if $T$ is not a tree with binary core). In any such case, we give a closed formula (Theorem~\ref{productsvialowerpaths}) to write any such product $v_1\cdots v_m$ as an $R$-linear combination of basis elements, thus completing a full description of the cup-product structure in the cohomology of $B_nT$ for any tree $T$. Details are summarized in Theorem~\ref{casogeneral} below.


The techniques used in this work (discrete Morse theoretic approach to cup products) should be a valuable tool \red{in understanding the algebraic topology properties of} discrete models for other spaces, such as non-particle configuration spaces, \red{as well as} generalized (e.g., no-$k$-equal) configuration spaces.

\begin{remark}{\em
Ghrist's pioneering work led to conjecture that any pure braid group $P_n\Gamma$ on a graph $\Gamma$ would be a right-angled Artin group. In the case of full braid groups $B_n\Gamma$, \cite{MR2833585,MR3426912} give two characterizations (one combinatorial and another cohomological) of the right-angled-Artin condition. For instance, for $\Gamma=T$ a tree, $B_nT$ is a right-angled Artin group if and only if $H^*(B_nT)$ is the exterior face ring of a \emph{flag} complex. Theorem~\ref{maintheorem} and its generalized version in Theorem~\ref{binarycore} assert that, in the full braid group setting and for trees with binary core, Ghrist's conjecture is true after removal of the flag requirement.
}\end{remark}

The description of the complex $K_nT$, as well as an explicit statement of Theorem~\ref{casogeneral}, and a couple of explicit illustrations (Examples~\ref{3aspectos} and~\ref{linearcase}) of Theorem~\ref{maintheorem} require a few preparatory constructions. Unless otherwise noted, throughout the rest of the section $T$ stands for an arbitrary tree.

Fix once and for all a planar embedding together with a root (a vertex of degree~1) for $T$. Order the vertices of $T$ as they are first encountered through the walk along the tree that (a) starts at the root vertex, which is assigned the ordinal 0, and that (b) takes the left-most branch at each intersection given by an essential vertex (turning around when reaching a vertex of degree 1). Vertices of $T$ will be denoted by the assigned non-negative integer. An edge of $T$, say with endpoints $r$ and $s$, will be denoted by the ordered pair $(r,s)$, where $r<s$. Furthermore, the ordering of vertices will be transferred to \red{an ordering of} edges by declaring that the ordinal of $(r,s)$ is $s$. The resulting ordering of vertices and edges will be referred \red{to} as the $T$-order\footnote{This of course depends on the embedding and root chosen.}.

\begin{figure}[h!]
\centering
\begin{tikzpicture}[  scale=.75, rotate = 180, xscale = -1]
\tikzset{EdgeStyle/.style={->,font=\scriptsize,above,sloped,midway}}
\node[style={circle, draw, scale=.6}] (1) at ( 2, 3.45) {$0$};
\node[style={circle, draw, scale=.6}] (2) at ( 7.4, 3.45) {$x$};
\node[style={circle, draw, scale=.6}] (3) at ( 9.58, 1.28) {};
\node[style={circle, draw, scale=.6}] (4) at ( 10.5, 2.72) {};
\node (5) at ( 10.25, 3.8) {};
\node[style={circle, draw, scale=.6}] (6) at ( 10.6, 4) {};
\node (7) at ( 8.25, 1.91) {};
\node (8) at ( 8.97, 2.6) {};
\node (9) at ( 9.1, 3.34) {};
\node (10) at ( 8.94, 4.04) {};
\node (11)[scale=.3] at (4, 3.45){};
\node (12)[scale=.3] at (6, 3.45){};
\node at (3.1, 3.24){\tiny $\;\;0$-direction 1};
\draw (4)[dotted] to[bend right] (6); 
\draw (1) -- (11);
\draw (11)[dashed] -- (12);
\draw (12) node[above]{\tiny \hspace{5mm}$x$-direction 0 \ }-- (2);
\draw (2)--node[sloped,above,rotate=90]{\tiny $x$-direction 1}(3);
\draw (4) --node[sloped,above,rotate=30]{\tiny $x$-direction 2} (2);
\draw (6) --node[sloped, below, rotate=-20]{\tiny $x$-direction $d(x){-}1$} (2);
\end{tikzpicture}
\caption{The \red{$d(x)$} $x$-directions from an essential vertex $x$}
\label{x-directions}
\end{figure}
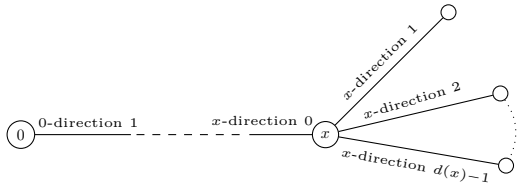

\red{Let $d(x)$ stand for the degree of a vertex $x$ of $T$, so} there are $d(x)$ ``directions'' \red{from} $x$. \red{For a vertex~$x$ different from the root, the direction from $x$} that leads to the root is defined to be the $x$-direction 0; $x$-directions $1, 2, \dots , d(x)-1$ \red{(if any)} are then chosen following the positive orientation coming from the planar embedding. See Figure \ref{x-directions}. For instance, \red{if $x$ is not the root and} the vertex \red{$y$} incident to $x$ in $x$-direction 0 is \red{not essential (i.e.~$d(y)\leq2$), then $y=x-1$. Likewise, if $d(x)\geq2$, then} $x+1$ is the vertex incident to $x$ in $x$-direction 1. \red{It will be convenient to think of the only direction from the root vertex $0$ as 0-direction 1, in particular there is no $0$-direction 0.}

Fix essential vertices $x_1<\cdots<x_m$ of T. The complement in $T$ of the set $\{x_1,\ldots,x_m\}$ decomposes into $1+ \sum_{i=1}^m \red{\left(d(x_i)-1\right)}$ components $C_{i,\red{\ell_i}}=C_{i,\red{\ell_i}}(x_1,\ldots,x_m)$, where $0\leq i\leq m$, \red{$\ell_0=1$,} and $1\leq\red{\ell_i}\leq d(x_i)-1$ for $i>0$. The closure of each $C_{i,\red{\ell_i}}$ is a subtree of $T$. $C_{0,1}$ is the component containing the root $0$, while $C_{i,\red{\ell_i}}$ (for $i>0$) is the component whose closure \red{contains} $x_i$ and is located on the $x_i$-direction $\red{\ell_i}$. The set $B(C_{i,\red{\ell_i}})$ of ``bounding'' vertices of a component $C_{i,\red{\ell_i}}$ is defined to be the intersection of the closure of $C_{i,\red{\ell_i}}$ with $\{x_1,\ldots,x_m\}$. \red{Note that $x_i\in B(C_{i,\ell_i})$ for $i>0$, however the root $0$ is not considered to be a bounding vertex of $C_{0,1}$, just as no leave of $T$ (i.e., a vertex of degree 1 other than the root) is considered to be a bounding vertex of any $C_{i,\ell_i}$.}

\begin{definition}[The $n$-interaction complex of $T$, $K_nT$]\label{interactioncomplex}
\begin{itemize}
\item[(a)] The vertex set $V_nT$ of $K_nT$ is the collection of all 4-tuples $\nu=\langle k,x,p,q\rangle$, where $k$ is a non-negative integer number, $x$ is an essential vertex of $T$, and $p=(p_{1},\dots, \red{p_r})$ and $q=(q_1, \dots, \red{q_s})$ are tuples of non-negative integer numbers satisfying the three conditions
\begin{itemize}
\item[$\bullet$]\red{$r+s=d(x)-1$, with $r>0<s;$}
\item[$\bullet$] \red{$k+|p|+|q|=n-1$, where $|p|:=\sum_{j=1}^r p_j$ and $|q|:=\sum_{j=1}^s q_j;$}
\item[$\bullet$] $p_j>0$ \red{for at least one $j\in\{1,\ldots,r\}$.}
\end{itemize}
\red{We stress that $r$ (i.e., the length of $p$) is one of the parameters determining the 4-tuple $\nu$. For instance, if $d(x)=6$ and $n=4$, then $\langle1,x,(0,1,0),(1,0) \rangle$ and $\langle1,x,(0,1),(0,1,0) \rangle$ are two different elements in $V_nT$. The length $s$ of $q$, on the other hand, is determined by $r$ and $d(x)$.}
\item[(b)] \red{For $\nu_1,\ldots,\nu_m\in V_nT$ with $\nu_i=\langle k_i,x_i,p_i,q_i\rangle$, $p_i=(p_{i,1},\dots, p_{i,r_i})$, $q_i=(q_{i,1}, \dots, q_{i,s_i})$ and so that $\hspace{.1mm}x_1<\cdots<x_m$, consider the components $C_{0,1}$ and $C_{i,\ell_i}$ $(1\leq i\leq m$ and $1\leq \ell_i\leq d(x_i)-1)$ of $T\setminus\{x_1, \ldots, x_m\}$ as defined above. Then, for $C\in\{C_{0,1},C_{i,\ell_i}\}$, the $C$-local information of $\nu_j$, denoted by $\ell_C(\nu_j)$, is defined by}
$$\red{\ell_{C_{0,1}}(\nu_j)=\begin{cases}
 k_j, & \mbox{if \,$x_j\in B(C_{0,1})$;} \\ 0, & \mbox{otherwise,}
\end{cases}
}$$
\red{and, for $i>0$,} 
\begin{equation}\label{localinformation}
\ell_{C_{i,\red{\ell_i}}}(\nu_j)=\begin{cases}
p_{i,\red{\ell_i}}, & \mbox{if \,$j=i$ and $\red{\ell_i} \leq \red{r_i};$} \\
q_{i,\red{\ell_i-r_i}}, & \mbox{if \,$j=i$ and $\red{\ell_i} > r_i;$} \\
\red{k_j,} & \mbox{if \,$j\neq i$ and $\red{x_j\in B(C_{i,\ell_i})};$} \\ 0, & \mbox{in any other case.}
\end{cases}
\end{equation}
Note that $\ell_C(\nu_j)=0$ whenever $x_j\not\in B(C)$.
\item[(c)] \red{The $n$-interaction complex of $T$ is the abstract simplicial complex $K_nT$ whose vertex set is $V_nT$ and whose $(m-1)$-simplices are given by families of vertices $\nu_1,\ldots,\nu_m$ as in item (b) satisfying}
\begin{equation}\label{localinteraction}
\sum_{j=1}^m\ell_{C_{i,\red{\ell_i}}}\!\left(\nu_j \right)\geq n\left(\rule{0mm}{4mm}\card(B(C_{i,\red{\ell_i}}))-1\right),
\end{equation}
for all $i\in\{0,1,\ldots,m\}$ and all relevant $\ell_i$, and in such a way that, for every $i>0$,~\emph{(\ref{localinteraction})} is a strict inequality for at least one $\ell_i\in\{1,\dots , r_i\}$.
\end{itemize}
\end{definition}
It is \red{an easy} arithmetic exercise (whose verification is left to the reader) to check that $K_nT$ is indeed a simplicial complex.

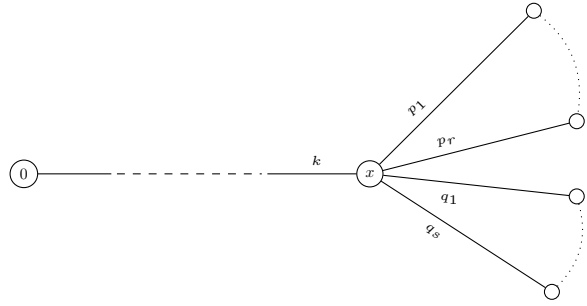
\begin{figure}
\centering
\begin{tikzpicture}[  scale=1.0, rotate = 180, xscale = -1]
\tikzset{EdgeStyle/.style={->,font=\scriptsize,above,sloped,midway}}
\node[style={circle, draw, scale=.6}] (1) at ( 2.8, 3.45) {$0$};
\node[style={circle, draw, scale=.6}] (2) at ( 7.4, 3.45) {$x$};
\node[style={circle, draw, scale=.6}] (3) at ( 9.58, 1.28) {};
\node[style={circle, draw, scale=.6}] (4) at ( 10.15, 2.75) {};
\node[style={circle, draw, scale=.6}] (0) at ( 10.15, 3.75) {};
\node (5) at ( 10.25, 3.8) {};
\node[style={circle, draw, scale=.6}] (6) at ( 9.82, 5.02) {};
\node (7) at ( 8.25, 1.91) {};
\node (8) at ( 8.97, 2.6) {};
\node (9) at ( 9.1, 3.34) {};
\node (10) at ( 8.94, 4.04) {};
\node (11)[scale=.3] at (4, 3.45){};
\node (12)[scale=.3] at (6, 3.45){};
\draw (0)--node[below]{\tiny $q_1\hspace{.7cm}$} (2);
\draw (0)[dotted] to[bend right] (6); 
\draw (3)[dotted] to[bend right] (4); 
\draw (1) -- (11);
\draw (11)[dashed] -- (12);
\draw (12) node[above]{\tiny  \hspace{1.3cm} $k$}-- (2);
\draw (2)--node[sloped,above,rotate=90]{\tiny  $p_1\hspace{1cm}$}(3);
\draw (4) --node[sloped,above,rotate=30]{\tiny  $p_{\red{r}}\hspace{.7cm}$} (2);
\draw (6) --node[sloped, below, rotate=-65]{\tiny  $q_{\red{s}}\hspace{.7cm}$} (2);
\end{tikzpicture}
\caption{The local information given by a vertex $\langle k,x,\red{(p_1,\ldots,p_r), (q_1,\ldots,q_s)} \rangle$ of $K_nT$}
\label{localinformationinapicture}
\end{figure}

Definition \ref{interactioncomplex} is dictated by discrete Morse theoretic considerations ---reviewed in latter sections. Our choice for using angle brackets instead of parenthesis for 4-tuples \red{in $V_nT$} will be justified later in the paper (Remark \ref{angles}). More important at this point is to explain the role of $K_nT$ as an object measuring ``local interactions'' between systems of ``local informations'' around essential vertices of $T$. For starters, we refer to a vertex $\nu=\langle k,x,\red{(p_1,\ldots,p_r),(q_1,\ldots,q_s)} \rangle\in V_nT$ as a \red{system of} local informations around the essential vertex $x$ \red{of $T$}. Indeed, \red{as illustrated in Figure \ref{localinformationinapicture}, we think of:}

\vspace{1.7mm}\ \hspace{2.3mm} \emph{(i)} \hspace{.3mm} 
$k$ \red{as} the local information of $\nu$ in $x$-direction~0,

\vspace{1.7mm}\ \hspace{2.3mm}\emph{(ii)} \hspace{.15mm} 
\red{$p_j$ ($1\leq j\leq r$) as} the local information of $\nu$ in $x$-direction \red{$j$}, and

\vspace{1.7mm}\ \hspace{1mm}\emph{(iii)} \hspace{.1mm}
\red{$q_j$ ($1\leq j\leq s$) as} the local information of $\nu$ in $x$-direction $\red{j+r}$. 

\vspace{2mm}\noindent 
\red{In these terms,} (\ref{localinformation}) gives a systematic way to spell out the information ingredients on a given \red{family of systems of} local informations. \red{Likewise,} \red{item \emph{(c)} in Definition~\ref{interactioncomplex} asserts that a family $\{\nu_1,\ldots,\nu_m\}$ of systems of} local informations around essential vertices \red{$x_1<\cdots<x_m$} of $T$ assemble a simplex of $K_nT$ \red{if, for each component $C$ of $T\setminus\{x_1,\ldots,x_m\}$,} the sum of the \red{$C$-local} informations of vertices~\red{$x_j$} bounding \red{$C$} is suitably large, depending on $n$ and on the number of bounding vertices of \red{$C$.} 

\begin{definition}\label{strongvsweak}
Let $\nu_1,\nu_2,\ldots,\nu_m\in V_nT$ be a family of systems of local informations around essential vertices \red{$x_1<x_2<\cdots<x_m$} of $T$. We say that $\nu_1,\ldots,\nu_m$ interact strongly provided $\{\nu_1,\ldots,\nu_m\}$ is a simplex of $K_nT$. We say that $\nu_1,\ldots,\nu_m$ interact weakly provided~(\ref{localinteraction}) holds for all relevant $i$ and $\ell_i$ but $\{\nu_1,\ldots,\nu_m\}$ fails to be a simplex of $K_nT$ ---so that, in fact,~(\ref{localinteraction}) is an equality for some $i>0$ and all $\ell_i\in\{1,\ldots,r_i\}$. In all other cases, we say that $\nu_1,\ldots,\nu_m$ do not interact.
\end{definition}

\begin{figure}[h!]
$$
\begin{tikzpicture}[x=.51cm,y=.51cm]
\draw(0,0)--(0,2);\draw(0,0)--(1.4,-1.4);\draw(0,0)--(-1.4,-1.4);
\draw(0,2)--(1.3,3.3);\draw(0,2)--(-1.3,3.3);
\draw(1.4,-1.4)--(3,-1.4);\draw(1.4,-1.4)--(1.4,-2.8);
\draw(-1.4,-1.4)--(-3,-1.4);\draw(-1.4,-1.4)--(-1.4,-2.8);
\node [below] at (-1.4,-2.8) {\scriptsize \red{root}};
\node [above] at (1.6,-1.45) {\scriptsize{$x_4$}};
\node [below] at (.05,-.1) {\scriptsize{$x_2$}};
\node [above] at (-1.6,-1.45) {\scriptsize{$x_1$}};
\node [above] at (0,2.2) {\scriptsize{$x_3$}};

\draw(-10,0)--(-10,2);\draw(-10,0)--(-8.6,-1.4);\draw(-10,0)--(-11.4,-1.4);
\draw(-10,2)--(-8.7,3.3);\draw(-10,2)--(-11.3,3.3);
\draw(-8.6,-1.4)--(-7,-1.4);\draw(-8.6,-1.4)--(-8.6,-2.8);
\draw(-11.4,-1.4)--(-13,-1.4);\draw(-11.4,-1.4)--(-11.4,-2.8);
\node [below] at (-8,-1.3) {\scriptsize{$\red{1}$}};
\node [below] at (-8.8,-1.55) {\scriptsize{$\red{0}$}};
\node [below] at (-8.8,-.43) {\scriptsize{$\red{2}$}};
\node [below] at (-10.85,-.8) {\scriptsize{$\red{2}$}};
\node [below] at (-11.6,-1.6) {\scriptsize{$\red{0}$}};
\node [below] at (-11.85,-.72) {\scriptsize{$\red{1}$}};
\node [below] at (-10.2,2) {\scriptsize{$\red{2}$}};
\node [below] at (-10.25,3.05) {\scriptsize{$\red{1}$}};
\node [below] at (-9.4,2.65) {\scriptsize{$\red{0}$}};
\node [below] at (8.6,-2.8) {\scriptsize \red{root}};

\draw(10,0)--(10,2);\draw(10,0)--(11.4,-1.4);\draw(10,0)--(8.6,-1.4);
\draw(10,2)--(11.3,3.3);\draw(10,2)--(8.7,3.3);
\draw(11.4,-1.4)--(13,-1.4);\draw(11.4,-1.4)--(11.4,-2.8);
\draw(8.6,-1.4)--(7,-1.4);\draw(8.6,-1.4)--(8.6,-2.8);
\node [below] at (12,-1.2) {\scriptsize{$\red{1}$}};
\node [below] at (11.2,-1.55) {\scriptsize{$\red{0}$}};
\node [below] at (11.2,-.43) {\scriptsize{$\red{7}$}};
\node [below] at (9.1,-.85) {\scriptsize{$\red{7}$}};
\node [below] at (8.4,-1.55) {\scriptsize{$\red{0}$}};
\node [below] at (8.15,-.72) {\scriptsize{$\red{1}$}};
\node [below] at (9.8,2) {\scriptsize{$\red{6}$}};
\node [below] at (9.75,3.1) {\scriptsize{$1$}};
\node [below] at (10.6,2.65) {\scriptsize{$\red{1}$}};
\node [below] at (10.25,-.15) {\scriptsize{$2$}};
\node [left] at (9.8,-.15) {\scriptsize{$2$}};
\node [below] at (10.25,.9) {\scriptsize{$4$}};
\node [below] at (-11.37,-2.8) {\scriptsize \red{root}};
\node at (0,3.5) {\rule{4mm}{0mm}};
\end{tikzpicture}
$$
\caption{Three different aspects of the mininal non-linear tree $T_0$}
\label{ejemplitos}
\end{figure}
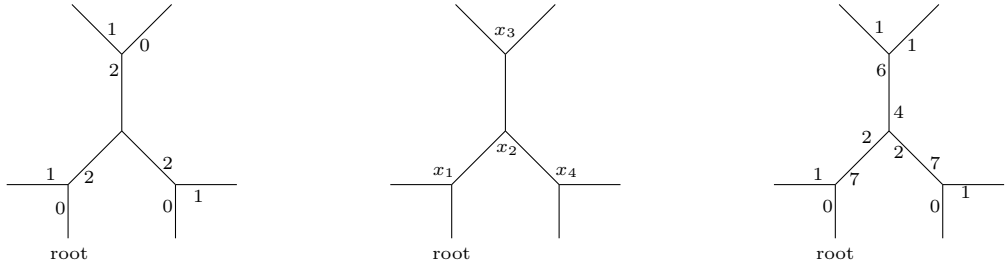

\begin{example}\label{3aspectos}{\em
Figure~\ref{ejemplitos} shows three aspects of the smallest possible non-linear tree $T_0$. The four essential vertices are labelled \red{(following the $T_0$-order)} in the central picture. The fact that the 4-fold product
\begin{equation}\label{strwea}
\langle 0,x_1,\red{(1)},\red{(7)}\rangle\langle 2,x_2,\red{(4)},\red{(2)}\rangle\langle 6,x_3,\red{(1)},\red{(1)}\rangle\langle7,x_4,\red{(1)},\red{(0)}\rangle\in H^4(B_9T_0;R)
\end{equation}
is a basis element follows from Theorem~\ref{maintheorem}, as inspection in the picture on the right of Figure~\ref{ejemplitos}
reveals that the factors in~(\ref{strwea}) interact strongly. \red{Note that $r=s=1$ for each factor in~(\ref{strwea}), and that the cases with a strict inequality in~(\ref{localinteraction}) hold as required in the last clause of item (c) of Definition~\ref{interactioncomplex}.} Likewise, interaction analysis in the picture on the left exhibits the well known fact that $K_4T_0$ is not flag (i.e., $B_4T$ is not a right-angled Artin group): the three basis elements
$\langle0,x_1,\red{(1)},\red{(2)}\rangle$, $\langle2,x_3,\red{(1)},\red{(0)}\rangle$ and $\langle2,x_4,\red{(1)},\red{(0)}\rangle$ in $H^1(B_4T_0;R)$ have pairwise strong interactions (so their three double products are part of a basis of $H^2(B_4T_0)$), \red{but the three basis elements do not interact (so} their triple product vanishes).
}\end{example}

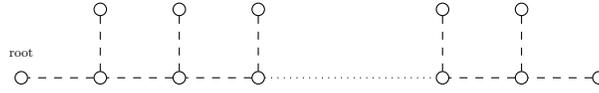
\begin{figure}[h!]
\centering
\begin{tikzpicture}[x=.7cm,y=.7cm,every node/.style={circle, draw, scale=.5}, scale=1.0, rotate = 180, xscale = -1]
		\node[label={\red{root}}] (1) at ( 0, 3.2) {};
		\node (2) at ( 1.5, 3.2) {};
		\node (3) at ( 3, 3.2) {};
		\node (4) at ( 8, 3.2) {};
		\node (6) at ( 4.5, 3.2) {};
		\node (8) at ( 4.5, 1.9) {};
		\node (5) at ( 9.5, 3.2) {};
		\node (7) at ( 1.5, 1.9) {};
		\node (9) at ( 3, 1.9) {};
		\node (14) at ( 8,1.9) {};
		\node (15) at ( 9.5, 1.9) {};
		\node (21) at ( 10.97, 3.2) {};
		\draw[dashed] (2) -- (1);
		\draw[dashed] (3) -- (2);
		\draw[dashed] (6) -- (8);
		\draw[dashed] (6) -- (3);
		\draw[dotted] (6) -- (4);
		\draw[dashed] (5) -- (4);
		\draw[dashed] (7) -- (2);
		\draw[dashed] (3) -- (9);
		\draw[dashed] (14) -- (4);
		\draw[dashed] (15) -- (5);
		\draw[dashed] (21) -- (5);
	\end{tikzpicture}
\caption{A planar embedding of a \red{binary} linear tree}
\label{presentation}
\end{figure}

\begin{example}\label{linearcase}{\em
Let $T$ be a \red{binary} tree \red{whose essential vertices lie along a single embedded arc.} Choosing the planar embedding shown in Figure~\ref{presentation}, we see that $B_nT$ has a right-angled Artin group presentation with generators $\langle k,\red{x},p,q\rangle$, where $\red{x}$ is an essential vertex of $T$ \red{and} $k,p,q$ are non-negative integer \red{numbers}\footnote{\red{Instead of writing the 1-tuples $(p)$ and $(q)$, we have simply written $p$ and $q$.} } \red{satisfying} $p>0$ \red{and} $k+p+q=n-1$. \red{In these terms, $B_nT$ has} a commutativity relation $\langle k,\red{x},p,q\rangle \langle k',\red{x'},p',q'\rangle= \langle k',\red{x'},p',q'\rangle \langle k,\red{x},p,q\rangle$ whenever $\red{x<x'}$ and $q+\red{k'}\red{{}\geq{}} n$, \red{where the former inequality refers to the $T$-order resulting from the embedding. Note that the chosen planar embedding of $T$ rules out weak interactions.}
}\end{example}
 
\begin{theorem}\label{casogeneral}
For any tree $T$, any non-negative integer $n$ and any commutative ring $R$ with unit 1, there is a set-theoretic inclusion $V_nT\hookrightarrow H^1(B_nT;R)$ so that the faces of $K_nT$ yield, via cup-product of their vertices, a graded basis of $H^*(B_nT;R)$. For instance, the empty face $\varnothing\in K_nT$ corresponds to the unit $1\in H^0(B_nT;R)=R$. Furthermore, any product $\langle k,x,p,q\rangle \cdot \langle k',x',p',q'\rangle$ with $x=x'$ vanishes (in particular cup-squares vanish), as do cup-products of non-interacting elements in $V_nT$. 
\end{theorem}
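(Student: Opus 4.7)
The approach is discrete Morse-theoretic throughout: work on the Abrams cubical model $\UDnT$ for $\UF_nT$, equip it with the Farley-Sabalka gradient vector field, and combine Farley's description of the resulting critical cells with Forman's Morse-theoretic cup-product machinery (in its cubical incarnation). The plan has four stages.

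First, I would re-catalog the critical cells of $\UDnT$ in a form matching Definition~\ref{interactioncomplex}. Farley's additive calculation \cite{MR2216709} already identifies critical $m$-cells with certain ``blocked'' configurations; the task is to show that a critical $m$-cell consists precisely of one blocked edge at each of $m$ essential vertices $x_1<\cdots<x_m$, together with $n-m$ stationary particles distributed on the components $C_{0,1}$ and $C_{i,\ell_i}$ of $T\setminus\{x_1,\ldots,x_m\}$. The numerical distribution of these stationary particles recovers the tuples in the 4-tuples $\nu_i=\langle k_i,x_i,p_i,q_i\rangle$, and the blocking conditions translate verbatim into the inequalities~(\ref{localinteraction}) of Definition~\ref{interactioncomplex}(c), together with the strict-inequality requirement at each essential vertex hosting a blocked edge. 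This yields a bijection between critical $m$-cells of $\UDnT$ and $(m-1)$-simplices of $K_nT$, with the unique critical $0$-cell corresponding to~$\varnothing$.

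Second, define the inclusion $V_nT\hookrightarrow H^1(B_nT;R)$ by sending $\nu\in V_nT$ to the cohomology class of the Morse cochain dual to the critical $1$-cell produced by stage one. Additively, the matching from stage one then shows that the family of products $\nu_1\cdots\nu_m$, indexed by $(m-1)$-simplices of $K_nT$, has the right cardinality in each degree to be a graded $R$-basis; what remains is to verify that cup-products realize this matching on the nose.

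Third, and this is the technical core, apply Forman's cubical Morse-theoretic cup-product formula to compute $\nu_1\cup\cdots\cup\nu_m$ for a simplex $\{\nu_1,\ldots,\nu_m\}$ of $K_nT$ with $x_1<\cdots<x_m$. One forms the cubical cup-product at the chain level in $\UDnT$, then runs the Farley-Sabalka gradient flow, tracking the surviving zigzag paths. The strong-interaction conditions~(\ref{localinteraction}) are designed so that exactly one nonzero contribution survives, landing on the critical $m$-cell associated to $\{\nu_1,\ldots,\nu_m\}$; this identifies the cup-product with the corresponding basis element (up to an explicit sign). The weak-interaction case channels the flow onto critical cells of smaller dimension, and this is precisely the content that Theorem~\ref{productsvialowerpaths} is designed to extract; the non-interacting case produces no surviving path at all.

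Finally, the vanishing assertions follow from the same analysis. If $x=x'$, then a cubical $2$-cell built from the two $1$-cells attached to $\nu$ and $\nu'$ has both blocked edges incident to the same essential vertex, which prevents it from being critical after gradient flow, forcing the product to vanish; in particular cup-squares vanish. The identical mechanism handles cup-products of non-interacting elements of $V_nT$: the failure of~(\ref{localinteraction}) obstructs the formation of a critical target cell, so all contributions cancel.

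The main obstacle is the third stage. The Farley-Sabalka flow on the cubical complex $\UDnT$ produces highly non-trivial zigzags, and the cubical cup-product has its own orientation-and-shuffle bookkeeping; demonstrating cleanly that the two combine to single out the expected critical $m$-cell, while controlling all parasitic paths, is where the delicate signed combinatorics and most of the genuine work reside.
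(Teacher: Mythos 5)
Your proposal captures the broad strategy of the paper — combine the Farley–Sabalka gradient field with cubical cup-product formulas on $\DnT$ — but there are two substantive gaps that would have to be repaired.

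The first concerns the claimed ``bijection between critical $m$-cells of $\UDnT$ and $(m-1)$-simplices of $K_nT$'' in stage one. You suggest this falls out by reading off the particle distribution of a critical $m$-cell. That is not the case: a critical $m$-cell is parametrized by data $(k,p_1,q_1,\ldots,p_m,q_m)$ satisfying $k+\sum|p_i|+\sum|q_i|=n-m$, whereas each vertex $\nu_i=\langle k_i,x_i,p_i,q_i\rangle$ of $K_nT$ satisfies $k_i+|p_i|+|q_i|=n-1$, so the $m$-cell parameters are \emph{not} the $\nu_i$-parameters. The correct correspondence sends a simplex $\{\nu_1,\ldots,\nu_m\}$ to the critical cell whose block sizes are the ``interaction parameters'' $\mathcal{R}_0,\mathcal{P}_{i,\ell},\mathcal{Q}_{i,\ell}$ (which are built from \emph{sums} of the $k_j-n$ over bounding vertices of pruned components), and one then needs Lemma~\ref{systemofequations} to prove that this nonlinear correspondence is a bijection: given block sizes $R_0,P_{i,\ell},Q_{i,\ell}$, there is a \emph{unique} solution $(k_i,p_i,q_i)_i$ with the right sum and positivity constraints. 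Without this lemma you have neither injectivity nor surjectivity of your proposed map from simplices to basis elements, so the ``right cardinality'' claim in stage two is unjustified.

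The second gap is in the vanishing mechanisms of stage four. You attribute both vanishing statements to gradient-flow cancellation onto critical cells, but neither works that way in the paper, and neither should. When $x=x'$, the cup product already vanishes at the cubical cochain level: ingredients of a cube in $\DnT$ have pairwise disjoint closures, so the edges at $x$ appearing in the cocycle representatives (Proposition~\ref{underneathphi}) can never combine to a cube in $D_nT$; this is Corollary~\ref{trivialsquares}, an immediate consequence of Proposition~\ref{productsinTn} plus disjointness, not a flow argument. Similarly, when the factors do not interact, the cocycle representing the product (Proposition~\ref{interactionlengths}) is \emph{empty} — some block would be forced to have negative size — so there is no cancellation to track; the vanishing is structural. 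Appealing to zigzag cancellation here is both a misattribution of mechanism and, because cancellations of that sort are exactly the delicate part of the weak-interaction analysis (Theorem~\ref{productsvialowerpaths}), it risks making the argument appear harder or more fragile than it is.
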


The only piece of multiplicative information missing in Theorem~\ref{casogeneral}, namely a description of cup-products of weak-interacting basis elements in $V_nT$, is fully addressed in Section~\ref{lgp} (see Theorem~\ref{productsvialowerpaths}) through the concept of ``interaction parameters'' introduced in Section~\ref{sectioncupproducts} (Definition~\ref{interactionparameters}).

\begin{remark}\label{obstruction}{\em
The only obstructions for realizing $H^*(B_nT;R)$ in Theorem~\ref{casogeneral} as the exterior face ring determined by $K_nT$ are the non-vanishing products whose factors interact weakly. For trees with binary core, such weak-interacting non-trivial products are effectively ruled out in the final section of this paper (Theorem~\ref{binarycore}) by means of a suitable change of basis that adjusts the inclusion $V_nT\hookrightarrow H^1(B_nT;R)$ in Theorem~\ref{casogeneral}.
}\end{remark}


\begin{remark}\label{comparewithSchierer}{\em
The results in this paper allow us to recover and generalize Scheirer's main technical tool \cite[Lemma~3.6]{MR3773741} for studying Farber's topological complexity of $B_nT$. Extensions of Scheirer's results will be the topic of a future publication.
}\end{remark}

In the rest of the paper we shall omit writing the coefficient ring $R$ in cohomology groups and associated (co)chain complexes.

\section{Preliminaries}  
We start by collecting the ingredients and facts we need: cup-products in the cubical setting (\cite{MR2028588,MR3105945}), reviewed in Subsection~\ref{sectionofcubicalproducts}, Forman's discrete Morse theory  (\cite{MR1358614,MR1926850}), reviewed in Subsection~\ref{secndmt}, and Farley-Sabalka's gradient field on Abrams' discrete model for (ordered and unordered) graph configuration spaces (\cite{MR2701024,jorgetereyo,MR2171804,MR2833585}), reviewed in Subsection~\ref{abrfarsabgradient}. This will set the notation we use in the rest of the paper.

\subsection{Cup products in cubical sets}\label{sectionofcubicalproducts}
An elementary cube in $\mathbb{R}^k$ is a cartesian product $c=I_1\times\cdots\times I_k$ of intervals $I_i=[m_i,m_i+\epsilon_i]$, where $m_i\in\mathbb{Z}$ and $\epsilon_i\in\{0,1\}$. For simplicity, we write $[m]:=[m,m]$ for a degenerate interval. \red{We say that $c$ is an $\ell$-cube if there are $\ell$ non-degenerate intervals among the cartesian factors $I_j$ of $c$, say $I_{i_1},\ldots,I_{i_\ell}$ with $1\leq i_1<\cdots<i_\ell\leq k$. In such a case,} the product orientation of $c$ is determined by (a) the orientation (from smaller to larger endpoints) of the non-degenerate intervals $I_{i_1},\ldots,I_{i_\ell}$, and (b) the order $i_1<\cdots<i_\ell$, i.e., the order of factors in the cartesian product. Under these conditions, and for $1\leq r\leq \ell$, set
\begin{equation}\label{productboundaries}\begin{aligned}
\delta_{2r}(c)&=I_1\times\cdots\times I_{i_r-1}\times[m_{i_r}+1]\times I_{i_r+1}\times\cdots\times I_{k}, \\ \delta_{2r-1}(c)&=I_1\times\cdots\times I_{i_r-1}\times[m_{i_r}]\times I_{i_r+1}\times\cdots\times I_{k}.
\end{aligned}\end{equation}
Then, for a cubical set $X\subset\mathbb{R}^k$, i.e., a union of elementary cubes in $\mathbb{R}^k$, the boundary map \red{$\partial\colon C_\ell(X)\to C_{\ell-1}(X)$} in the oriented cubical chain complex $C_*(X)$ is determined by
\begin{equation}\label{productboundary}
\partial\left(c\right)=\sum_{r=1}^\ell(-1)^{r-1}\left(\rule{0mm}{4mm}\delta_{2r}(c)-\delta_{2r-1}(c)\right).
\end{equation}
For instance, the oriented \red{cubical} boundary of the square $[0,1]\times[0,1]$ can be depicted as
$$
\begin{tikzpicture}[x=.5cm,y=.5cm]
\draw[->](0,0)--(3,0);\node[below] at (3,0) {\scriptsize${}+[0,1]\times [0]$};
\draw[->](3,0)--(6,0)--(6,3);\node[right] at (6,3) {\scriptsize${}+[1]\times [0,1]$};
\draw[->](6,3)--(6,6)--(3,6);\node[above] at (3,6) {\scriptsize${}-[0,1]\times [1]$};
\draw[->](3,6)--(0,6)--(0,3);\node[left] at (0,3) {\scriptsize${}-[0]\times [0,1]$};
\draw(0,3)--(0,0);
\node[left] at (0,0) {\scriptsize$(0,0)$};
\node[left] at (0,6) {\scriptsize$(0,1)$};
\node[right] at (6,6) {\scriptsize$(1,1)$};
\node[right] at (6,0) {\scriptsize$(1,0)$};
\end{tikzpicture}
$$

\begin{example}\label{treeasacubicalset}{\em
\red{Let $T$ be a tree whose vertices and edges have been ordered as described in the previous section. Think of $T$ as cubical set. In fact, orient the edges of $T$ from the smaller to the larger endpoints and fix an orientation-preserving embedding $T\subset\mathbb{R}^t$ of cubical sets, where elementary cubes in $\mathbb{R}^t$ have product orientation. \red{Thus, a vertex of $T$ becomes a 0-cube $[k_1]\times\cdots\times[k_t]$ in $\mathbb{R}^t$, while} an oriented edge in $T$ corresponds in $\mathbb{R}^t$ to an oriented 1-cube $I_1\times\cdots\times I_t$, i.e., an elementary cube all but one of its interval factors $I_j$ are degenerate.}
}\end{example}

Cup products in cubical cohomology are fairly similar to their classic simplicial counterparts. At the oriented cubical cochain level, there is a cup product graded map $C^*(X)\times C^*(X)\to C^*(X)$ that is associative, $R$-bilinear and is described on basis elements as follows. Firstly, for intervals $[a,b]$ and $[a',b']$, let
$$[a,b]\cdot[a',b']:=\begin{cases} [a,b'], & \mbox{if $b=a'$ and either $a=b$ or $a'=b'$ (or both);} \\ 0, & \mbox{otherwise.}\end{cases}$$
Then, for elementary cubes $c=I_1\times\cdots\times I_k$ and $d=J_1\times\cdots\times J_k$ in $X$, the cubical cup product $c\cdot d$ of the corresponding basis elements\footnote{We shall omit the use of an asterisk for dual elements. The intended meaning will be clear from the context.} $c,d\in C^*(X)$ vanishes if either $I_i\cdot J_i=0$ for some $i\in\{1,\ldots,k\}$ or, else, if $(I_1\cdot J_1)\times\cdots\times(I_k\cdot J_k)$ is not a cube in $X$; otherwise $c\cdot d$ is up to a sign $\epsilon_{c,d}$, the dual of the cube $(I_1\cdot J_1)\times\cdots\times(I_k\cdot J_k)$. Given our product-orientation settings, the sign is given by the usual algebraic-topology convention:
$$
\epsilon_{c,d}=\sum_{j=1}^{k-1}\left(\dim J_j \sum_{i=j+1}^{k}\dim I_i\right).
$$

\begin{remark}\label{agreeable}{\em
Particularly agreeable is the fact that a finite cartesian product of cubical sets comes equipped for free with the obvious structure of a cubical set. \red{For instance, in the situation of Example~\ref{treeasacubicalset},} the cartesian power $T^n$ is a (product-oriented) cubical set in $\mathbb{R}^{nt}$. In \red{such a} setting, an oriented cube $c=c_1\times\cdots\times c_n$ in $T^n$ (where each $c_i$ is either a vertex or an edge of $T$) corresponds in $\mathbb{R}^{nt}$ to an oriented cube $\left(I_{1,1}\times\cdots\times I_{1,t}\right)\times\cdots\times\left(I_{n,1}\times\cdots\times I_{n,t}\right)$ where, for each $i=1,\ldots,n$, at most one of the intervals $I_{i,1},\ldots,I_{i,t}$ is non-degenerate. These considerations, coupled with the fact that cubes of a single factor $T$ are at most one-dimensional, yield \red{the next} explicit description of cubical cup-products associated to $T$ and $T^n$.}
\end{remark}

\begin{proposition}\label{productsinTn}
The cup product in $C^*(T)$ of the duals of a pair of (oriented) cubes $c$ and $d$ in $T$ is given by the dual of
$$
c\cdot d=\begin{cases} (x,y), & \mbox{if $c=(x,y)$, an edge of $T$, and $d=y$, a vertex of $T$;} \\ (x,y), & \mbox{if $c=x$, a vertex of $T$, and $d=(x,y)$, an edge of $T$;} \\ $\ \ x,$ & \mbox{if $c=d=x$, a vertex of $T$;}\\ \ \ 0,& \mbox{otherwise.}
\end{cases}
$$
More generally, let $D$ be a (product-oriented) cubical subset of $\hspace{.3mm}T^n$. The cup product in $C^*(D)$ of the duals  of a pair of cubes $c=c_1\times\cdots\times c_n$ and $d=d_1\times\cdots\times d_n$ in $D$ vanishes provided $c_i\cdot d_i=0$ for some $i\in\{1,\ldots,n\}$ or, else, provided the cube $c\cdot d:=(c_1\cdot d_1)\times\cdots\times(c_n\cdot d_n)$ is not contained in $D$. Otherwise, the cup product is the multiple $(-1)^{\varepsilon_{c,d}}$ of the dual of $c\cdot d$, where $$\varepsilon_{c,d}=\sum_{j=1}^{n-1}\left(\dim(d_j)\sum_{i=j+1}^n\dim(c_i)\right).$$
\end{proposition}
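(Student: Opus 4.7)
The plan is to derive both assertions by unpacking the interval-wise formulas introduced in this subsection, applied to the cubical embedding $T\subset\mathbb{R}^t$ from Example~\ref{treeasacubicalset} and its cartesian power $T^n\subset\mathbb{R}^{nt}$ from Remark~\ref{agreeable}. The main technical step will be recognizing how the general cubical sign $\epsilon_{c,d}$ simplifies to the cleaner $\varepsilon_{c,d}$ appearing in the statement; the rest is bookkeeping.

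For the first (single-tree) assertion, I would record that under the embedding $T\subset\mathbb{R}^t$ a vertex $x$ of $T$ becomes a 0-cube $[k_1]\times\cdots\times[k_t]$ whose intervals are all degenerate, while an edge $(x,y)$ becomes a 1-cube $I_1\times\cdots\times I_t$ exactly one of whose factors, say $I_j=[m,m+1]$, is non-degenerate, with the remaining degenerate factors agreeing with the corresponding coordinates of both $x$ and $y$. Plugging these into the interval rule $[a,b]\cdot[a',b']$, one sees directly that (a) vertex times vertex is non-zero only when the vertices coincide, in which case the product is the common vertex; (b) edge times vertex is non-zero only when the vertex is the terminal endpoint of the edge (so the unique non-degenerate factor contributes $[m,m{+}1]\cdot[m{+}1]=[m,m{+}1]$), and similarly for vertex times edge; and (c) edge times edge vanishes since no pair of non-degenerate intervals can satisfy the compatibility requirement in $[a,b]\cdot[a',b']$. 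This exhausts the four cases of the formula.

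For the second (product) assertion, the vanishing part is immediate: if $c_i\cdot d_i=0$ in $C^*(T)$ then some interval factor in position $i$ annihilates in $\mathbb{R}^{nt}$, while if the formal product $c\cdot d$ fails to lie in $D$ then the associated cube of $\mathbb{R}^{nt}$ is not in $D$ either. The non-trivial content is the sign. I would observe first that by the edge-edge vanishing from the $T$-case, we may assume at most one of $c_j,d_j$ is an edge in each factor $j$, so $\dim(c_j)\dim(d_j)=0$ for every $j$; this will let diagonal cross-terms drop out.

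The main obstacle, and the only genuine calculation, is then re-indexing the sum $\epsilon_{c,d}=\sum_{a}\dim J_a\sum_{b>a}\dim I_b$ defined over the $nt$ coordinates of $\mathbb{R}^{nt}$ and grouping contributions according to the tree-factor index. Within factor $j$ (resp.\ $i$), at most one interval of $d_j$ (resp.\ $c_i$) is non-degenerate, so the inner sums collapse to $\dim(d_j)$ and $\dim(c_i)$ respectively. Pairs with $j=i$ contribute $\dim(d_j)\dim(c_j)=0$ by the previous paragraph, while pairs with $j<i$ contribute $\dim(d_j)\dim(c_i)$, yielding $\varepsilon_{c,d}=\sum_{j=1}^{n-1}\dim(d_j)\sum_{i=j+1}^n\dim(c_i)$ as claimed. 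The subordination ``$j<i$ in the factor index implies $a<b$ in the coordinate index'' follows from the lexicographic convention with which the cartesian structure of $T^n$ was transferred to $\mathbb{R}^{nt}$, closing the argument.
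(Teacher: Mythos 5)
Your proposal is essentially the argument the paper intends: Proposition~\ref{productsinTn} carries no written proof there, being presented as an immediate unpacking of the interval-wise cup-product rule under the embeddings $T\subset\mathbb{R}^t$ and $T^n\subset\mathbb{R}^{nt}$, exactly as you do. Your re-indexing of the $nt$-coordinate sign $\epsilon_{c,d}$ into the tree-factor sum $\varepsilon_{c,d}$, together with the observation that within each tree factor at most one interval of $c_i$ and at most one of $d_i$ is non-degenerate and that $\dim(c_i)\dim(d_i)=0$, is correct and is the one step that requires a genuine calculation.

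There is, however, one small but real gap in the single-tree case, point~(c). You justify the vanishing of an edge-times-edge product by saying ``no pair of non-degenerate intervals can satisfy the compatibility requirement,'' but this only handles the situation where the two edges have their non-degenerate interval factor in the \emph{same} coordinate slot of $\mathbb{R}^t$. When the two edges occupy \emph{different} slots (e.g.\ two consecutive edges of a path embedded with an orthogonal bend), the interval-wise products can all be non-zero, and the result is a genuine $2$-cube of $\mathbb{R}^t$; the product then vanishes because that $2$-cube is not contained in the $1$-dimensional complex $T$. This is precisely the second ingredient the paper flags in Remark~\ref{agreeable} (``\emph{coupled with the fact that cubes of a single factor $T$ are at most one-dimensional}''), and it should be invoked explicitly. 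Once this is added, your argument is complete.

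Incidentally, the same $1$-dimensionality of $T$ is also what licenses your later reduction ``at most one of $c_i,d_i$ is an edge,'' so making it explicit at the edge-edge step pays off twice.
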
 

\subsection{Discrete Morse theory}\label{secndmt}
Let $X$ denote a finite regular cell complex with face poset $(\mathcal{F},\subset$), i.e., $\mathcal{F}$ is the set of (closed) cells of $X$ partially ordered by inclusion. For a cell $a\in\mathcal{F}$, we write $a^{(p)}$ to indicate that $a$ is $p$-dimensional. We think of the Hasse diagram $H_\mathcal{F}$ of $\mathcal{F}$ as a directed graph: it has vertex set $\mathcal{F}$, while directed edges (called also ``arrows'') are given by the family of ordered pairs $(a^{(p+1)},b^{(p)})$ with $b\subset a$. Such an arrow will be denoted as $a^{(p+1)}\searrow b^{(p)}$. Let $W$ be a partial matching on $H_\mathcal{F}$, i.e., a directed subgraph of $H_\mathcal{F}$ whose vertices have degree \red{precisely}~1. The modified Hasse diagram $H_\mathcal{F}(W)$ is the directed graph obtained from $H_\mathcal{F}$ by reversing all arrows of $W$. A reversed edge is denoted as $b^{(p)}\nearrow a^{(p+1)}$, in which case $a$ is said to be $W$-collapsible and $b$ is said to be $W$-redundant.

Discrete Morse theory focuses on gradient paths, i.e., directed paths in $H_\mathcal{F}(W)$ given by an alternate chain of up-going and down-going arrows,
\begin{equation}\label{path}
a_0\nearrow b_1\searrow a_1\nearrow\cdots\nearrow b_k\searrow a_k \mbox{ \ \ and \ \ } c_0\searrow d_1\nearrow c_1\searrow\cdots\searrow d_k\nearrow c_k.
\end{equation}
\red{A gradient path as the one on the left (right) hand-side of~(\ref{path}) is called an upper (respectively, lower) path,} and the gradient path is called elementary when $k=1$, or constant when $k=0$. The sets of upper and lower paths that start on a $p$-cell $a$ and end on a $p$-cell $b$ are denoted by $\overline{\Gamma}(a,b)$ and $\underline{\Gamma}(a,b)$, respectively. Concatenation of upper/lower paths $\overline{\Gamma}(a,b)\times\overline{\Gamma}(b,c)\to\overline{\Gamma}(a,c)$ and $\underline{\Gamma}(a,b)\times\underline{\Gamma}(b,c)\to\underline{\Gamma}(a,c)$ is defined in the obvious way; for instance, any upper/lower path is a concatenation of corresponding elementary paths. An upper/lower path is called a cycle if $a_0=a_k$ in the upper case of (\ref{path}), or $c_0=c_k$ in the lower case. (By construction, the cycle condition can only hold with $k>1$.) The matching $W$ is said to be a gradient field on~$X$ if $H_\mathcal{F}(W)$ has no cycles. In such a case, cells of $X$ that are neither $W$-redundant nor $W$-collapsible are said to be $W$-critical or, simply, critical when $W$ is clear from the context. We follow Forman's convention to use capital letters to denote critical cells.

It is well known that a gradient field on $X$ carries all the homotopy information of $X$. For our purposes, we only need to recall \red{how} gradient paths recover (co)homological information. In the rest of the section we assume $W$ is a gradient field on~$X$.

Start by fixing an orientation on each cell of $X$ and, for cells $a^{(p)}\subset b^{(p+1)}$, consider the incidence number $\iota_{a,b}$ of $a$ and $b$, i.e., the coefficient ($\pm1$, since $X$ is regular) of $a$ in the expression of $\partial(b)$. Here $\partial$ is the boundary operator in the cellular chain complex $C_*(X)$. The Morse cochain complex $\mathcal{M}^*(X)$ is then defined to be the graded $R$-free\footnote{Cochain coefficients are taken in \red{a ground} ring $R$, \red{as} we are interested in cup-products.} module generated in dimension $p\geq0$ by the duals\footnote{Recall we omit the use of an asterisk for dual elements.} of the oriented critical cells $A^{(p)}$ of $X$. The definition of the Morse coboundary map in $\mathcal{M}^*(X)$ requires the concept of multiplicity of upper/lower paths. In the elementary case, multiplicity is given by
\begin{equation}\label{multiplicitydefinition}
\mu(a_0\nearrow b_1\searrow a_1)=-\iota_{a_0,b_1}\cdot\iota_{a_1,b_1}\mbox{ \ \ and \ \ }
\mu(c_0\searrow d_1\nearrow c_1)=-\iota_{d_1,c_0}\cdot\iota_{d_1,c_1},
\end{equation}
and, in the general case, it is defined to be a multiplicative function with respect to concatenation of elementary paths. The Morse coboundary is then defined by
\begin{equation}\label{morsecoboundary}
\partial(A^{(p)})=\sum_{B^{(p+1)}}\left(\sum_{b^{(p)}\subset B} \left(\iota_{b,B} \sum_{\gamma\in\overline{\Gamma}(b,A)}\mu(\gamma)\right)\right)\cdot B.
\end{equation}
In other words, the Morse theoretic incidence number of $A$ and $B$ is given by the number of \red{``mixed'' gradient} paths \red{$\overline{\gamma}$ from $B$ to $A$ given as the concatenation of an arrow $B\searrow b$ and a path $\gamma\in\overline{\Gamma}(b,A)$, counted with multiplicity $\mu(\overline{\gamma}):=\iota_{b,B}\cdot\mu(\gamma)$}.

Gradient paths \red{yield}, in addition, a \red{homotopy equivalence} between $\mathcal{M}^*(X)$ and the usual cellular cochain complex $C^*(X)$. Indeed, the formul\ae
\begin{equation}\label{quasi-isomorphisms}
\overline{\Phi}(A^{(p)})=\sum_{a^{(p)}}\left(\sum_{\gamma\in\overline{\Gamma}(a,A)}\mu(\gamma)\right)a
\qquad\mbox{and}\qquad
\underline{\Phi}(a^{(p)})=\sum_{A^{(p)}}\left(\sum_{\gamma\in\underline{\Gamma}(A,a)}\mu(\gamma)\right)A
\end{equation}
define (on generators) cochain maps $\overline{\Phi}\colon \mathcal{M}^*(X)\to C^*(X)$ and $\underline{\Phi}\colon C^*(X)\to \mathcal{M}^*(X)$ inducing cohomology isomorphisms $\overline{\Phi}^*$ and $\underline{\Phi}^*$ with $(\underline{\Phi}^*)^{-1}=\overline{\Phi}^*$.

\subsection{Abrams discrete model and Farley-Sabalka's gradient field}\label{abrfarsabgradient}
\red{For a tree $T$, think of $T^n$ as the  cubical set described in Remark~\ref{agreeable}.} Abrams discrete model for $\F_nT$ is the largest cubical subset $\DnT$ of $T^n$ inside $\F_nT$. In other words, $\DnT$ is obtained by removing open cubes from $T^n$ whose closure intersect the fat diagonal. As usual, the symmetric group $\Sigma_n$ acts on the right of $\DnT$ by permuting factors. The action permutes in fact cubes, and the quotient complex is denoted by $\UDnT$. Following Farley-Sabalka's lead, from now on we use the notation $(a_1,\ldots,a_n)$, and even $(a)$, for a cube $a_1\times\cdots\times a_n$ in $T^n$ (so each $a_i$ is either a vertex or an edge of $T$), and the notation $\{a_1,\ldots,a_n\}$, and even $\{a\}$, for the corresponding $\Sigma_n$-orbit. Beware not to confuse the parenthesis notation with a point of $T^n$, or the braces notation with a set of elements of~$T$ ---even if all the $a_i$'s are vertices. The ``coordinates'' $a_i$ in a cube $(a)$ or in its $\Sigma_n$-orbit $\{a\}$ are referred to as the ingredients of the cube. Closures of ingredients \red{of cubes in $D_nT$ and $\UDnT$} are therefore pairwise disjoint.

In his Ph.D.~thesis, Abrams showed that $\DnT$ is a $\Sigma_n$-equivariant strong deformation retract of  $\F_nT$ provided \red{$T$ is $n$-sufficiently subdivided in the sense that} each path in $T$ between distinct vertices of degree not equal to 2 passes through at least $n-1$ edges. \red{Such a condition will be in force throughout the paper, although it} is not a real restriction because $T$ can be subdivided as needed without altering the homeomorphism type of its configuration spaces. The $\Sigma_n$-equivariance of the strong deformation retraction above implies that $\UDnT$ is a strong deformation retract of $\UF_nT$. Consequently, we will switch attention from $\F_nT$ and $\UF_nT$ to their homotopy equivalent discrete models $\DnT$ and $\UDnT$.

For a vertex $x$ of $T$ different from the root $0$, let $e_x$ be the unique edge of $T$ of the form $(y,x)$ ---recall this requires $y<x$. \red{Let $c$ be a cube either in $\DnT$ or $\UDnT$.} A vertex-ingredient $x$ of $c$ is said to be blocked in $c$ if $x=0$ or, else,
if replacing in $c$ the ingredient $x$ by the edge $e_x$ fails to render a cube in the corresponding discrete model; $x$ is said to be unblocked in $c$ otherwise. An edge-ingredient $e$ of a cube $c$ is said to be order-disrespectful in $c$ provided $e$ is of the form $(x,\red{y})$ and \red{there is a vertex ingredient $z$ in $c$ with $x<z<y$ and $z$ adjacent to $x$ (in particular $x$ must be an essential vertex);} $e$ is said to be order-respecting in $c$ otherwise. Blocked vertex-ingredients and order-disrespectful edge ingredients in $c$ are \red{said} to be critical. Farley-Sabalka's gradient field (on $\DnT$ and $\UDnT$) then works as follows. Order the ingredients of a cube $c$ by \red{their} $T$-ordering (as described in Section \ref{mainresult}), and look for non-critical ingredients:
\begin{description}
\item[\emph{(i)}] If the first such ingredient is an unblocked vertex $y$ in $c$, then $c$ is redundant, and one sets $c\nearrow c'$, where $c'$ is the cube obtained from $c$ by replacing $y$ by $e_y$. We say that the pairing $c\nearrow c'$ creates the edge $e_y$. In this case $e_y$ is an order-respecting edge \red{in} $c'$, and all ingredients of $c'$ smaller than $e_y$ are critical.
\item[\emph{(ii)}] If the first such ingredient is an order-respecting edge $(w,z)$ \red{in $c$,} then $c$ is collapsible, and one sets $c''\nearrow c$, where $c''$ is the cube obtained from $c$ by replacing $(w,z)$ by $z$. Again, we say that the edge $(w,z)$ is created by the pairing $c''\nearrow c$. In this case $z$ is an unblocked vertex \red{in} $c''$, and all ingredients of $c''$ smaller than $e_z$ are critical.  
\item[\emph{(iii)}] If all ingredients of $c$ are critical, then $c$ is critical.
\end{description}

\begin{definition}
\red{For a vertex $x$ and a non-negative integer $t$, let $S_x(t)$ stand for the family of vertices $x,x+1,\ldots,x+t-1$. We think of $S_x(t)$ as a size-$t$ stack of vertices supported by $x$. Whenever we use such a stack of vertices, the $n$-sufficiently subdivided condition on $T$ will assure the existence of the required $t$ vertices. Furthermore, for $\ell\in\{0,1,\ldots,d(x)-1\}$, let $x[\ell\hspace{.4mm}]$ denote the vertex adjacent to $x$ that lies in $x$-direction~$\ell$. For instance $x[0]=x-1$ and $x[1]=x+1$, if $x$ is essential.}
\end{definition}

\begin{figure}[h!]
\centering
\begin{tikzpicture}
\node[style={circle, draw, scale=.6}, scale=1.0, xscale = -1] (1) at ( 1.3, 3) {0};
\node[scale=.2] (2) at ( 3.06, 3) {};
\node[scale=.2] (3) at ( 4.53, 3) {};
\node[style={circle, draw, scale=.5}] (4) at ( 6.0, 3) {$x_i$};
\node[style={circle, draw, scale=.6}] (5) at ( 6.6, 2.4) {};
\node[style={circle, draw, scale=.6}] (7) at ( 6.9, 3) {};
\node[style={circle, draw, scale=.6}] (8) at ( 7.8, 3) {};
\node[style={circle, draw, scale=.6}] (9) at ( 8.7, 3) {};
\node[style={circle, draw, scale=.6}] (10) at ( 6.8, 3.4) {};
\node[style={circle, draw, scale=.6}] (11) at ( 7.6, 3.8) {};
\node[style={circle, draw, scale=.6}] (6) at ( 8.4, 4.2) {};
\node[style={circle, draw, scale=.6}] (12) at ( 6.4, 3.9) {};
\node[style={circle, draw, scale=.6}] (13) at ( 2.3, 3 ) {};
\draw[very thin] (13) -- (1); \draw[very thin] (13) -- (2);
\draw[dashed, very thin] (3) -- (2);
\draw[very thin] (4) -- (3);
\draw[very thin] (5) -- (4);
\draw[very thin] (6) -- (11);
\draw[very thick] (7) -- (4);
\draw[very thin] (8) -- (7);
\draw[very thin] (9) -- (8);
\draw[very thin] (10) -- (4);
\draw[very thin] (11) -- (10);
\draw[very thin] (12) -- (4);
\end{tikzpicture}
\caption{\red{Critical ingredients blocked by the root ($k=2$) and by an order-disrespectful edge $(x_i,x_i[3])$ ($r_i=2$, $t_{i,1}=1$, $t_{i,2}=3$, $t_{i,3}=2$ and $t_{i,4}=1$) }}
\label{ing}
\end{figure}
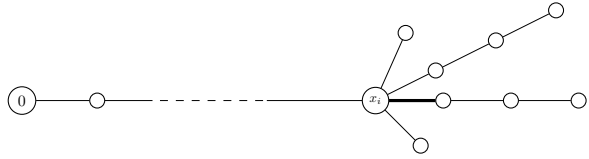

\begin{figure}[h!]
	\centering
	\begin{tikzpicture}[x=.6cm,y=.6cm]
\node[fill=black,style={circle,draw,scale=.4},scale=1.0,rotate = 180, xscale = -1] (1) at ( 3.12, 2.86) {};
\node[fill=black,style={circle, draw, scale=.4},scale=1.0,rotate = 180,xscale = -1] (24) at (3.7, 2.86) {};
\node[fill=black,scale=.1] (2) at ( 6.09, 2.86) {};
\node[fill=black,scale=.1] (3) at ( 7.12, 1.51) {};
\node[fill=black,scale=.1] (4) at ( 7.66, 2.88) {};
\node[fill=black,style={circle,draw,scale=.4},scale=1.0,rotate = 180, xscale = -1] (5) at ( 7.14, 3.58) {};
\node[fill=black,scale=.1] (6) at ( 6, 4.14) {};
\node[fill=black,style={circle,draw,scale=.4},scale=1.0,rotate = 180, xscale = -1] (7) at ( 8.19, 1.12) {};
\node[fill=black,style={circle,draw,scale=.4},scale=1.0, rotate = 180, xscale = -1] (8) at ( 8.69, 0.58) {};
\node[fill=black,style={circle,draw,scale=.4},scale=1.0, rotate = 180, xscale = -1] (9) at ( 8.9, 1.12) {};
\node[fill=black,style={circle,draw,scale=.4},scale=1.0, rotate = 180, xscale = -1] (10) at ( 8.7, 1.66) {};
\node[fill=black,style={circle, draw, scale=.4}, scale=1.0, rotate = 180, xscale = -1] (32) at (9.5,1.12){};
\node[fill=black,scale=.1] (11) at ( 8.66, 2.87) {};
\node[fill=black,scale=.1] (12) at ( 9.25, 2.35) {};
\node[fill=black,scale=.1] (13) at ( 9.26, 3.09) {};
\node[fill=black,scale=.1] (14) at ( 8.21, 4.35) {};
\node[fill=black,scale=.1] (15) at ( 8.92, 4.27) {};
\node[fill=black,scale=.1] (16) at ( 8.84, 4.76) {};
\node[fill=black,scale=.1] (17) at ( 6.79, 4.83) {};
\node[fill=black,scale=.1] (18) at ( 7.47, 5.15) {};
\node[fill=black,scale=.1] (19) at ( 6.98, 5.58) {};
\node[fill=black,scale=.1] (20) at ( 7.46, 1.01) {};
\node[fill=black,scale=.1] (21) at ( 6.95, 1.04) {};
\node[fill=black,scale=.1] (22) at ( 7.91, 2.39) {};
\node[fill=black,style={circle,draw,scale=.4},scale=1.0,rotate = 180,xscale = -1] (23) at ( 7.19, 4.08) {};
\node[fill=black,style={circle,draw,scale=.4},scale=1.0,rotate = 180,xscale = -1] (25) at ( 5.8, 5.14) {};
\node[fill=black,style={circle, draw, scale=.4}, scale=1.0, rotate = 180, xscale = -1] (33) at (5,6.1){};
\node[fill=black,scale=.1] (26) at ( 5.38, 4.7) {};
\node[fill=black,scale=.1] (27) at ( 8.41, 5.03) {};
\node[fill=black,style={circle,draw,scale=.4},scale=1.0,rotate = 180,xscale = -1] (28) at ( 7.67, 3.52) {};
\node[fill=black,style={circle,draw,scale=.4},scale=1.0,rotate = 180,xscale = -1] (29) at ( 6.14, 5.71) {};
\node[fill=black,style={circle,draw,scale=.4},scale=1.0,rotate = 180,xscale = -1] (30) at ( 5.33, 5.73) {};
\node[fill=black,scale=.1] (31) at ( 7.67, 1.71) {};
\node[fill=black,style={circle, draw, scale=.4}, scale=1.0, rotate = 180, xscale = -1] (34) at (8.2,3.46){};
\draw[very thin] (28) -- (34);
\draw[very thin] (33) -- (30);
\draw[very thin] (1)node[above]{\scriptsize 0} -- (24);
\draw[very thin] (24) -- (2);
\draw[very thin] (32) -- (9);
\draw[very thin] (3) -- (2);
\draw[very thin] (4) -- (2);
\draw[very thin] (5) -- (2);
\draw[very thin] (6) -- (2);
\draw[very thin] (7) -- (3);
\draw[very thin] (8) -- (7);
\draw[very thick] (7)node[below]{\scriptsize $x_3$} -- (9);
\draw[very thin] (7) -- (10);
\draw[very thin] (11) -- (4);
\draw[very thin] (13) -- (11);
\draw[very thin] (12) -- (11);
\draw[very thin] (14) -- (5);
\draw[very thin] (15) -- (14);
\draw[very thin] (16) -- (14);
\draw[very thin] (17) -- (6);
\draw[very thin] (19) -- (17);
\draw[very thin] (18) -- (17);
\draw[very thin] (3) -- (21);
\draw[very thin] (20) -- (3);
\draw[very thin] (22) -- (4);
\draw[very thin] (23) -- (5);
\draw[very thin] (6) -- (25);
\draw[very thin] (26) -- (6);
\draw[very thin] (27) -- (14);
\draw[very thick] (28) -- (5)node[below]{\scriptsize $x_2$};
\draw[very thick] (29) -- (25)node[right]{\scriptsize $x_1$};
\draw[very thin] (25) -- (30);
\draw[very thin] (31) -- (3);
\end{tikzpicture}
\caption{\red{A} critical \red{3}-cell $\{2 \l x_1,\red{(2),(0)} \l x_2,\red{(1,0),(1)} \l x_3,\red{(1),(1,1)}\}$}
\label{acriticalcube}
\end{figure}
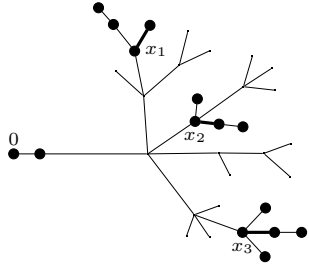

\red{As illustrated in Figures~\ref{ing} and~\ref{acriticalcube}, ingredients of a critical $m$-cube are spelled out through}
\begin{itemize}
\item[{\em (a)}] \red{a stack $S_0(k)$} of $k$ vertices \red{supported by the root (here $k\geq0$, i.e., $S_0(k)$ can be empty);}
\item[{\em (b)}] $m$ pairwise different essential vertices $x_1,\ldots,x_m$ of $T$ and, \red{for each $i=1,2,\ldots,m$, an order-disrespectful} edge $(x_i,\red{x_i[r_i+1]})$ \red{with $1\leq r_i< d(x_i)-1$;}
\item[{\em (c)}] for each $i=1,2,\ldots,m$ \red{and each $\ell=1,2,\ldots,d(x_i)-1$, a stack $S_{i,\ell}=S_{y_{i,\ell}}(t_{i,\ell})$ of $t_{i,\ell}$ vertices supported by the vertex}
$$\red{
y_{i,\ell}:=\begin{cases}
 x_i[\ell\hspace{.4mm}], & \mbox{if $\ell\neq r_i+1;$} \\
 x_i[\ell\hspace{.2mm}]+1, & \mbox{if $\ell=r_i+1$,}
\end{cases}
}$$
\end{itemize}
\red{subject to the requirements}
\begin{itemize}
\item[{\em (d)}] \red{some stacks $S_{i,\ell}$ might be empty, i.e., $t_{i,\ell}\geq0$ for all $i$ and $\ell$. Yet, for each $i$, there must exist an $\ell\in\{1,2,\ldots, r_i\}$ with $t_{i,\ell}>0$ (recall that $(x_i,x_i[r_i+1])$ is order-disrespectful);}
\item[{\em (e)}] \red{$k+m+\sum_{i,\ell}t_{i,\ell}=n$, i.e., the total number of ingredients is $n$.}
\end{itemize}

The critical cube \red{in} the unordered discrete model $\UDnT$ determined by the above information will be denoted as
\begin{equation}\label{criticalrepresentatives}
\left\{\rule{0mm}{4mm}k \l x_1,p_1,q_{1} \l \cdots \l x_m, p_m,q_m \right\}
\end{equation}
\red{where} $p_i=\red{(t_{i,1},\dots, t_{i,r_i})}$ and $q_i =\red{(t_{i,r_i+1}, \dots , t_{i,d(x_i)-1})}$.
Vertical bars are meant to stress the fact that each pair of parameters $p_{i}$ and $q_{i}$ are ordered and attached to $x_i$. Other than that,~\red{(\ref{criticalrepresentatives})} is indeed a set formed by the triples $(x_i,p_{i},q_{i})$ and the singleton~$k$. Figure \ref{acriticalcube} illustrates a \red{typical} critical cube.

\begin{remark}\label{farleysabalkapairsevenfaces}{\em
In any arrow $\red{d}\nearrow \red{c}$ of Farley-Sabalka's modified Hasse diagram, $\red{d}$ is an even face of~$\red{c}$, i.e., in the notation of (\ref{productboundaries}), $\red{d}=\delta_{2r}(\red{c})$ for some $r\in\{1,2,\ldots,\dim(\red{c})\}$.
}\end{remark}

\begin{remark}\label{equivariantFS}{\em
By construction, Farley-Sabalka's gradient field in $\DnT$ is $\Sigma_n$-equivariant and, by passing to the quotient, it yields the corresponding gradient field in $\UDnT$. Consequently, gradient paths can equivalently be analyzed in either the ordered or unordered settings. Indeed, a gradient path in $\UDnT$ corresponds to a ``$\Sigma_n$-orbit'' of gradient paths in $\DnT$. \red{Due to the cup-product descriptions in Subsection~\ref{sectionofcubicalproducts}, we find it more} convenient to perform the gradient-path analysis \red{at the level of the cubical set} $\DnT$.
}\end{remark}

\section{Gradient-path \red{dynamics}}\label{gradientpaths}
Recall from Subsection~\ref{sectionofcubicalproducts} that the product orientation of a $p$-dimensional cube $(c_1,\ldots,c_n)$ in $\DnT$ depends on (the orientation of edges ---from the smaller to the larger vertex--- in $T$ and on) the coordinate order $c_{i_1},\ldots,c_{i_p}$, i.e.~where $i_1<\cdots<i_p$, of the edge-ingredients. In particular, the quotient cube $\{c_1,\ldots,c_n\}$ in $\UDnT$ inherits no well defined orientation. The following definition avoids the problem and is well suited for the analysis of gradient paths in $\DnT$.

\begin{definition}[Gradient orientation, \red{cf.~Subsection 2.3 of \cite{MR2359035}}]
The listing \red{$(x_1,y_1),\ldots,(x_p,y_p)$} of edge-ingredients of a \red{$p$-}cube $c$ in $\DnT$ or in $\UDnT$ is said to be in gradient order if $x_1<\cdots<x_p$, where the latter is the $T$-ordering \red{of vertices} discussed in Section \ref{mainresult}. The gradient orientation of $c$ is defined just as the product orientation, except that the gradient order of the edge-ingredients is used \red{(rather than the coordinate order).}
\end{definition}

\red{In the rest of the paper,} and unless explicitly noted otherwise, we use gradient orientations. In doing so, the definitions of the cubes $\delta_{2r}(c)$ and $\delta_{2r-1}(c)$ in (\ref{productboundaries}) require a corresponding adjustment. Namely, if the edge-ingredients of a $p$-cube $c$ are listed in gradient order as $(x_1,y_1),\ldots,(x_p,y_p)$, then replacing the edge $(x_r,y_r)$ by the vertex $y_r$ or $x_r$ yields $\delta_{2r}(c)$ or $\delta_{2r-1}(c)$, respectively. Remark~\ref{farleysabalkapairsevenfaces} and the expression in (\ref{productboundary}) for cubical boundaries then remain unaltered. A first advantage of gradient orientations is that the map induced at the cochain level by the projection $\pi\colon\DnT\to\UDnT$ \red{involves no signs,}
\begin{equation}\label{simpleform}
\pi^*(\{c\})=\sum_{\sigma\in \Sigma_n}(c)\cdot\sigma.
\end{equation}
(Recall we omit asterisks for duals.) \red{In view of} Remark~\ref{equivariantFS}, \red{the homotopy equivalences in~(\ref{quasi-isomorphisms}) satisfy:}

\begin{lemma}\label{keydiagram}
The following diagram is commutative:
$$\xymatrix{
\mathcal{M}^*(\DnT)\ar[r] \ar[r]^{\overline{\Phi}} & C^*(\DnT)\ar[r] \ar[r]^{\underline{\Phi}} & \mathcal{M}^*(\DnT) \\
\mathcal{M}^*(\UDnT)\ar[r] \ar[u]^{\pi^*} \ar[r]_{\overline{\Phi}} & C^*(\UDnT)\ar[r] \ar[u]^{\pi^*} \ar[r]_{\underline{\Phi}} & \mathcal{M}^*(\UDnT). \ar[u]^{\pi^*} 
}$$
\end{lemma}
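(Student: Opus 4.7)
The proof plan rests on exploiting the $\Sigma_n$-equivariance of Farley-Sabalka's gradient field (Remark~\ref{equivariantFS}), together with the sign-free formula~(\ref{simpleform}), to reduce both squares to bookkeeping on gradient paths.

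First, I will record three supporting observations. \emph{(i)} The $\Sigma_n$-action on cubes of $\DnT$ is free: the ingredients of any cube of $\DnT$ are pairwise disjoint, so no nontrivial permutation fixes a cube. Consequently, each cube $\{c\}$ of $\UDnT$ has exactly $n!$ lifts in $\DnT$, and (\ref{simpleform}) records $\pi^*(\{c\})$ as the unsigned sum of these lifts. \emph{(ii)} Gradient orientations are $\Sigma_n$-invariant, since they depend only on the $T$-order of the edge-ingredients, which is intrinsic to the ingredients and thus unaffected by coordinate permutation. In particular, the cellular incidence numbers $\iota_{a,b}$ (computed with gradient orientations via~(\ref{productboundary})) satisfy $\iota_{a\cdot\sigma,b\cdot\sigma}=\iota_{a,b}$ for every $\sigma\in\Sigma_n$. \emph{(iii)} Combining (ii) with equivariance of the matching, the map $\gamma\mapsto\gamma\cdot\sigma$ gives bijections $\overline{\Gamma}(a,b)\leftrightarrow\overline{\Gamma}(a\cdot\sigma,b\cdot\sigma)$ and $\underline{\Gamma}(a,b)\leftrightarrow\underline{\Gamma}(a\cdot\sigma,b\cdot\sigma)$ preserving multiplicity~(\ref{multiplicitydefinition}).

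Next, I will record the lifting bijection for gradient paths. Fix a cube $\{A\}$ in $\UDnT$ and a lift $(A)$ in $\DnT$; an upper gradient path $\{\gamma\}\in\overline{\Gamma}(\{a\},\{A\})$ lifts uniquely to a path ending at $(A)$ (each step is forced by the matching), yielding an element $\gamma\in\overline{\Gamma}(b,(A))$ for some specific lift $b$ of $\{a\}$. Conversely, any $\gamma\in\overline{\Gamma}(b,(A))$ projects to an element of $\overline{\Gamma}(\{a\},\{A\})$. Together with (iii), this gives
\[
\sum_{\{\gamma\}\in\overline{\Gamma}(\{a\},\{A\})}\mu(\{\gamma\})
=\sum_{b\text{ lift of }\{a\}}\sum_{\gamma\in\overline{\Gamma}(b,(A))}\mu(\gamma),
\]
where $\mu(\{\gamma\}):=\mu(\gamma)$ for any (hence every) lift. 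An analogous identity holds for lower paths.

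Finally, commutativity of the left square is a direct computation: writing $\pi^*(\{A\})=\sum_\tau (A)\cdot\tau$, applying $\overline{\Phi}$ term by term, then using (iii) to rewrite each sum over $\overline{\Gamma}(b,(A)\cdot\tau)$ as one over $\overline{\Gamma}(b\cdot\tau^{-1},(A))$, and finally regrouping the resulting $\Sigma_n$-orbits via~(\ref{simpleform}), matches the expansion of $\pi^*\overline{\Phi}(\{A\})$. The right square follows from the dual calculation using~$\underline{\Phi}$. The main obstacle is observation (ii): with \emph{product} orientations the action of $\Sigma_n$ would introduce sign discrepancies that would propagate through (\ref{morsecoboundary}) and~(\ref{quasi-isomorphisms}), breaking the unsigned formula~(\ref{simpleform}); this is precisely the technical justification for having switched to gradient orientations in Section~\ref{gradientpaths}.
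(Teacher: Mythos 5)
Your argument is correct, and it takes essentially the same route as the paper: the paper supplies no separate proof for this lemma, simply stating it ``in view of Remark~\ref{equivariantFS}'' (the $\Sigma_n$-equivariance of the Farley--Sabalka field) together with the sign-free expression~(\ref{simpleform}) that results from using gradient orientations. Your observations (i)--(iii), the lifting bijection with a fixed endpoint, and the final substitution $\overline{\Gamma}(b,(A)\cdot\tau)\leftrightarrow\overline{\Gamma}(b\cdot\tau^{-1},(A))$ are exactly the bookkeeping that the paper leaves implicit, and your closing remark correctly identifies why switching to gradient orientations (making the incidence numbers $\iota_{a,b}$ genuinely $\Sigma_n$-invariant) is the enabling technical point.
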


\begin{remark}\label{extensiondenotacion}{\em
The Morse differential in $\UDnT$ is trivial (see \cite{MR2216709} or \red{Proposition \ref{trivialmorsediff}} below). Therefore, for each $m\geq0$, a graded basis of $H^m(\UDnT)$ is given by the cohomology classes of the $\overline{\Phi}$-images of the duals of the critical cubes (\ref{criticalrepresentatives}). By abuse of notation\footnote{The context clarifies the meaning.}, the $\pi^*$-image\footnote{We prefer to compute products in the ordered setting in view of the explicit descriptions in Subsection \ref{sectionofcubicalproducts}.} of the cohomology class so determined will also be denoted by the corresponding expression~(\ref{criticalrepresentatives}). There is no loss of information because vertical maps in the previous diagram are injective and, more importantly, they induce injections in cohomology (the latter assertion follows from a standard transfer argument and the torsion-freeness of $H^*(\UDnT)$).
}\end{remark}

This section's goal is the description of a cocycle in $C^*(\DnT)$ that represents a given cohomology class $\{k\l x,p,q\}\in\image$ (Proposition \ref{underneathphi} below). This requires the following discussion of dynamics \red{for} upper-paths \red{that end at} critical cubes.

\begin{definition}\label{eorsor}
An edge-ingredient $(x,y)$ of a cube $c$ of $\DnT$ is said to be 
\begin{itemize}
\item edge order-respecting in $c$, written as ``$\hspace{.2mm}(x,y)$ is $\hspace{.2mm}\eor(c)$'', if there are no edge-ingredients $(a,b)$ in $c$ with $x<a<b<y$.
\item strongly order-respecting in $c$, written as ``$\hspace{.2mm}(x,y)$ is $\hspace{.2mm}\sor(c)$'', if $(x,y)$ is $\eor(c)$ and there is no vertex-ingredient $v$ in $c$ with $x<v<y$.
\end{itemize}
A Farley-Sabalka pairing $\delta_{2i}(c)\nearrow c$ that creates an edge-ingredient that is $\sor(c)$ is said to be of sor type; otherwise, it is said to be of branch type. \red{Likewise, $\delta_{2i}(c)\nearrow c$ is said to be of eor type if the edge-ingredient it creates is $\eor(c)$.} An \red{upper}  elementary path $\delta_{2i}(c)\nearrow c\searrow\delta_j(c)$ is said to be \red{of falling-vertex type (sor type, branch type, respectively) provided  $j=2i-1$ (\hspace{.4mm}$\delta_{2i}(c)\nearrow c$ is of sor type, $\delta_{2i}(c)\nearrow c$ is of branch type, respectively).}
\end{definition}

Note that, if $y$ is the vertex-ingredient in $\delta_{2i}(c)$ that is responsible for a pairing $\delta_{2i}(c)\nearrow c$, say creating the edge-ingredient $(x,y)$ of $c$, then $\delta_{2i-1}(c)$ is obtained from $\delta_{2i}(c)$ by replacing the vertex $y$ by $x$. In other words, in the falling-vertex type path $\delta_{2i}(c)\nearrow c\searrow\delta_{2i-1}(c)$, the vertex-ingredient $y$ \red{``falls''} to its predecesor $x$. \red{In particular,} elementary paths of falling-vertex type have multiplicity 1.

\begin{examples}\label{elementarypropertiesofeorandsor}{\em
Any edge-ingredient $(x,x+1)$ of $c$ is $\sor(c)$. \red{On the other hand, for an essential vertex $x$ and a positive direction $\ell\in\{1,2,\ldots,d(x)-1\}$ from $x$,} an edge-ingredient $(x,\red{x[\ell\hspace{.3mm}]})$ of $c$ is $\sor(c)$ if and only if $c$ has no ingredient, neither vertex nor edge, in \red{any of} the component\red{s} of $T\setminus\{x\}$ \red{lying in} $x$-direction\red{s} \red{$1,2,\ldots,\ell-1$.} Furthermore, if $(x,y)$ is an edge-ingredient of a face $\delta_j(c)$ of some cube $c$ of $\DnT$, then $(x,y)$ is $\sor(\delta_j(c))$ if and only if $(x,y)$ is $\sor(c)$. 
}\end{examples}

\red{The final observation in Examples~\ref{elementarypropertiesofeorandsor} is freely used in the proof of:}
\begin{proposition}\label{eorandsor}
\red{Let $(x_1,y_1),\ldots,(x_p,y_p)$ be the gradient-order listing of the edge-ingredients of a $p$-cube~$c$ in $\DnT$.}
\begin{enumerate}
\item If an arrow $\delta_{2i}(c)\nearrow c$ in the modified Hasse diagram for $\DnT$ \red{is of eor type,} then \red{$(x_i,y_i)$ is $\red{\sor(c)}$ and,} for any $k>2i$, $\delta_k(c)$ is collapsible.
\item If the edge $(x_i,y_i)$ is $\sor(c)$, then there is no upper path starting at a face $\delta_{j}(c)$ with $j<2i-1$ and ending \red{at} a critical cube.
\end{enumerate}
\end{proposition}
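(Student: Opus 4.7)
Both items hinge on the interplay between the depth-first $T$-order and the $\eor/\sor$ hypothesis on $(x_i,y_i)$. The key structural fact I will use repeatedly is that the vertices of $T$ of ordinal in $(x_i,y_i)$ are exactly those of the subtrees of $T$ rooted at $x_i[\ell]$ with $1\leq\ell\leq\ell_i-1$, where $y_i=x_i[\ell_i]$; together with pairwise-disjoint closures of $c$'s ingredients, $\eor(c)$ then forces every edge-ingredient of $c$ other than $(x_i,y_i)$ to have no endpoint inside $(x_i,y_i)$.

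\textbf{Item (1).} For the $\sor$ upgrade I argue by contradiction, taking $v$ to be a vertex-ingredient of $c$ of minimal ordinal with $x_i<v<y_i$. By the structural fact, the parent of $v$ in $T$ is either $x_i$ itself or a smaller vertex of the same subtree that is not an ingredient of $c$; so in $\delta_{2i}(c)$, where the closure at position $i$ has shrunk to $\{y_i\}$, this parent lies in the closure of no ingredient, leaving $v$ unblocked---which contradicts that $y_i$ must be the first non-critical ingredient of $\delta_{2i}(c)$ for the Farley--Sabalka pairing $\delta_{2i}(c)\nearrow c$ to exist. For the collapsibility of $\delta_k(c)$ with $k>2i$, set $r=\lceil k/2\rceil>i$. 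Examples~\ref{elementarypropertiesofeorandsor} transfers $\sor$ to $(x_i,y_i)$ as an edge-ingredient of $\delta_k(c)$; the structural fact plus $\sor(c)$ yields $x_r,y_r>y_i$, so the ingredients of $\delta_k(c)$ of ordinal $<y_i$ coincide with those of $c$ at positions outside $\{i,r\}$. A direct check of blockedness and of order-disrespectfulness (low-ordinal parents of vertex-ingredients and witness vertex-ingredients for order-disrespectful edges are unaffected by modifications at positions $i,r$, which only touch ordinals $\geq y_i$) shows these ingredients remain critical. Thus $(x_i,y_i)$ is the first non-critical ingredient of $\delta_k(c)$, and $\delta_k(c)$ is collapsible.

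\textbf{Item (2).} I argue by contradiction on a minimum-length upper path $\gamma\colon\delta_j(c)\nearrow b_1\searrow a_1\nearrow\cdots\searrow A$ ending at a critical cube. Since $(x_i,y_i)$ is order-respecting in $\delta_j(c)$, the structural fact forces the first non-critical ingredient $v^*$ of $\delta_j(c)$ to be a vertex of ordinal $<x_i$; consequently the created edge $e_{v^*}$ in $b_1$ has low ordinal and $(x_i,y_i)$ persists in $b_1$ as an $\sor$ edge-ingredient. I then case-split on the face step $b_1\searrow a_1$: if $a_1$ replaces $(x_i,y_i)$, the argument of item~(1) transplants to show $e_{v^*}$ is the first non-critical ingredient of $a_1$ and is still order-respecting, so $a_1$ is collapsible; since $a_1$ is then not redundant, $\gamma$ must terminate at the non-critical $a_1$, a contradiction. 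If $a_1$ modifies any other ingredient, the invariant ``$(x_i,y_i)$ $\sor$ at position $i$ plus a low-ordinal disturbance'' propagates, and the tail $a_1\nearrow\cdots\searrow A$ is a strictly shorter upper path of the form $\delta_{j'}(c')$ with $(x_{i'},y_{i'})$ $\sor(c')$ and $j'<2i'-1$ (taking $c'=b_1$ and choosing $(x_{i'},y_{i'})$ to be $(x_i,y_i)$ or $e_{v^*}$ according to whether the position modified by $a_1$ lies above or below that of $e_{v^*}$ in $b_1$'s gradient), which the inductive hypothesis rules out. The hardest step will be verifying the $\sor$-status propagation through every $\searrow$ option; it again reduces to the structural fact applied near the newly-relevant edge.
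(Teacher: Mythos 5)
Your item (1) is essentially the paper's own argument, written out with more explicit detail; it is correct. The paper argues tersely that all ingredients of $\delta_{2i}(c)$ smaller than $y_i$ being critical rules out vertex-ingredients in $(x_i,y_i)$, and you supply the missing step (taking $v$ minimal and observing its parent cannot lie in any ingredient's closure once the closure at position $i$ shrinks to $\{y_i\}$), together with a correct explicit check that criticality of low-ordinal ingredients is preserved under passage from $c$ to $\delta_k(c)$ with $k>2i$.

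For item (2) you take a genuinely different route --- an induction on the length of the path, with a case split on the first descent $b_1\searrow a_1$ --- whereas the paper tracks the single edge $(x_i,y_i)$ along the entire path until the first index $k$ at which it disappears, and derives one contradiction there by comparing the minimal unblocked vertices of $c_{k-1}$ and $c_k$. As written, your inductive reduction has a gap. Let $i_1$ and $i'_1$ be the gradient positions in $b_1$ of $(x_i,y_i)$ and of $e_{v^*}$ respectively (so $i'_1<i_1$). In your second sub-case you assert that the tail $a_1\nearrow\cdots\searrow A$ again has the form $\delta_{j'}(c')$ with a suitable $\sor$ pivot at some position $i'$ of $c'=b_1$ and $j'<2i'-1$. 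This fails whenever $a_1=\delta_{j'}(b_1)$ removes an edge whose gradient position $r_1$ in $b_1$ exceeds $i_1$ (which happens as soon as $c$ has edge-ingredients beyond $(x_i,y_i)$): then $j'\geq 2r_1-1>2i_1-1\geq 2i'_1-1$, so for neither of your proposed pivots does $j'<2i'-1$ hold, and the inductive hypothesis does not apply. This missing case can be closed by the same direct argument you use when $a_1$ removes $(x_i,y_i)$: $\sor(b_1)$ of $(x_i,y_i)$ forces the removed edge to have $x$-value $>y_i$, so the ingredients of $a_1$ of ordinal $<v^*$ coincide with those of $b_1$, remain critical, and $e_{v^*}$ remains order-respecting; hence $a_1$ is collapsible and the path halts at the non-critical $a_1$. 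A secondary caveat: the edge $e_{v^*}$ you select as pivot when $r_1<i'_1$ need not be $\sor(b_1)$ --- the Farley--Sabalka rule only guarantees it is order-respecting, and an $\eor$-violating edge inside $(\operatorname{parent}(v^*),v^*)$ can coexist; that sub-case is rescued by simply using $(x_i,y_i)$ as the pivot for all $r_1<i_1$. Your closing sentence already flags the unverified ``$\sor$-status propagation,'' and indeed that is exactly where the argument is incomplete. The paper's route avoids this case analysis altogether by deferring any local step analysis until the moment $(x_i,y_i)$ is actually dropped.
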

\begin{proof}
1. \red{By definition,} $\delta_{2i}(c)\nearrow c$ creates the edge-ingredient \red{$(x_i,y_i)$, which is assumed to be} $\eor(c)$. Since ingredients of $\delta_{2i}(c)$ smaller than \red{$y_i$} are critical, \red{$(x_i,y_i)$} is in fact $\sor(c)$. Thus, for $k\neq2i,2i-1$, \red{$(x_i,y_i)$} is $\sor(\delta_k(x))$ and, therefore, order-respecting in $\delta_k(x)$. On the other hand, for $k>2i$, $\delta_k(c)$ and $c$ have the same ingredients smaller than \red{$y_i$,} so that all ingredients in $\delta_k(c)$ smaller than \red{$(x_i,y_i)$} are critical. Thus, by definition, $\delta_k(x)$ is collapsible for $k>2i$.

\smallskip
2. Under the stated hypothesis, assume (for a contradiction) there is a gradient path
\begin{equation}\label{thepath}
c\searrow\delta_j(c)=:c_0\nearrow d_1\searrow c_1\nearrow\cdots\nearrow d_m\searrow c_m
\end{equation}
with $j<2i-1$, $m\geq0$ and $c_m$ critical. Then $(x_i,y_i)$ is $\sor(c_0)$ and, in particular, $(x_i,y_i)$ is order-respecting in $c_0$, which forces $m>0$. Recursively, if $(x_i,y_i)$ is an edge-ingredient of both $c_{\ell-1}$ and $c_\ell$ (and so of $d_\ell$), and $(x_i,y_i)$ is $\sor(c_{\ell-1})$, then  $(x_i,y_i)$ is forced to be ($\sor(d_\ell)$ and, thus,) $\sor(c_\ell)$. It is not possible that $(x_i,y_i)$ is an edge-ingredient of all the $c_\ell$'s, for then $(x_i,y_i)$ would be $\sor(c_m)$, which is impossible as $c_m$ is critical. Let $k$ be the first integer ($1\leq k\leq m$) for which $(x_i,y_i)$ is not an ingredient of $c_k$ ---so that $(x_i,y_i)$ is $\sor(c_\ell)$ for $0\leq\ell<k$. In particular, $(x_i,y_i)$ is order-respecting in $c_{k-1}$. \red{Thus,} the vertex-ingredient $v$ of $c_{k-1}$ responsible for the pairing $c_{k-1}\nearrow d_k$ in (\ref{thepath}) satisfies $v<y_i$ and, in fact, $v<x_i$, since $(x_i,y_i)$ is $\sor(c_{k-1})$. On the other hand, since the edge $(u,v)$ created by $c_{k-1}\nearrow d_k$ is order-respecting in $d_k$, and since $c_k$ is obtained from $d_k$ by replacing the edge $(x_i,y_i)$ by either $x_i$ or $y_i$, the inequalities $u<v<x_i<y_i$ yield that
\begin{equation}\label{ortoo}
\mbox{$(u,v)$ is order-respecting in $c_k$ too.}
\end{equation}
In particular, $c_k$ is not critical, so $k<m$. Let $w$ be the vertex-ingredient of $c_k$ responsable for the pairing $c_k\nearrow d_{k+1}$. By (\ref{ortoo}), we get the first inequality in $w<v<x_i<y_i$, so
\begin{itemize}
\item ($w$ is an ingredient of $c_k$) $\Rightarrow$ ($w$ is an ingredient of $d_k$ and, therefore, of $c_{k-1}$);
\item ($w$ is unblocked in $c_k$) $\Rightarrow$ ($w$ is unblocked in $d_k$ and, therefore, in $c_{k-1}$).
\end{itemize}
But, by definition, $v$ is the minimal unblocked vertex in $c_{k-1}$, so $v\leq w$, a contradiction.
\end{proof}

Proposition \ref{eorandsor} implies that upper paths ending at critical cubes have a forced behavior most of the time:
\begin{corollary}\label{dynamics}
Let $\gamma$ be an upper path in $\DnT$ that ends at a critical cube. Any \red{upper} elementary factor of $\gamma$ of sor type is of falling-vertex type.
\end{corollary}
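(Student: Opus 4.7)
I would proceed by contradiction through a short case analysis on the descending index. Consider an elementary factor $\delta_{2i}(c) \nearrow c \searrow \delta_j(c)$ of $\gamma$ that is of sor type; by Definition~\ref{eorsor} the pairing creates an edge-ingredient, which can only be the $i$-th edge $(x_i, y_i)$ of $c$ in gradient order, and this edge is $\sor(c)$. Assuming the factor is not of falling-vertex type means $j \neq 2i - 1$, which leaves three possibilities to refute: $j > 2i$, $j < 2i - 1$, and $j = 2i$.

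The case $j > 2i$ would be dispatched by part~1 of Proposition~\ref{eorandsor}: since the pairing is of sor (a fortiori, eor) type, every face $\delta_j(c)$ with $j > 2i$ is $W$-collapsible. But in an upper gradient path each $a$-position must be either $W$-redundant (so the next up-arrow exists) or the terminal and $W$-critical cell, and a collapsible cell is neither. Hence $\gamma$ cannot actually traverse $\delta_j(c)$ on its way to a critical cube.

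For $j < 2i - 1$, I would apply part~2 of Proposition~\ref{eorandsor} directly: no upper path starting at $\delta_j(c)$ ends at a critical cube, while the tail of $\gamma$ beginning at $\delta_j(c)$ is precisely such a path. The remaining possibility $j = 2i$ reduces the elementary factor to the loop $\delta_{2i}(c) \nearrow c \searrow \delta_{2i}(c)$; since $\delta_{2i}(c)$ is redundant with unique Farley--Sabalka partner $c$, prolonging $\gamma$ past $\delta_{2i}(c)$ forces the next up-arrow back to $c$, producing a cycle and contradicting the gradient-field property (and $\delta_{2i}(c)$ being redundant rules out termination there). With all three alternatives eliminated, $j = 2i - 1$, which is the falling-vertex condition. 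The main conceptual step is the correct identification of $(x_i, y_i)$ as the sor edge created by the pairing and the invocation of the two halves of Proposition~\ref{eorandsor}; no delicate gradient-path bookkeeping is needed beyond that.
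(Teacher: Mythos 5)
Your proposal is essentially correct and follows the approach the paper clearly intends (the paper states the corollary immediately after Proposition~\ref{eorandsor} without giving an explicit proof, so you are filling in exactly the argument that is implicitly being left to the reader). You correctly identify that $(x_i,y_i)$ is the edge created by the sor-type pairing $\delta_{2i}(c)\nearrow c$, and you invoke the two parts of Proposition~\ref{eorandsor} at precisely the right points: part~1 to show that for $j>2i$ the cell $\delta_j(c)$ is collapsible (hence can be neither the redundant cell needed to continue the upper path nor the critical cell at which it terminates), and part~2 to show that no upper path starting at $\delta_j(c)$ with $j<2i-1$ reaches a critical cube, so the tail of $\gamma$ would violate it.

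One small imprecision in the $j=2i$ sub-case: the elementary factor $\delta_{2i}(c)\nearrow c\searrow\delta_{2i}(c)$ simply does not exist in the modified Hasse diagram. Since $\delta_{2i}(c)$ and $c$ are matched, the arrow from $c$ down to $\delta_{2i}(c)$ is exactly the one that gets reversed to $\delta_{2i}(c)\nearrow c$, so there is no down-arrow $c\searrow\delta_{2i}(c)$ left in $H_{\mathcal{F}}(W)$. (This is what the paper's parenthetical remark ``the cycle condition can only hold with $k>1$'' is alluding to.) Your invocation of a would-be cycle to exclude this case is therefore unnecessary and, as stated, not quite airtight; the case is vacuous by the definition of the modified Hasse diagram, with no appeal to the acyclicity of $W$ required. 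This does not affect the validity of the overall argument, since the two substantive cases $j>2i$ and $j<2i-1$ are handled correctly.
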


\begin{example}\label{dynamicsofupperpathsindim1}{\em
\red{Let us be specific about} the dynamics of an upper path $\gamma\colon c_0\nearrow d_1\searrow c_1\nearrow\cdots\searrow c_m$ that ends at a critical 1-cube $c_m$. By the $\Sigma_n$-equivariance of the gradient field, we can assume $c_0=(u_1,\ldots,u_i,v_1,\ldots,v_j,(y,\red{y[d\hspace{.4mm}]}),w_1,\ldots,w_k)$ with \red{$d\in\{1,2,\ldots,d(y)-1\}$ and}
$$
u_1<\cdots<u_i<y<v_1<\cdots<v_j<\red{y[d\hspace{.4mm}]}<w_1<\cdots< w_k,
$$
i.e., $c_0$ is the \red{$\Sigma_n$-orbit} representative whose ingredients appear in the $T$-ordering. By Corollary~\ref{dynamics}, the start of $\gamma$ is forced to consist of falling-vertex elementary paths, \red{where} the vertices $u_1,\ldots,u_i$ \red{fall}, each at a time, \red{until they form the stack $S_0(i)$ if $i$ vertices supported (and blocked)} by the root. \red{At that point $\gamma$ arrives at the 1-cube $(S_0(i),v_1,\ldots,v_j,(y,\red{y[d\hspace{.4mm}]}),w_1,\ldots,w_k),$ and} we see that $j$ must be positive, for otherwise \red{$\gamma$} would have reached a collapsible \red{$1$-}cube. \red{In} particular $y$ must be an essential vertex \red{and $d>1$.} Then, \red{again by Corollary~\ref{dynamics},} it is the turn of vertices $v_1,\ldots,v_j$ \red{that are forced} to fall, each at a time, \red{until they form stacks $S_{y[\ell\hspace{.3mm}]}(t_\ell)$ of vertices blocked by $y$ in $y$-directions $\ell=1,\ldots,d-1$. At that point $\gamma$ arrives at a 1-cube of the form} 
\begin{equation}\label{pasoapaso}
\red{\left(S_0(i),S_{y[1]}(t_1),\ldots,S_{y[d-1]}(t_{d-1}),(y,y[d\hspace{.4mm}]),w_1,\ldots,w_k\right)\!.}
\end{equation}
\red{Not all of the stacks $S_{y[\ell\hspace{.3mm}]}(t_\ell)$ are empty, so~(\ref{pasoapaso}) has} $(y,\red{y[d\hspace{.4mm}]})$ as a critical edge-ingredient. \red{The falling-vertex process is also forced by Corollary~\ref{dynamics} on those vertices $w_1,\ldots,w_k$ that are located in positive $y$-directions (if any), and this takes $\gamma$ to a 1-cube of the form}
$$
\red{\left(\!S_0(i),\hspace{-.3mm}S_{y[1]}(t_1),\hspace{-.3mm}\ldots\hspace{-.3mm},S_{y[d-1]}(t_{d-1}),\hspace{-.3mm}(y,\hspace{-.3mm}y[d\hspace{.4mm}]),\hspace{-.3mm}S_{y[d\hspace{.3mm}]+1}(t_d),\hspace{-.3mm}S_{y[d+1]}(t_{d+1}),\hspace{-.3mm}\ldots\hspace{-.3mm},\hspace{-.3mm}S_{y[d(y)-1]}(t_{d(y)-1}),\hspace{-.3mm}w_\rho,\hspace{-.3mm}\ldots\hspace{-.3mm},\hspace{-.3mm}w_k\!\right)\!,}
$$
\red{with $w_\rho,\ldots,w_k$ all lying in $y$-direction 0.} Branching \red{starts} from this point on, \red{with} explicit options discussed \red{in the next paragraph.}

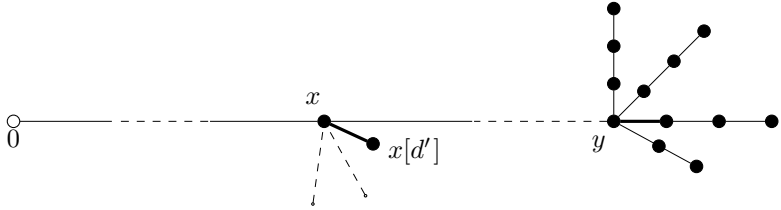
\begin{figure}[h!]
	\centering
	\begin{tikzpicture}[scale=1.0, rotate = 180, xscale = -1]
		\node[style={circle, draw, scale=.5}] (1) at ( 1.62, 3.2) {};
		\node[scale=.1] (2) at ( 2.94, 3.2) {};
		\node[scale=.1] (3) at ( 4.22, 3.2) {};
		\node[fill=black,style={circle, draw, scale=.5}] (4) at ( 5.75, 3.2) {};
		\node[style={circle, draw, scale=.1}] (5) at ( 5.6, 4.3) {};
		\node[scale=.1] (7) at ( 7.73, 3.2) {};
		\node[fill=black,style={circle, draw, scale=.5}] (8) at ( 6.4, 3.5) {};
		\node[style={circle, draw, scale=.1}] (9) at ( 6.3, 4.19) {};
		\node[fill=black,style={circle, draw, scale=.5}] (10) at ( 9.6, 3.2) {};
		\node (21) at ( 9.4, 3.5) {$y$};
              \node (22) at ( 5.6, 2.9) {$x$};
              \node (22) at ( 6.95, 3.6) {\red{$x[d'\hspace{.2mm}]$}};
		\node[fill=black,style={circle, draw, scale=.5}] (22) at ( 10.2, 3.53) {};
		\node[fill=black,style={circle, draw, scale=.5}] (16) at (10,2.8){};
		\node[fill=black,style={circle, draw, scale=.5}] (17) at (10.4,2.4){};
		\node[fill=black,style={circle, draw, scale=.5}] (18) at (10.8,2){};
		\node[fill=black,style={circle, draw, scale=.5}] (12) at ( 11, 3.2) {};
		\node[fill=black,style={circle, draw, scale=.5}] (13) at ( 10.7, 3.8) {};
		\node[fill=black,style={circle, draw, scale=.5}] (14) at (10.3,3.2){};
		\node[fill=black,style={circle, draw, scale=.5}] (15) at (11.7,3.2){};
\node[fill=black,style={circle, draw, scale=.5}] (19) at (9.6,2.7){};
\node[fill=black,style={circle, draw, scale=.5}] (20) at (9.6,2.2){};
\node[fill=black,style={circle, draw, scale=.5}] (30) at (9.6,1.7){};		
		\draw[very thin] (10) -- (19);
		\draw[very thin] (19) -- (20); \draw[very thin] (30) -- (20);
		\draw[very thin] (17) -- (18);
		\draw[very thin] (12) -- (15);
		\draw[very thin] (2) -- (1)node[below]{0};
		\draw (3)[dashed, very thin] -- (2);
		\draw (4)[very thin] -- (3);
		\draw[very thin] (7) -- (4);
		\draw[dashed, very thin] (10) -- (7);
		\draw[very thick] (14) -- (10);
		\draw[very thin] (12) -- (14);
		\draw[very thin] (16) -- (10);
		\draw[very thin] (17) -- (16);
		\draw[very thin] (13) -- (10);
		\draw[dashed, very thin] (5) -- (4);
		\draw[very thick] (8) -- (4);
		\draw[dashed,very thin] (9) -- (4);
	\end{tikzpicture}
	\caption{\red{A portion of the 1-cube} $d_{\red{\lambda}+1}$ \red{with its recently created} edge $(x,\red{x[d'\hspace{.2mm}]})$}
\label{branchtype}
\end{figure}

If \red{no vertices $w_\rho,\ldots,w_k$ are left,} then $\gamma$ \red{would have reached its final critical destination $c_m$.} Otherwise, \red{$w_{\rho}$} is forced to fall until $\gamma$ reaches, via some branch type pairing $c_{\red{\lambda}}\nearrow d_{\red{\lambda}+1}$, the 2-cube $d_{\red{\lambda}+1}$ depicted in Figure~\ref{branchtype}. At this point there are two options for $d_{\red{\lambda}+1}\searrow c_{\red{\lambda}+1}$. In the first option, $c_{\red{\lambda}+1}$ is obtained from $d_{\red{\lambda}+1}$ by replacing the recently created edge $(x,\red{x[d'\hspace{.2mm}]})$ by $x$, i.e., \red{with an upper elementary path} $c_{\red{\lambda}}\nearrow d_{\red{\lambda}+1}\searrow c_{\red{\lambda}+1}$ of falling-vertex type. In such a case, $\gamma$ is forced to continue with the vertex $x$ falling until it is added to the stack of vertices blocked by the root 0. This leaves us at a situation similar to the one at the start of this paragraph. In the second option, $c_{\red{\lambda}+1}$ is obtained from $d_{\red{\lambda}+1}$ by replacing the edge $(y,\red{y[d\hspace{.3mm}]})$ by either of its end points. In such a case, $\gamma$ is forced to continue:
\begin{enumerate}
\item with the falling of the vertices that are now unblocked in the neighborhood of $y$ (see Figure \ref{branchtype}), until they form a \red{stack} of vertices blocked by $x$ ---thus \red{starting} a critical situation around \red{the edge $(x,x[d'])$}--- and, then,
\item with the falling of the vertices (if any) in $x$-direction\red{s} \red{from $d'$ to $d(x)-1$}, which form (possibly empty) \red{stacks} of vertices blocked \red{either} by $x$ or $\red{x[d'\hspace{.2mm}]}$ \red{---thus completing the critical situation around the edge $(x,x[d'])$.}
\end{enumerate}
Again, this leaves us at a situation similar to the one at the start of this paragraph, \red{but} now with the edge $(x,\red{x[d'\hspace{.2mm}]})$ playing the role of the edge $(y,\red{y[d\hspace{.3mm}]})$. The branching process in this paragraph then repeats, necessarily a finite number of times, until all vertices $w_{\red{\rho}},\ldots,w_k$ have been considered, \red{when $\gamma$ reaches its critical destination $c_m$.}
}\end{example}

\begin{proposition}\label{underneathphi}
A cocycle in $C^*(\DnT)$ representing a \red{1-dimensional} cohomology class $\{k\l x,p,q\}$ \red{in} $\image$, \red{with $p=(p_1,\ldots,p_r)$ and $q=(q_1,\ldots,q_s)$,} is given by 
\begin{equation}\label{representingcohomologyclass}
\sum\left(u_1,\ldots,u_k,v_1,\ldots,v_{|p|},(x,\red{x[r+1]}),w_1,\ldots,w_{|q|}\right)\cdot\sigma,
\end{equation}
where the summation runs over
\begin{itemize}
\item all permutations $\sigma\in\Sigma_n$,
\item all possible vertices $u_1\hspace{-.3mm}<\hspace{-.3mm}\cdots\hspace{-.3mm}<\hspace{-.3mm}u_k$ in the component of $T\setminus\{x\}$ in $x$-direction 0,
\item all possible vertices $v_1\hspace{-.3mm}<\hspace{-.3mm}\cdots\hspace{-.3mm}<\hspace{-.3mm}v_{|p|}$ in the component\red{s} of $T\setminus\{x\}$ in $x$-directions \red{from $1$ to $r$ so that, for $i\in\{1,\ldots, r\}$, $p_i$ of the vertices $v_1\hspace{-.3mm}<\hspace{-.3mm}\cdots\hspace{-.3mm}<\hspace{-.3mm}v_{|p|}$ lie in $x$-direction $i$,}
\item all possible vertices $w_1\hspace{-.3mm}<\hspace{-.3mm}\cdots\hspace{-.3mm}<\hspace{-.3mm}w_{|q|}$ in the component\red{s} of $T\setminus\{x\}$ in $x$-directions \red{greater than $r$ so that, for $j\in\{r+1,\ldots, d(x)-1\}$, $q_{j-r}$ of the vertices $w_1\hspace{-.3mm}<\hspace{-.3mm}\cdots\hspace{-.3mm}<\hspace{-.3mm}w_{|q|}$ lie in $x$-direction $j$.}
\end{itemize}
\end{proposition}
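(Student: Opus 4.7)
The strategy is to recognize the right-hand side of~(\ref{representingcohomologyclass}) as $\overline{\Phi}(\pi^*(\{k\mid x,p,q\}))\in C^*(\DnT)$, which by the commutativity of the diagram in Lemma~\ref{keydiagram} represents the given cohomology class. By the $\Sigma_n$-equivariance of Farley-Sabalka's gradient field (Remark~\ref{equivariantFS}) together with the explicit form of $\pi^*$ in~(\ref{simpleform}), it suffices to compute $\overline{\Phi}(A)\in C^*(\DnT)$ for the single critical 1-cube $A$ in $\DnT$ whose ingredients appear in $T$-order, and then to sum over $\sigma\in\Sigma_n$: the outer $\sigma$-summation in~(\ref{representingcohomologyclass}) will arise this way.

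By~(\ref{quasi-isomorphisms}),
$$
\overline{\Phi}(A)=\sum_a\left(\sum_{\gamma\in\overline{\Gamma}(a,A)}\mu(\gamma)\right)a,
$$
so the task is to enumerate the starting 1-cubes $a$ admitting upper paths to $A$ and to evaluate the net multiplicity. By Corollary~\ref{dynamics}, every sor-type elementary factor of any such $\gamma$ is of falling-vertex type (multiplicity $1$), and the resulting forced descents of the $u$-, $v$- and $w$-vertices of $a$ uniquely assemble the stacks $S_0(k)$, $S_{x[i]}(p_i)$ and $S_{y_{1,j}}(q_{j-r})$ composing $A$. This already matches the componentwise distribution of vertices prescribed in the summation~(\ref{representingcohomologyclass}).

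The non-trivial choices occur at the branch-type pairings $c_\lambda\nearrow d_{\lambda+1}$ isolated in Example~\ref{dynamicsofupperpathsindim1}: at each such pairing there are up to three available successor arrows $d_{\lambda+1}\searrow c_{\lambda+1}$, corresponding to option~1 and the two sub-options of option~2. Using the cubical boundary~(\ref{productboundary}), Remark~\ref{farleysabalkapairsevenfaces} and the multiplicity formula~(\ref{multiplicitydefinition}) with gradient orientations, one computes that option~1 contributes multiplicity $+1$ and preserves the active edge, while the two (valid) option~2 choices contribute $\pm 1$ and transfer the active edge to the newly created $(x',x'[d'])$. The decisive step is to show that the option~2 contributions either cancel in pairs (because the forced continuations of the two sub-options reach a common critical destination) or route the path to critical cubes distinct from $A$. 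Granted this, only option~1 survives at every branching: the active edge remains $(x,x[r+1])$ throughout, $a$ is forced to have this edge-ingredient, and every surviving path carries total multiplicity $+1$, yielding~(\ref{representingcohomologyclass}) after summing over $\Sigma_n$.

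The hardest part will be the option~2 exclusion-by-cancellation just described: it requires careful sign bookkeeping with gradient orientations and, very likely, an inductive argument to handle nested branch configurations, in order to ensure that no surviving contribution falls outside~(\ref{representingcohomologyclass}).
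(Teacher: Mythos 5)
Your proposal identifies the correct framework and matches the paper's strategy step by step: unwind $\overline{\Phi}\circ\pi^*$ via Lemma~\ref{keydiagram} and~(\ref{simpleform}), invoke Corollary~\ref{dynamics} and Example~\ref{dynamicsofupperpathsindim1} to see that the purely falling-vertex upper paths (multiplicity $1$) produce exactly the summands in~(\ref{representingcohomologyclass}), and then argue that the remaining contributions cancel. You also correctly compute that the two option-2 faces at a branch-type $2$-cube carry opposite signs. However, the decisive cancellation step is only sketched, and the sketch contains a conceptual wobble that the paper avoids.

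The gap is precisely the pairing that you flag as ``the hardest part.'' The paper resolves it with a single, non-inductive involution $\iota$ on $\mathcal{G}\setminus\mathcal{G}'$: given a path $\gamma$ with at least one non-falling-vertex elementary factor, locate the \emph{last} such factor $c\nearrow d\searrow e$ and replace $e$ by the face of $d$ obtained from the \emph{other} endpoint of the same edge, then continue with forced falling-vertex steps. Because only the last non-falling-vertex factor is altered, $\gamma$ and $\iota(\gamma)$ share the same starting $1$-cube $\mathcal{S}_\gamma$, their multiplicities differ by a sign (the remaining falling-vertex steps all have multiplicity $1$), and the two ending $1$-cubes lie in the same $\Sigma_n$-orbit, so $\iota(\gamma)$ stays inside $\mathcal{G}$. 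No ``inductive argument to handle nested branch configurations'' is needed; the choice of the last such factor does all the work. Your hedge --- that option-2 sub-options might instead ``route the path to critical cubes distinct from $A$'' --- is a red herring: after a flip of the last non-falling-vertex factor, the forced falling-vertex continuation necessarily lands in the $\Sigma_n$-orbit of $A$, so both sub-options always remain in $\mathcal{G}$ and do cancel. Without the ``flip-the-last-factor'' device, the local cancellation you envision does not obviously assemble into a global bijection, so the proposal as written has a genuine, though localized, gap.
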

\begin{proof}
By construction, the representing cocycle $z$ we need is obtained by chasing, on the left square of the diagram in Lemma \ref{keydiagram}, the dual of the unordered critical cube \red{$\{c\}$} whose ordered \red{critical} representative is 
$
\red{(c):=\left(S_0(k),S_{x[1]}(p_1),\ldots,S_{x[r]}(p_r),(x,x[r+1]),S_{x[r+1]+1}(q_1),S_{x[r+2]}(q_2),\ldots,S_{x[d(x)-1]}(q_s)\right)\!.}
$
By (\ref{quasi-isomorphisms}) and (\ref{simpleform}),
\begin{equation}\label{musumationforz}
z=\overline{\Phi}\circ\pi^*\left(\red{\{c\}}\right)=\sum_{\gamma\in\mathcal{G}}\mu(\gamma)\cdot\mathcal{S}_\gamma,
\end{equation}
where $\mathcal{G}$ is the set of upper paths $\gamma$ that start at a 1-cube $\mathcal{S}_\gamma$ and finish at a 1-cube of the form $c\cdot\sigma$ with $\sigma\in\Sigma_n$. Let $\mathcal{G}'$ be the set of paths $\gamma\in\mathcal{G}$ all whose \red{upper} elementary factors are of falling-vertex type. Since $\mu(\gamma)=1$ for $\gamma\in\mathcal{G}'$, the analysis in Example~\ref{dynamicsofupperpathsindim1} shows that the summands in (\ref{representingcohomologyclass}) arise from the summands in (\ref{musumationforz}) with $\gamma\in\mathcal{G}'$. It thus suffices to show
\begin{equation}\label{cancellation}
\sum_{\gamma\in\mathcal{G}\setminus\mathcal{G}'}\mu(\gamma)\cdot\mathcal{S}_\gamma=0,
\end{equation}
which will be done by constructing an involution $\iota\colon\mathcal{G}\setminus\mathcal{G}'\to \mathcal{G}\setminus\mathcal{G}'$ such that every pair of paths $\gamma$ and $\iota(\gamma)$ have the same origen but opposite multiplicities, i.e.,
\begin{equation}\label{propertiesrequired}
\mathcal{S}_{\iota(\gamma)}=\mathcal{S}_{\gamma}\quad\mbox{and}\quad\mu(\iota(\gamma))=-\mu(\gamma)
\end{equation}
---thus their contributions to (\ref{cancellation}) cancel each other out. For a path $\gamma\in\mathcal{G}\setminus\mathcal{G}'$, let $\gamma_\text{last}=\left(c\nearrow d\searrow e\right)$ denote the last elementary factor of $\gamma$ that is not of falling-vertex type. In the notation of Example~\ref{dynamicsofupperpathsindim1}, $e$ is obtained from $d$ by replacing an edge $(y,\red{y[d\hspace{.3mm}]})$ by either $y$ or $\red{y[d\hspace{.3mm}]}$, and both options are possible. Then $\iota(\gamma)$ is defined so to start with the same factorization of $\gamma$ into elementary paths, except for the elementary factor $\gamma_\text{last}$, for which the other end-point of $(y,\red{y[d\hspace{.3mm}]})$ is taken, and after which the rest of the elementary factors are of falling-vertex type ---just like for $\gamma$. Note that the ending 1-cubes of $\gamma$ and $\iota(\gamma)$ lie in the same $\Sigma_n$-orbit, so $\iota(\gamma)\in\mathcal{G}\setminus\mathcal{G}'$. The required properties (\ref{propertiesrequired}) follow from (the construction and \red{from}) the fact that elementary paths of falling-vertex type have multiplicity~1.
\end{proof}
 
The cancelation phenomenon in the previous proof allows us to give an easy gradient-path explanation of the main result in \cite{MR2216709}: the vanishing of the Morse differential in $\UDnT$. A variant of the cancellation phenomenon will also play an important role in our evaluation of cup products (Theorem~\ref{productsvialowerpaths} below). Thus, in preparation for that argument, we spell out the gradient proof of:

\begin{proposition}\label{trivialmorsediff}
The Morse differential in $\UDnT$ vanishes.
\end{proposition}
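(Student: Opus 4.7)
The plan is to extend the cancellation argument of Proposition~\ref{underneathphi} one dimension up. Fix critical cubes $A^{(p)}$ and $B^{(p+1)}$; by~(\ref{morsecoboundary}) the coefficient of $B$ in $\partial(A)$ is
$$N(A,B) \;:=\; \sum_{b^{(p)}\subset B}\iota_{b,B}\sum_{\gamma\in\overline{\Gamma}(b,A)}\mu(\gamma),$$
and the goal is $N(A,B)=0$. Working at the ordered level of $\DnT$ (cf.~Remark~\ref{equivariantFS}) with an ordered representative of $A$ and summing over $\Sigma_n$-translates, I would construct a fixed-point-free involution $\tau$ on the set of ``mixed paths'' $(b,\gamma)$ contributing to $N(A,B)$ such that paired paths carry opposite weights $\iota_{b,B}\,\mu(\gamma)$.

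First I would formulate a higher-dimensional analogue of the dynamics description in Example~\ref{dynamicsofupperpathsindim1}: any upper path $\gamma$ terminating at a critical cube consists of falling-vertex elementary factors (each of multiplicity~$1$) interspersed with a finite list of ``branch moments'', i.e.~branch-type elementary factors $c\nearrow d\searrow e$. At each branch moment, $d$ contains a newly-created order-disrespectful edge $(x,x[d'\hspace{.2mm}])$ and there are exactly two choices for the descending step $e$: replace this edge by its lower endpoint $x$, or by its upper endpoint $x[d'\hspace{.2mm}]$. In the second option $x[d'\hspace{.2mm}]$ is unblocked (because $x$ has just been vacated) and must immediately fall back to $x$ via a forced multiplicity-$1$ factor, so both options produce paths that agree after this one-step detour and reach the same critical endpoint. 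The two branch options, however, carry opposite multiplicities by~(\ref{multiplicitydefinition}) combined with the alternating signs of~(\ref{productboundary}).

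This motivates the involution $\tau$. If $(b,\gamma)$ contains a branch moment within $\gamma$, switch the choice made at the \emph{last} such branch; the resulting mixed path $(b,\gamma')$ has the same initial face $b$, still ends at $A$, and satisfies $\mu(\gamma')=-\mu(\gamma)$. Otherwise $\gamma$ is purely falling-vertex; in that case $\tau$ instead swaps the initial boundary step, pairing $b=\delta_{2r-1}(B)$ (obtained by replacing the $r$-th edge $(x_r,y_r)$ of $B$ by $x_r$) with $b'=\delta_{2r}(B)$ (using $y_r$), where $r$ is the unique index determined by $b$ as a face of $B$. Since $\iota_{b,B}=-\iota_{b',B}$ and the new path $\gamma'$ simply prepends the forced multiplicity-$1$ fall $y_r\to x_r$ to $\gamma$ (so $\mu(\gamma')=\mu(\gamma)$), this swap yields opposite weight. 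The two cases together cover every mixed path, whence the contributions pair up and cancel.

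The main hurdle I anticipate is the ``purely falling-vertex'' case: one must verify that, after the swap $b\mapsto b'$, the vertex $y_r$ is indeed the first non-critical ingredient of $b'$, so that Farley--Sabalka dynamics immediately pair $y_r$ and let it fall back to $x_r$ before the trajectory proceeds identically to $\gamma$. This verification rests on the stack-and-direction structure of critical cubes recorded in items~\emph{(a)}--\emph{(e)} of Section~\ref{abrfarsabgradient} and on Proposition~\ref{eorandsor}; the case analysis is finite but requires care to rule out competing smaller non-critical ingredients that would hijack the pairing and send $\gamma'$ to a different critical cube.
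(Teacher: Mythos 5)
Your two-case dichotomy (purely falling-vertex paths vs.\ paths with a branch) and the cancellation strategy mirror the paper's own split into $\iota_{\text{fall}}$ and $\iota_{\text{branch}}$, but both halves of your involution $\tau$ are built on a misreading of the dynamics. In the branch case you identify the ``two choices'' at a branch moment $c\nearrow d\searrow e$ as the two faces of the newly-created edge $(x,x[d'\hspace{.2mm}])$. But by Remark~\ref{farleysabalkapairsevenfaces} the upper-endpoint face is $\delta_{2i}(d)=c$, so that option produces the step $c\nearrow d\searrow c$, which is not a directed path in the modified Hasse diagram. The actual branching is the choice of which face of a \emph{different} edge $(y,y[d\hspace{.3mm}])$ to take (the ``second option'' in Example~\ref{dynamicsofupperpathsindim1}), and the two faces of \emph{that} edge are the ones carrying opposite incidence signs. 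Since a non-falling-vertex factor is precisely one where $e$ is a face of an edge other than the new one, the involution you describe never touches the face that actually varies, and $\tau$ is not well-defined on that set of paths.

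In the falling-vertex case the obstruction you flag at the end does occur, and not harmlessly. Passing from $b=\delta_{2r-1}(B)$ to $b'=\delta_{2r}(B)$ vacates the essential vertex $x_r$, which unblocks the vertex-ingredients of $B$ lying in positive $x_r$-directions below $y_r$; at least one such ingredient exists by item~\emph{(d)} of the critical-cube description, and it precedes $y_r$ in the $T$-order. Hence $y_r$ is not the first non-critical ingredient of $b'$, and in any event $b'\nearrow B$ cannot be a gradient arrow because $B$ is critical and therefore unmatched. So $\gamma'$ is not ``$\gamma$ with a single fall $y_r\to x_r$ prepended,'' and your $\tau$ as stated does not produce a gradient path. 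The correct definition sets $\gamma'$ to be the (unique) purely falling-vertex upper path issuing from $b'$; the verification the proposal omits is that this genuinely different path terminates at the same critical cube $A$, so that the swap lands back in the set being paired. Once that is established, both $\gamma$ and $\gamma'$ have multiplicity $1$ and opposite incidence at the start, and the cancellation goes through --- but by a different route than the one you sketched.
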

\begin{proof}
By Remark \ref{equivariantFS}, \red{it suffices to} do the gradient path analysis directly at the level of $\UDnT$. For a pair of \red{unordered} critical cubes $c^{(k)}$ and $d^{(k-1)}$, let $\Gamma(c,d)$ be the set of \red{mixed} gradient paths $\red{\gamma}\colon c\searrow\bullet\nearrow\bullet\searrow\cdots\searrow d$. By (\ref{morsecoboundary}), we only need to construct an involution $\iota\colon\Gamma(c,d)\to\Gamma(c,d)$ so that, for every $\gamma\in\Gamma(c,d)$, $\mu(\iota(\gamma))=-\mu(\gamma)$. (\red{Recall that} the multiplicity of $\gamma\in\Gamma(c,d)$ is the incidence number for $c\searrow\bullet$ multiplied by the multiplicity of the remaining upper path $\bullet\nearrow\bullet\searrow\cdots\searrow d$.) Let $\Gamma(c,d)_{\text{fall}}$ consist of the paths in $\Gamma(c,d)$ all whose \red{upper} elementary factors are of falling-vertex type. The definition of the restricted $\iota_{\text{fall}}\colon\Gamma(c,d)_{\text{fall}}\to\Gamma(c,d)_{\text{fall}}$ uses the two forms of replacing by a vertex the edge-ingredient at the start of  the path. Likewise, for $\Gamma(c,d)_{\text{branch}}:=\Gamma(c,d)-\Gamma(c,d)_{\text{fall}}$, the definition of the restricted $\iota_{\text{branch}}\colon\Gamma(c,d)_{\text{branch}}\to\Gamma(c,d)_{\text{branch}}$ uses the two forms of replacing by a vertex the edge ingredient at the last upper elementary factor that is not of falling-vertex type.
\end{proof}

Propositions \ref{productsinTn} and \ref{underneathphi} immediately yield:
\begin{corollary}\label{trivialsquares}
The product of two basis elements $\{k,x,p,q\},\{k',x',p',q'\}\in\image$ vanishes provided $x=x'$. In particular, squares of 1-dimensional elements in $\image$ are trivial.
\end{corollary}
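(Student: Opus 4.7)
The plan is to compute the product directly at the cochain level on $\DnT$, using the explicit cocycle representatives given by Proposition \ref{underneathphi} and the coordinate-wise cubical cup product formula of Proposition \ref{productsinTn}. First I would pull each class back via $\pi^*$: the representing cocycle for $\{k\l x,p,q\}$ is a $\Sigma_n$-averaged sum of duals of 1-cubes whose unique edge-ingredient is $(x,x[r+1])$, and likewise the cocycle for $\{k'\l x',p',q'\}$ is built from 1-cubes whose unique edge-ingredient is $(x',x'[r'+1])=(x,x[r'+1])$, using the hypothesis $x=x'$. In particular, both edge-ingredients share the endpoint $x$; and every vertex-ingredient of either cube differs from $x$, since ingredients of a cube in $\DnT$ have pairwise disjoint closures in $T$.

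Next I would expand the cup product bilinearly and evaluate each summand $c\cdot d$ by Proposition \ref{productsinTn}. Two cases arise. If the edge-ingredients of $c$ and $d$ sit in the same cartesian coordinate, that coordinate contributes a product of two non-degenerate intervals, which vanishes. Otherwise, let $j$ be the coordinate where $d$ carries its edge; then $c$ contributes some vertex-ingredient $v$ at coordinate $j$, and the interval-product $[v]\cdot[x,x[r'+1]]$ is nonzero only if $v=x$. Since no vertex-ingredient of $c$ equals $x$, this coordinate-wise product also vanishes, and hence $c\cdot d=0$.

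Every summand being zero, the product cocycle is already trivial in $C^*(\DnT)$, and the cup product therefore vanishes in cohomology. I do not foresee any real obstacle here: the argument is a direct consequence of Propositions \ref{underneathphi} and \ref{productsinTn}, the decisive geometric input being the pairwise-disjointness of ingredients of cubes in $\DnT$, which blocks the only vertex-versus-edge configuration that could produce a nonzero coordinate-wise product. Taking $\{k,x,p,q\}=\{k',x',p',q'\}$ then yields the vanishing of 1-dimensional cup squares.
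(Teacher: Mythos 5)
Your proposal is correct and follows the route the paper intends when it states that Propositions~\ref{productsinTn} and~\ref{underneathphi} ``immediately yield'' the corollary: you simply spell out the coordinate-wise analysis the paper leaves implicit. The decisive observation — that $c$'s edge-ingredient occupies $x$, so disjointness of ingredients in $\DnT$ prevents any vertex-ingredient of $c$ from equaling the left endpoint $x$ of $d$'s edge — is precisely the geometric input that makes the cubical product vanish coordinate-by-coordinate.
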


\section{Cup products I: Upper gradient paths}\label{sectioncupproducts}
\red{The goal for this section and the next one is to get at a workable description of products
\begin{equation}\label{aproduct}
\red{\left\{k_1\ll x_1,(p_{1,1},\ldots,p_{1,r_1}\hspace{-.3mm}),(q_{1,1},\ldots,q_{1,s_1}\hspace{-.3mm})\right\}{\cdots}
\left\{k_m\ll x_m,(p_{m,1},\ldots,p_{m,r_m}\hspace{-.3mm}),(q_{m,1},\ldots,q_{m,s_m}\hspace{-.3mm})\right\}}
\end{equation}
in $\image$. Associated to such a product, from now on we set $p_i:=(p_{i,1},\ldots,p_{i,r_i})$, $q_i:=(q_{i,1},\ldots,q_{i,s_i})$, $|p_i|:=\sum_{\ell=1}^{r_i}p_{i,\ell}$, $|q_i|:=\sum_{\ell=1}^{s_i}q_{i,\ell}$, and make free use of \emph{(i)} the order-disrespectful edge $(x_i,x_i[r_i+1])$ encoded in the $i$-th factor of~(\ref{aproduct}), of \emph{(ii)} the conditions $k_i+\sum_{j} p_{i,j}+\sum_{j'} q_{i,j'}=n-1$, $r_i+s_i=d(x_i)-1$ and $r_i,s_i\geq1$, and of \emph{(iii)} the fact that, for each $i$, $k_i$ and all of the $p_{i,\ell}$ and $q_{i,\ell}$ are non-negative, with not all of the $p_{i,\ell}$ being zero. Additionally, in view of Corollary~\ref{trivialsquares}, we can safely assume $x_1<\cdots<x_m$. Last, we use the shorthand $$d_i:=d(x_i)-1 \mbox{ \ \ and \ \ }\overline{x}_i:=x_i[r_i+1].$$

We start by tuning up the definition in Section~\ref{mainresult} of the components $C_{i,\red{\ell_i}}$ of $T\setminus\{x_1,\ldots,x_m\}$.} 

\begin{definition}[\red{Leaves and pruned trees}]\label{hojas}
Set $T_{0,1}:=C_{0,1}$ and, for $1\leq i\leq m$ and $1 \leq \red{\ell_i} \leq d(x_i)-1$, 
\begin{equation*}
\red{T_{i,\ell_i} :=\begin{cases}
C_{i,\ell_i}\cup\{x_i\}, & \mbox{if $\ell_i \neq r_i+1$;} \\
C_{i,\ell_i}\setminus\text{Int}(x_i,\overline{x}_i), & \mbox{if $\ell_i=r_i+1$,}
\end{cases}}
\end{equation*}
where $\text{Int}(x_i,\overline{x}_i)$ stands for the interior of the edge $(x_i,\overline{x}_i)$. We think of \red{each} $T_{i,\red{\ell_i}}$ \red{$(0\leq i\leq m)$} as a rooted but possibly pruned tree. Namely, in the notation of Section~\ref{mainresult} and setting $x_0:=0$, the root of $T_{i,\red{\ell_i}}$ is \red{$x_i$, if $i=0$ or if $i>0$ with $\ell_i\neq r_i+1$, whereas the root of $T_{i,r_i+1}$ is $\overline{x}_i$. Furthermore, the} set of pruned leaves of $T_{i,\red{\ell_i}}$ is $L_{i,\red{\ell_i}}:=B(C_{i,\red{\ell_i}})\setminus\{x_i\}$. 
\end{definition}

\begin{remark}\label{separticionaadecuadamente}{\em
\red{Just as} the sets $L_{i,\red{\ell_i}}$ give a partition of $\{x_1,\ldots,x_m\}$, the \red{union of the} trees $T_{i,\red{\ell_i}}$ \red{agrees with the difference} $T\setminus\bigcup_{i=1}^m\text{Int}(x_i,\overline{x}_i)$. \red{Actually,} each vertex of $T$ other than $x_i$ for $ 1 \leq i \leq m$, as well as each semi-open edge $(x,y)\setminus\{y\}$ of $T$ \red{not of the form $(x_i,\overline{x}_i)\setminus\{\overline{x}_i\}$ with $1\leq i\leq m$,} belongs to a tree~$T_{i,\red{\ell_i}}$ for a unique $\ell_i$.
}\end{remark}

Definition~\ref{strongvsweak} is recast by the second part of:
\begin{definition}\label{interactionparameters}
\begin{enumerate}
\item For a $\tau$-tuple of integers $t=(t_1,\ldots,t_\tau)$, we write $t \geq 0$ to mean that $t_j\geq0$ for all $j\in\{1,\ldots,\tau\}$, reserving the expression $t>0$ to mean that $t\geq0$ with $t_j>0$ for at least one $j\in\{1,\ldots,\tau\}$. Also, when $t\geq0$, we write $\underline{t}$ to denote a generic tuple of integers $(t'_1, \dots, t'_\tau)\geq0$ satisfying $t'_j \leq t_j$ for all $j\in\{1,\ldots,\tau\}$ with in fact $t'_j < t_j$ for at least one $j\in\{1,\ldots,\tau\}$. We make no distinction between 1-tuples $(t_1)$ and integer numbers $t_1$ so, accordingly, we use $\underline{t_1}$ instead of $\underline{(t_1)}$. 
\item \red{The interaction parameters $\mathcal{R}_0$, $\mathcal{P}_i:=(\mathcal{P}_{i,1},\ldots,\mathcal{P}_{i,r_i})$ and $\mathcal{Q}_i:=(\mathcal{Q}_{i,1},\ldots,\mathcal{Q}_{i,s_i})$ of the factors in~$(\ref{aproduct})$ are given by}
\begin{align*}
\mathcal{R}_0 & :=n\,+\!\sum_{x_j\in L_{0,1}}(k_j-n), \\
\mathcal{P}_{i,\red{\ell_i}} &:=p_{i,\red{\ell_i}}\,+\!\sum_{x_j\in L_{i,\red{\ell_i}}}(k_j-n),\quad \!\!\!\!\mbox{for $i\in\{1,\ldots,m\}$ and $\hspace{.3mm}\red{\ell_i}\in\{1,\dots, \red{r_{i}}\}$},\quad\!\!\!\!\mbox{and}\\
\mathcal{Q}_{i,\red{\ell_i}} &:=q_{i,\red{\ell_i}}\,+\!\!\!\sum_{x_j\in L_{i,\red{\ell_i+r_{i}}}}\!\!\!\!(k_j-n),\quad \!\!\!\!\mbox{for $i\in\{1,\ldots,m\}$ and $\hspace{.3mm}\red{\ell_i}\in\{1,\dots, \red{s_{i}}\}$.}
\end{align*}
\red{If $\mathcal{R}_0\geq0$, $\mathcal{P}_i\geq0$ and $\mathcal{Q}_i\geq0$ for all $i=1,\ldots,m$, we say that the factors in~$(\ref{aproduct})$ interact weakly and, if in addition $\mathcal{P}_i>0$ for some $i$, we say that the factors in~$(\ref{aproduct})$ interact strongly. Otherwise, we say that the factors in~$(\ref{aproduct})$ do not interact.}
\end{enumerate}
\end{definition}

\red{Although not reflected in the notation, pruned trees and leaves depend on the essential vertices $x_i$, while interaction parameters depend on the complete information encoded by the factors in~(\ref{aproduct}). Latter in the paper we will need to use pruned trees, their pruned leaves, as well as interaction parameters of subproducts of~(\ref{aproduct}). In such a case, we will use a notation of the type $T_{i,\ell_i}(x_1,\ldots,x_m)$, $L_{i,\ell_i}(x_1,\ldots,x_m)$, $\mathcal{R}_0(x_1,\ldots,x_m)$, $\mathcal{P}_{i,\ell_i}(x_1,\ldots,x_m)$, $\mathcal{Q}_{i,\ell_i}(x_1,\ldots,x_m)$, as well as $\mathcal{P}_i(x_1,\ldots,x_m)$ and $\mathcal{Q}_i(x_1,\ldots,x_m)$ in order to clarify the factors under consideration.}

\medskip
Next we adapt the expression in (\ref{representingcohomologyclass}) for usage \red{within} the $T_{i,\red{\ell_i}}$-notation. \red{In terms of} the cocycle representative
\begin{equation}\label{core}
\red{\sum\!\left(U_i,V_i,(x_i,\overline{x}_i),W_i\rule{0mm}{4mm}\right)\!\cdot\!\sigma}:=\!\sum\!\left(u_1,\ldots,u_{k_i},v_1,\ldots,v_{|p_{i}|},(x_i,\red{\overline{x}_i}),w_1,\ldots,w_{|q_{i}|}\right)\!\cdot\sigma
\end{equation}

\vspace{.45mm}\noindent \red{in Proposition~\ref{underneathphi} for $\{k_i\l x_i,p_i,q_i\}$, (\ref{aproduct}) is represented by the sum of all possible products
\begin{equation}\label{imenorquej}
\cdots\left((U_i,V_i,(x_i,\overline{x}_i),W_i)\cdot\sigma_i\rule{0mm}{4mm}\right)\cdots\left((U_j,V_j,(x_j,\overline{x}_j),W_j)\cdot\sigma_j\rule{0mm}{4mm}\right)\cdots.
\end{equation}
A number of vanishing such products can be ruled out as follows. Fix integers $1\leq i<j\leq m$. Proposition~\ref{productsinTn} implies that, if a product~(\ref{imenorquej}) is non-zero, then $(U_i,V_i,(x_i,\overline{x}_i),W_i)$ must have $x_j$, but cannot have $\overline{x}_j$, as one of its vertex ingredients. Likewise, $(U_j,V_j,(x_j,\overline{x}_j),W_j)$ must have $\overline{x}_i$, but cannot have $x_i$, as one of its vertex ingredients. Actually, together with Remark~\ref{separticionaadecuadamente}, this shows that non-zero products~(\ref{imenorquej}) are best organized (and easily evaluated ---see below) by replacing each} $\Sigma_n$-representative
\begin{equation}\label{elrepresentative}
(u_1,\ldots,u_{k_i},v_1,\ldots,v_{|p_{i}|},(x_i,\red{\overline{x}_i}),w_1,\ldots,w_{|q_{i}|})
\end{equation}
\red{in~(\ref{core})} by \red{the one} written in \red{a ``block''} form $(B^i_0,\red{B^i_1},\ldots,B^i_m)$. \red{Here each tuple of ingredients $B^i_j$ starts with the relevant $x_j$- or $\overline{x}_j$-information (if $j>0$), and continues with a repacking of the vertex} ingredients of (\ref{elrepresentative}) \red{that lie} in the trees $T_{j,\red{\ell}}$ \red{for all relevant $\ell$.} In detail, \red{for the $i$-th factor in~(\ref{aproduct}) and each of the corresponding summands in~(\ref{elrepresentative}), let}
\begin{itemize}
\item[{\em (a)}] $B_0^i:=B_{0,1}^i$ be the tuple of \red{vertex ingredients} of (\ref{elrepresentative}) \red{that lie} in $T_{0,1}$, written in $T$-order;
\item[{\em (b)}] $B_i^i:=\left((x_i,\red{\overline{x}_i}),B^i_{i,1}, \ldots, B^i_{i,\red{d_i}}\rule{0mm}{4mm}\right)$, where $B^i_{i,\red{\ell}}$ is the tuple of \red{vertex ingredients} of (\ref{elrepresentative}) \red{that lie} in $T_{i,\red{\ell}}$, written in $T$-order;
\item[{\em (c)}] \red{If $i<j$,} $B^i_j:=(\red{x_j}, B^i_{j,1},\ldots, B^i_{j,d_j})$, where $B^i_{j,\red{\ell}}$ is the tuple of \red{vertex ingredients} of (\ref{elrepresentative}) \red{that lie} in $T_{j,\red{\ell}}$, written in $T$-order;
\item[{\em (d)}] \red{If $j<i$, $B^i_j:=(\overline{x}_j, B^i_{j,1},\ldots, B^i_{j,d_j})$, where $B^i_{j,\ell}$ is the tuple of \red{vertex ingredients} of (\ref{elrepresentative}) \red{that lie} in $T_{j,\ell}$, written in $T$-order.}
\end{itemize}
\red{Thus, summands in (\ref{core}) that have a chance to contribute with non-vanishing products~(\ref{imenorquej}) to a cocycle representative of~(\ref{aproduct}) can be written as}
\begin{align*}
&\left(\rule{0mm}{5mm}\red{B^i_{0,1}}\ll \cdots \ll
\overline{x}_{i'},B^i_{i',1}, \dots ,B^i_{i',\red{d_{i'}}}\ll\cdots\ll
(x_i,\overline{x}_i),B^i_{i,1}, \dots, B^i_{i,d_i}\ll\ldots\ll
{x}_{i''},B^i_{i'',1},\dots, B^i_{i'',\red{d_{i''}}}\ll \cdots
\right)\cdot\sigma,
\end{align*}
\red{where vertical bars are used interchangeably by commas, and are intended to make reading easier. Proposition~\ref{productsinTn} then implies that a product~(\ref{imenorquej}), written as}
\begin{align*}
\left(\rule{-1.1mm}{5mm}\left(\red{B^1_{0,1}}\ll(x_1,\overline{x}_1),B^1_{1,1},\dots, B^1_{1,d_1}\ll{\cdots}\ll \right.\right.&\left.\left.\hspace{-2mm}x_m,B^1_{m,1},\dots, B^1_{m,\red{d_m}}\right)\hspace{-.8mm}\cdot\hspace{-.5mm}\sigma_1\rule{0mm}{5mm}\right)\cdots \\
\cdots&\left(\rule{-1.1mm}{5mm}\left(\red{B^m_{0,1}}\ll\overline{x}_1,B^m_{1,1},\dots, B^m_{1,\red{d_1}}\ll{\cdots}\ll (x_m,\overline{x}_m),B^m_{m,1},\dots, B^m_{m,d_m}\right)\hspace{-.8mm}\cdot\hspace{-.5mm}\sigma_m\right)\!,
\end{align*}
is non-zero if and only if $\sigma_i=\sigma_j=:\sigma$ and $B^i_{\red{t,\ell}}=B^j_{\red{t,\ell}}=: B_{\red{t,\ell}}$ for all \red{relevant} $i,j,\red{t,\ell}$, in which case~(\ref{imenorquej}) becomes
\begin{equation}\label{cico}
\sign(\widetilde{\sigma})
\left(\red{B_{0,1}}\l(x_1,\overline{x}_1),B_{1,1},\cdots, B_{1,d_1}\l\cdots\l (x_m,\overline{x}_m),B_{m,1},\cdots,B_{m,d_m}\rule{0mm}{4mm}\right)\!\cdot\sigma,
\end{equation}
where $\widetilde{\sigma}$ is the permutation determined by the sequence of positions of the edges $(x_1,\overline{x}_1),\ldots,(x_m,\overline{x}_m)$ in \red{the} tuple $(\red{B_{0,1}}\l(x_1,\overline{x}_1),B_{1,1},\dots, B_{1,d_1}\l\cdots\l (x_m,\overline{x}_m),B_{m,1},\dots,B_{m,d_m})\!\cdot\sigma$. \red{Note that} the cube \red{in~(\ref{cico}) is} product-oriented (as required by Proposition~\ref{productsinTn}), and that \red{(\ref{cico}) agrees} with the gradient-oriented cube
$$
\left(\rule{0mm}{4mm}\red{B_{0,1}}\l(x_1,\overline{x}_1),B_{1,1},\cdots,B_{1,d_1}\l\cdots\l (x_m,\overline{x}_m),B_{m,1},\cdots,B_{m,d_m}\right)\cdot\sigma,
$$
since $x_1<\cdots<x_m$. \red{This proves the first half of the next generalization of Proposition~\ref{underneathphi}:}

\begin{proposition}\label{interactionlengths}
\red{The product~$(\ref{aproduct})$} is represented in $C^*(\DnT)$ by the gradient-oriented cocycle 
\begin{equation}\label{gradientrepresentative}
\sum\!\left(\red{B_{0,1}}\ll(x_1,\overline{x}_1),B_{1,1},\ldots,B_{1,d_1}\ll\cdots\ll (x_m,\overline{x}_m),B_{m,1},\ldots,B_{m,d_m}\rule{0mm}{4mm}\right)\!\cdot\sigma,
\end{equation}
where the summation runs over all permutations $\sigma\in\Sigma_n$ and all possible tuples $B_{\red{t,\ell}}$ of vertices written in $T$-order, taken from the corresponding \red{pruned} trees $T_{\red{t,\ell}}$, \red{and having the following lengths:} Any block $\red{B_{0,1}}$ \red{must have} $\mathcal{R}_0$ ingredients, while any block $B_{\red{t,\ell}}$ \red{with $t>0$ must have} $\mathcal{P}_{\red{t,\ell}}$ ingredients for $1 \leq \red{\ell} \leq \red{r_t}$, and $\mathcal{Q}_{\red{t,\ell-r_t}}$ ingredients for $\red{r_t}< \red{\ell} \leq \red{d_t}$. In particular, $(\ref{aproduct})$ vanishes provided \red{its factors do not interact.}
\end{proposition}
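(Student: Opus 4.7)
Since the derivation of the form~(\ref{gradientrepresentative}) of the representing cocycle was completed in the paragraphs immediately preceding the statement, what remains is to verify the block-length requirements and then derive the vanishing assertion. The plan is to perform a counting argument, factor by factor, that pins down $b_{t,\ell}:=|B_{t,\ell}|$ as a function of the data $(k_i,p_i,q_i)$ and of the combinatorics of the $L_{i,\ell}$'s, and then to recognize the outcome as Definition~\ref{interactionparameters}.

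Fix a non-vanishing summand of~(\ref{gradientrepresentative}) and consider the representative~(\ref{elrepresentative}) of the $i$-th factor. Among its $n-1$ vertex ingredients, Proposition~\ref{productsinTn} forces the presence of ``landmark'' vertices $x_j$ (for $j>i$) and $\overline{x}_j$ (for $j<i$), each lying in the component of $T\setminus\{x_i\}$ in the direction from $x_i$ toward $x_j$; the remaining ingredients populate the blocks $B_{t,\ell}$. For each direction $\ell'$ from $x_i$, the total ingredient count of factor $i$ in that direction is $k_i$ if $\ell'=0$, $p_{i,\ell'}$ if $1\leq\ell'\leq r_i$, or $q_{i,\ell'-r_i}$ otherwise. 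Subtracting off the number of landmarks in that direction and splitting the remainder according to the decomposition of the $x_i$-direction-$\ell'$ component under removal of the remaining $x_j$'s, one obtains, for each $i$, a system of equalities relating the $b_{t,\ell}$'s to the data of factor $i$. Combining this with the alignment constraint $b^i_{t,\ell}=b^j_{t,\ell}$ forced by Proposition~\ref{productsinTn} and with the uniqueness of $(t,\ell)$ such that $x_j\in L_{t,\ell}$ (Remark~\ref{separticionaadecuadamente}) produces a triangular system whose solution is $b_{0,1}=\mathcal{R}_0$, $b_{t,\ell}=\mathcal{P}_{t,\ell}$ for $1\leq\ell\leq r_t$, and $b_{t,r_t+\ell''}=\mathcal{Q}_{t,\ell''}$, matching Definition~\ref{interactionparameters}.

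The genuine bookkeeping work lies in tracking which landmark vertex ($x_j$ versus $\overline{x}_j$) sits in which $T_{t,\tilde\ell}$, and in verifying that the recursive counts telescope into precisely the correction factor $-(n-k_j)$ appearing in each interaction parameter. I expect this to be the main obstacle, though the triangular structure and Remark~\ref{separticionaadecuadamente} should make it manageable. Once the block-length formulas are established, the vanishing assertion is immediate: if the factors do not interact, then at least one of $\mathcal{R}_0,\mathcal{P}_{t,\ell},\mathcal{Q}_{t,\ell}$ is negative, which precludes any choice of a block of that length, so the sum~(\ref{gradientrepresentative}) is empty and the product vanishes.
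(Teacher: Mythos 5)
Your approach is sound in spirit and recovers the same block-size arithmetic by a different route: where the paper runs an induction on $m$ (multiplying the $m$-factor cocycle by one additional factor, locating the tree $T_{t,\ell}$ that contains the new edge $(x_{m+1},\overline{x}_{m+1})$, and observing $|B'_{t,\ell}| = |B_{t,\ell}| - (n-k_{m+1})$), you fix a nonzero summand and set up a simultaneous per-factor, per-direction ingredient count. The paper's induction buys automatic bookkeeping: at each step there is exactly one block whose size changes, the change is a single subtraction, and consistency of the system never needs to be addressed because the cocycle is built constructively. Your direct count buys a more symmetric presentation, but at the cost of having to handle an over-determined system (one equation per pair $(i,\ell')$, far fewer unknowns $b_{t,\ell}$) whose consistency and unique solvability are not obvious. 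The ``triangular structure'' and the assertion that the counts ``telescope into precisely the correction factor $-(n-k_j)$'' are the crux, and you defer them explicitly; as written these are claims, not arguments. To close the gap you would need to exhibit a concrete order on the blocks in which each $b_{t,\ell}$ is determined by a single factor's count together with already-solved deeper blocks, verify the resulting expression equals $p_{t,\ell}+\sum_{x_j\in L_{t,\ell}}(k_j-n)$, and check that the remaining unused equations are satisfied. (In the $m=2$ case this works out cleanly; in general it is exactly the bookkeeping the paper's induction absorbs into $|B'_{t,\ell}| = |B_{t,\ell}| - (n-k_{m+1})$.) Your treatment of the vanishing assertion, and the observation that all admissible blocks occur once the sizes are known, are both correct and match the paper.
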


Note that $\mathcal{R}_0+\sum_{i,\red{\ell}}\mathcal{P}_{i,\red{\ell}}+\sum_{i,\red{\ell}}\mathcal{Q}_{i,\red{\ell}}=n-m$ in Definition \ref{interactionparameters}. This is compatible with the fact that cubes in (\ref{gradientrepresentative}), \red{if any,} have $n$ ingredients. See also Corollary \ref{interactionproducts} below. 

\begin{proof}
\red{It remains to prove the assertions about the sizes of blocks $B_{t,\ell}$, and that all possible such blocks appear in~(\ref{gradientrepresentative}). As for the sizes, proceeding by} induction on $m$ (with Proposition~\ref{underneathphi} grounding the argument), it suffices to consider a product \red{$\pi_1\cdot\pi_2$ with}
\begin{equation}\label{aditionalfactor}\begin{split}
\pi_1&=\left[\rule{0mm}{5mm}\left(\red{B_{0,1}}\ll(x_1,\overline{x}_1),B_{1,1},\ldots,B_{1,d_1}\ll\cdots\ll(x_m,\overline{x}_m),B_{m,1},\ldots,B_{m,d_m})\right)\cdot\sigma\,\right],\\\pi_2&=\left[\rule{0mm}{5mm}\left(U\ll(x_{m+1},\overline{x}_{m+1}),V_1,\ldots,V_{d_{m+1}}\right)\cdot\sigma'\,\right],
\end{split}\end{equation}
where $x_1<\cdots <x_m<x_{m+1}$, and \red{where the structure} of the blocks $B_{\red{t,\ell}}$ is as specified in the proposition. \red{Here we are assuming \red{({\em a})} that $U$ is a tuple of} $k_{m+1}$ \red{vertex} ingredients \red{written in $T$-order and lying in $x_{m+1}$-direction 0, ({\em b}) that any tuple} $V_{\red{\ell}}$ \red{with $1 \leq \red{\ell} \leq \red{r_{m+1}}$ consists of} $p_{m+1,\red{\ell}}$ \red{vertex} ingredients \red{written in $T$-order and lying in $x_{m+1}$-direction~$\ell$, and ({\em c}) that any tuple} $V_{\red{\ell+r_{m+1}}}$ \red{\!with $1 \leq \red{\ell} \leq s_{m+1}$ consists of} $q_{m+1,\red{\ell}}$ \red{vertex} ingredients \red{written in $T$-order and lying in $x_{m+1}$-direction $\ell+r_{m+1}$. In addition, we make the conventions $d_{m+1}:=d(x_{m+1})-1$ and $\overline{x}_{m+1}:=x_{m+1}[r_{m+1}+1]$, and assume the relations $d_{m+1}=r_{m+1}+s_{m+1}$, $r_{m+1}\geq1\leq s_{m+1}$ and $k_{m+1}+\sum_{\ell=1}^{r_{m+1}} p_{m+1,\ell}+\sum_{\ell=1}^{s_{m+1}} q_{m+1,\ell}=n-1$. Furthermore,} signs and orientations \red{will be} ignored in the rest of the proof, \red{as they} have been carefully \red{addressed} in the discussion \red{previous to this proposition.} In particular, we can safely \red{work at the unordered-cube level, thus ignoring the} permutations $\sigma$ and $\sigma'$ in (\ref{aditionalfactor}) \red{and, instead, thinking of tuples of ingredients as sets of ingredients.}

Consider the pruned trees $T_{\red{t,\ell}}:=\red{T_{t,\ell}(x_1,\ldots,x_m)}$ and $T'_{\red{t,\ell}}:=\red{T_{t,\ell}(x_1,\ldots,x_{m+1})}$, as well as the pruned leaves $L_{\red{t,\ell}}:=\red{L_{t,\ell}(x_1,\ldots,x_m)}$ and $L'_{\red{t,\ell}}:=L_{t,\ell}(x_1,\ldots,x_{m+1})$. There are three cases, depending on whether the edge $(x_{m+1},\overline{x}_{m+1})$ belongs to $T_{0,1}$, \red{or to} $T_{\red{t,\ell}}$ with \red{$1\leq t\leq m$ and} $1 \leq \red{\ell} \leq \red{r_t}$, or \red{to} $T_{\red{t,\ell}}$ with \red{$1\leq t\leq m$ and} $\red{r_t}<\red{\ell\leq d_t}$, and the argument is virtually identical in each. We consider only the situation depicted in Figure~\ref{caso2}, where \red{the edge} $(x_{m+1},\overline{x}_{m+1})$ belongs to $T_{\red{t,\ell}}$ \red{for some $t\in\{1,2,\ldots,m\}$ and some $\ell\in\{1,2,\ldots,r_t\}$. In such a} case we have
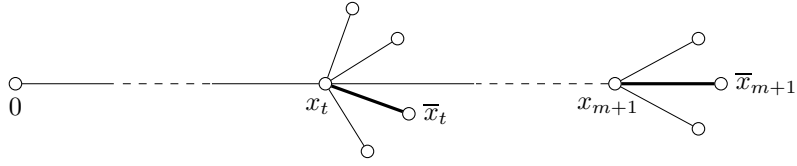
\begin{figure}
	\centering
	\begin{tikzpicture}[scale=1.0, rotate = 180, xscale = -1]
		\node[style={circle, draw, scale=.5}] (1) at ( 1.62, 3.2) {};
		\node at ( 1.62, 3.5) {$0$};
		\node[scale=.1] (2) at ( 2.94, 3.2) {};
		\node[scale=.1] (3) at ( 4.22, 3.2) {};
		\node[style={circle, draw, scale=.5}] (4) at ( 5.74, 3.2) {};
		\node at (5.64, 3.5) {$\red{x_t}$};
		\node[style={circle, draw, scale=.5}] (5) at ( 6.1, 2.2) {};
		\node[style={circle, draw, scale=.5}] (6) at ( 6.7, 2.6) {};
		\node[scale=.1] (7) at ( 7.73, 3.2) {};
		\node[style={circle, draw, scale=.5}] (8) at ( 6.85, 3.6) {};
		\node[style={circle, draw, scale=.5}] (9) at ( 6.3, 4.1) {};
		\node[style={circle, draw, scale=.5}] (10) at ( 9.59, 3.2) {};
		\node at ( 9.5, 3.5) {$x_{m+1}$};
		\node[style={circle, draw, scale=.5}] (11) at ( 10.7, 2.6) {};
		\node[style={circle, draw, scale=.5}] (12) at ( 11, 3.2) {};
		\node[style={circle, draw, scale=.5}] (13) at ( 10.7, 3.8) {};
		\draw[very thin] (2) -- (1);
		\draw[dashed, very thin] (3) -- (2);
		\draw[very thin] (4) -- (3);
		\draw[very thin] (7) -- (4);
		\draw[dashed, very thin] (10) -- (7);
		\draw[very thick] (12)node[right]{\red{$\,\overline{x}_{m+1}$}} -- (10);
		\draw[very thin] (11) -- (10);
		\draw[very thin] (13) -- (10);
		\draw[very thin] (5) -- (4);
		\draw[very thin] (6) -- (4);
		\draw[very thick] (8)node[right]{\red{\,$\overline{x}_t$}} -- (4);
		\draw[very thin] (9) -- (4);
	\end{tikzpicture}
	\caption{The edge $(x_{m+1},\overline{x}_{m+1})$ belongs to $T_{\red{t},3}$, so \red{the path from $x_t$ to $x_{m+1}$ does not pass through an essential vertex $x_j$}}
\label{caso2}
\end{figure}
\begin{itemize}
\item[(i)] $T_{\red{\tau},\red{\lambda}}=T'_{\red{\tau},\red{\lambda}}$ and $L_{\red{\tau},\red{\lambda}}=L'_{\red{\tau},\red{\lambda}}$, \red{for $1\leq \tau\leq m$ as long as $\tau\neq t$ or $\lambda\neq\ell$;}
\item[(ii)] $T_{\red{t},\red{\ell}}\setminus\text{Int}(x_{m+1},\overline{x}_{m+1})=T'_{\red{t},\red{\ell}}\,\bigsqcup \left( \bigcup\limits_{\red{\lambda}=1}^{d_{m+1}} T'_{m+1,\red{\lambda}}\right)$;
\item[(iii)] $L'_{\red{t},\red{\ell}}=L_{\red{t},\red{\ell}}\cup\{x_{m+1}\}$ and $L'_{m+1,\red{\lambda}}=\varnothing$ for $\red{\lambda} \in \{1, \dots , d_{m+1}\}$.
\end{itemize}

\medskip
By Proposition~\ref{productsinTn}, \red{the product $\pi_1\cdot\pi_2$ of the elements in}~(\ref{aditionalfactor}) vanishes unless
$$
\left(\{\overline{x}_1,\ldots,\overline{x}_m\}\sqcup \red{B_{0,1}}\sqcup\left(\,\bigsqcup_{\substack{\red{1\leq\tau\leq m}\\\red{1\leq\lambda\leq d_\tau}}} \!\!B_{\red{\tau},\red{\lambda}}\right) \!\right) \setminus  B_{\red{t},\red{\ell}}\subseteq U, \qquad
\{x_{m+1}\}\sqcup \left( \bigsqcup_{\red{\lambda=1}}^{d_{m+1}} V_{\red{\lambda}} \right) \subseteq B_{\red{t},\red{\ell}}
$$
and
$$
U\setminus\left(\!\left(\{\overline{x}_1,\ldots,\overline{x}_m\}\sqcup \red{B_{0,1}}\sqcup\left(\,\bigsqcup_{\substack{\red{1\leq\tau\leq m}\\\red{1\leq\lambda\leq d_\tau}}} \!\!B_{\red{\tau},\red{\lambda}}\right) \!\right) \setminus B_{\red{t},\red{\ell}} \right) = B_{\red{t},\red{\ell}}\setminus \left(\{x_{m+1}\}\sqcup \left( \bigsqcup_{\red{\lambda=1}}^{d_{m+1}} V_{\red{\lambda}} \right)\!\right)=:B'_{t,\ell},
$$
in which case $$\red{\pi_1\cdot\pi_2=(B_{0,1}\l(x_1,\overline{x}_1),\mathcal{B}_1\l\cdots\l(x_m,\overline{x}_m),\mathcal{B}_m\l (x_{m+1},\overline{x}_{m+1}),V_1,\ldots,V_{d_{m+1}}),}$$ where $\mathcal{B}_\tau$ is a shorthand for the sequence $B_{\tau,1},\ldots,B_{\tau,d_{\tau}}$ provided $\tau\neq t$, whereas $\mathcal{B}_t$ stands for the sequence $$B_{t,1},\ldots,B_{t,\ell-1},B'_{t,\ell},B_{t,\ell+1},\ldots,B_{t,d_t}.$$ The induction is complete \red{in view of items (i)--(iii) above and}
\begin{align*}
\l B'_{\red{t},\red{\ell}} \l&=\l B_{\red{t},\red{\ell}} \l-\left(1+\sum_{\red{\lambda}=1}^{\red{r_{m+1}}}p_{m+1,\red{\lambda}}+\sum_{\red{\lambda}=1}^{\red{s_{m+1}}}q_{m+1,\red{\lambda}}\right)\\&=\hspace{.8mm}p_{\red{t},\red{\ell}}\hspace{.5mm}+\!\!\sum_{x_{\red{\lambda}}\in L_{\red{t},\red{\ell}}}\!\!(k_{\red{\lambda}}-n)\hspace{.3mm}-\hspace{.3mm}(n-k_{m+1})\hspace{.8mm}=\hspace{.8mm}p_{\red{t},\red{\ell}}\hspace{.5mm}+\!\!\sum_{x_{\red{\lambda}}\in L'_{\red{t},\red{\ell}}}\!\!(k_{\red{\lambda}}-n),
\end{align*}
\red{which shows that $B'_{\red{t},\red{\ell}}$ has the prescribed cardinality.}
\red{The inductive analysis makes it clear also that all blocks $B_{t,\ell}$ with the structure indicated in the proposition indeed appear in~(\ref{gradientrepresentative}).}
\end{proof}

\begin{corollary}\label{interactionproducts}
The product~$(\ref{aproduct})$ agrees with the basis element $\{\mathcal{R}_0\l x_1,\mathcal{P}_{1},\mathcal{Q}_{1}\l\cdots\l x_m,\mathcal{P}_{m},\mathcal{Q}_{m}\}$ provided the factors of~$(\ref{aproduct})$ interact strongly.
\red{Recall} $\mathcal{P}_i= (\mathcal{P}_{i,1}, \mathcal{P}_{i,2}, \dots , \mathcal{P}_{i,\red{r_i}})$ and $\mathcal{Q}_i= (\mathcal{Q}_{i,1}, \mathcal{Q}_{i,2}, \dots , \mathcal{Q}_{i,\red{s_i}})$.
\end{corollary}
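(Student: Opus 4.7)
The plan is to match the cocycle representative of~(\ref{aproduct}) produced by Proposition~\ref{interactionlengths} against a cocycle representative of the critical class $C:=\{\mathcal{R}_0\l x_1,\mathcal{P}_1,\mathcal{Q}_1\l\cdots\l x_m,\mathcal{P}_m,\mathcal{Q}_m\}$ obtained via $\overline{\Phi}\circ\pi^*$. If these two cocycles agree in $C^*(\DnT)$, then by Remark~\ref{extensiondenotacion} the product~(\ref{aproduct}) agrees with $\{C\}$ in $\image$, as required.

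First I would verify that strong interaction makes $C$ a bona fide critical $m$-cube, checking items \emph{(a)}--\emph{(e)} preceding~(\ref{criticalrepresentatives}): items \emph{(a)}--\emph{(c)} follow from $\mathcal{R}_0,\mathcal{P}_{i,\ell},\mathcal{Q}_{i,\ell}\geq 0$; item \emph{(d)} translates into the condition ``$\mathcal{P}_i>0$ for every $i$'', which is exactly the additional strict-inequality clause of Definition~\ref{interactioncomplex}(c) encoded in strong interaction; item \emph{(e)} follows from the counting identity $\mathcal{R}_0+\sum_{i,\ell}\mathcal{P}_{i,\ell}+\sum_{i,\ell}\mathcal{Q}_{i,\ell}=n-m$ noted immediately after Proposition~\ref{interactionlengths}.

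Next I would establish the $m$-dimensional analog of Proposition~\ref{underneathphi}, namely that $\overline{\Phi}\circ\pi^*(\{C\})$ equals the sum~(\ref{gradientrepresentative}). The argument parallels the proof of Proposition~\ref{underneathphi} verbatim: write $\overline{\Phi}\circ\pi^*(\{C\})=\sum_{\gamma\in\mathcal{G}}\mu(\gamma)\cdot\mathcal{S}_\gamma$, split $\mathcal{G}=\mathcal{G}'\sqcup(\mathcal{G}\setminus\mathcal{G}')$ where $\mathcal{G}'$ consists of upper paths whose elementary factors are all of falling-vertex type, and observe that the involution of Proposition~\ref{underneathphi} ---flipping the end-point replacement made in the last non-falling elementary factor--- pairs elements of $\mathcal{G}\setminus\mathcal{G}'$ sharing a common starting cube but with opposite multiplicities. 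Paths in $\mathcal{G}'$ carry multiplicity~$1$, and their origin cubes $\mathcal{S}_\gamma$ are, by the dynamics generalising Example~\ref{dynamicsofupperpathsindim1}, precisely the cubes appearing in~(\ref{gradientrepresentative}).

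The main obstacle is the dynamics step: verifying that the forced falling of unblocked vertices, carried out around each critical edge $(x_i,\overline{x}_i)$ in $T$-order, distributes the $n-m$ vertex-ingredients of the starting cube into the pruned trees $T_{i,\ell}$ with exactly the cardinalities $\mathcal{R}_0,\mathcal{P}_{i,\ell},\mathcal{Q}_{i,\ell}$, thereby reproducing the block structure of~(\ref{gradientrepresentative}). This reduces to iterating the two-option branching analysis of Example~\ref{dynamicsofupperpathsindim1} sequentially around each critical edge $(x_i,\overline{x}_i)$; it is a direct, if tedious, combinatorial extension. The systematic use of gradient orientations sidesteps any sign bookkeeping, so the argument reduces to a matching dictated solely by the $T$-order.
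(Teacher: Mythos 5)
Your plan is sound but takes a genuinely different route from the paper. The paper applies $\underline{\Phi}$ of~(\ref{quasi-isomorphisms}) to the cocycle representative~(\ref{gradientrepresentative}) furnished by Proposition~\ref{interactionlengths}, and disposes of the computation with a single observation: under strong interaction, a summand of~(\ref{gradientrepresentative}) that is reached by a lower gradient path emanating from a critical $m$-cube is forced to be critical itself, with block structure exactly that of $\{\mathcal{R}_0\l x_1,\mathcal{P}_1,\mathcal{Q}_1\l\cdots\l x_m,\mathcal{P}_m,\mathcal{Q}_m\}$, so the lower path is constant. You instead run the correspondence in the opposite direction, computing $\overline{\Phi}\circ\pi^*$ of the candidate critical class via upper paths and matching the result against~(\ref{gradientrepresentative}). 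That requires an $m$-dimensional analogue of Proposition~\ref{underneathphi}, together with a generalization of the upper-path dynamics of Example~\ref{dynamicsofupperpathsindim1} to critical cells carrying several order-disrespectful edges; the paper never proves either, precisely because the $\underline{\Phi}$-based argument makes them unnecessary. Your route is not circular and should work — the cancellation involution carries over, and the fact that falling-vertex-only elementary factors preserve all edge-ingredients guarantees that surviving origin cubes carry exactly the edges $(x_1,\overline{x}_1),\ldots,(x_m,\overline{x}_m)$, after which the block-size count is forced by the stacking — but you are in effect re-deriving the content of Proposition~\ref{interactionlengths} a second time by a different method, whereas the paper uses that proposition as a black box and closes the argument in one line. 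The paper's approach is therefore substantially more economical; the modest payoff of yours is that it would yield an $m$-dimensional version of Proposition~\ref{underneathphi} as a byproduct.
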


\begin{proof}
\red{By the strong interaction hypothesis, a summand in (\ref{gradientrepresentative}) that is the target of a lower gradient path $\gamma$ must actually be critical (and $\gamma$ must be constant) with ingredients equal to those associated to $\{\mathcal{R}_0\l x_1,\mathcal{P}_{1},\mathcal{Q}_{1}\l\cdots\l x_m,\mathcal{P}_{m},\mathcal{Q}_{m}\}$.} The conclusion then follows from (\ref{quasi-isomorphisms}) and (\ref{simpleform}).
\end{proof}

\begin{lemma}\label{systemofequations}
\red{Fix} essential vertices $x_1<\cdots<x_m$ \red{and take positive integer numbers $r_i$ and $s_i$ with $r_i+s_i=d(x_i)-1$ for $1\leq i\leq m$. Let} $R_0$, $P_{\red{i,\ell}}$, $Q_{\red{i,k}}$, \red{with} $ 1 \leq \red{i} \leq m$, $1 \leq \red{\ell} \leq r_{\red{i}}$ \red{and} $ 1 \leq \red{k} \leq s_{\red{i}}$,  be non-negative integers satisfying $n-\red{m}=R_0+\sum_{i=1}^m\hspace{-.2mm}\left(\hspace{.4mm}\sum_{\red{\ell}=1}^{r_{\red{i}}}P_{i,\red{\ell}}+\sum_{k=1}^{s_{\red{i}}} Q_{i,k}\hspace{.4mm}\right)$. Then the system
\begin{align*}
n\,+\!\sum_{x_j\in L_{0,1}}(k_j-n)&=R_0,\\
p_{i,\red{\ell}}\,+\!\sum_{x_j\in L_{i,\red{\ell}}}(k_j-n)&=P_{i,\red{\ell}}\quad (i=1,\ldots,m, \;\;\red{\ell}=1,\dots, r_{\red{i}}), \\
q_{i,\red{k}}\,+\hspace{-2mm}\sum_{x_j\in L_{i,\red{k+r_i}}}\hspace{-2mm}(k_j-n)&=Q_{i,\red{k}}\quad (i=1,\ldots,m, \;\;\red{k}=1,\dots, s_{\red{i}}),
\end{align*}
has a unique solution of non-negative integer numbers $\{k_i,p_{i,1}, \dots ,p_{i, r_{\red{i}}}, q_{i,1}, \dots , q_{i, s_{\red{i}}} \}_{i=1}^m$  satisfying the condition $\red{n-1}=k_i+\sum_{\red{\ell}=1}^{r_{\red{i}}} p_{i,\red{\ell}}+ \sum_{\red{k}=1}^{s_{\red{i}}}q_{i,\red{k}}$ \red{\,for each $i\in\{1,\ldots,m\}$. If, in addition,} for each $\red{i\in\{1,\ldots,m\}}$ there exists \red{$\ell\in\{1,\ldots,r_i\}$} with $ P_{\red{i,\ell}}>0$, then \red{the unique solution satisfies that, for each $i\in\{1,\dots,m\}$, there exists} $\ell\in\{1, \dots r_{\red{i}}\}$ \red{with} $p_{\red{i,\ell}}>0$.
\end{lemma}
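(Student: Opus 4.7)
The plan is to eliminate the $p_{i,\ell}$ and $q_{i,k}$ from the system, reducing it to a bottom-up recursion on a ``skeleton tree'' determined by the essential vertices, and then deduce non-negativity from a closed-form expression for each $k_j$. By Remark~\ref{separticionaadecuadamente}, the sets $L_{0,1}$ and $L_{i,\ell}$ (for $1\leq i\leq m$, $1\leq \ell\leq d_i$) partition $\{x_1,\ldots,x_m\}$; setting $L_{i,\ast}:=\bigsqcup_{\ell}L_{i,\ell}$ and declaring the unique $i\in\{0,1,\ldots,m\}$ with $x_j\in L_{i,\ast}$ to be the parent of $j$ (with $x_0:=0$ as the root) yields a rooted tree on $\{0,1,\ldots,m\}$ whose children of $i$ are precisely the indices $j$ with $x_j\in L_{i,\ast}$.

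Next, I would sum the $p_{i,\ell}$- and $q_{i,k}$-equations and plug in the constraint $k_i+\sum_{\ell}p_{i,\ell}+\sum_{k}q_{i,k}=n-1$ to obtain, for $1\leq i\leq m$, the recurrence
$$
k_i \;=\; A_i\;+\sum_{x_j\in L_{i,\ast}}(k_j-n),
\qquad
A_i \;:=\; n-1-\sum_{\ell=1}^{r_i}P_{i,\ell}-\sum_{k=1}^{s_i}Q_{i,k}.
$$
This determines each $k_i$ uniquely as an integer by working from the leaves of the skeleton tree upward (where $k_i=A_i$); the $p_{i,\ell}$ and $q_{i,k}$ are then read off from their defining equations, the per-$i$ constraints $k_i+\sum_\ell p_{i,\ell}+\sum_k q_{i,k}=n-1$ hold by construction, and a direct summation over the children of~$0$, together with the global sum hypothesis, shows that equation~$0$ is automatically satisfied as well. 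This yields uniqueness and existence of an integer solution.

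The only remaining issue, non-negativity, follows from the closed form
$$
k_i \;=\; n-|S_i|-\sum_{j\in S_i}\Bigl(\textstyle\sum_{\ell=1}^{r_j}P_{j,\ell}+\sum_{k=1}^{s_j}Q_{j,k}\Bigr),
$$
where $S_i$ is the set of descendants of $i$ in the skeleton tree (including $i$ itself), proved by induction on depth using the recurrence above. Because $R_0,P_{i,\ell},Q_{i,k}\geq 0$ and their total equals $n-m$ by hypothesis, the right-hand summand is at most $n-m$; combined with $|S_i|\leq m$, this yields $k_i\geq m-|S_i|\geq 0$. Moreover $|S_i|\geq 1$ forces $k_i\leq n-1$, so $n-k_j\geq 1$ for every $j$, and the identity
$$
p_{i,\ell} \;=\; P_{i,\ell}+\sum_{x_j\in L_{i,\ell}}(n-k_j)
$$
(together with its analog for $q_{i,k}$) gives $p_{i,\ell}\geq P_{i,\ell}\geq 0$ and $q_{i,k}\geq Q_{i,k}\geq 0$. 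The final clause of the lemma is then immediate: if $P_{i,\ell}>0$, the same $\ell$ satisfies $p_{i,\ell}\geq P_{i,\ell}>0$. The main obstacle is identifying the skeleton-tree recursion; once that is in place, the unfolding and the non-negativity estimate are routine.
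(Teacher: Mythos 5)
Your proof is correct, and it follows a genuinely different route from the paper's. The paper argues by downward induction on $m$: since $x_m$ is the largest essential vertex, $L_{m,\ell}=\varnothing$ for every $\ell$, so the equations for $i=m$ immediately give $p_{m,\ell}=P_{m,\ell}$ and $q_{m,k}=Q_{m,k}$, hence $k_m=n-1-\sum_\ell P_{m,\ell}-\sum_k Q_{m,k}$; one then checks $k_m\geq 0$ directly from the sum hypothesis, removes $x_m$, absorbs the quantity $n-k_m$ into whichever single adjusted parameter $R_0'$, $P'_{i,\ell}$, or $Q'_{i,k}$ has $x_m$ in its pruned-leaf set, verifies the new parameters remain non-negative with total $n-(m-1)$, and recurses. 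Your argument instead packages the partition of $\{x_1,\ldots,x_m\}$ into the $L_{i,\ell}$'s (Remark~\ref{separticionaadecuadamente}) as a rooted skeleton tree, sums the defining equations at each node to obtain the recurrence $k_i=A_i+\sum_{x_j\in L_{i,\ast}}(k_j-n)$, and solves it bottom-up, which yields the closed form $k_i=n-|S_i|-\sum_{j\in S_i}(\sum_\ell P_{j,\ell}+\sum_k Q_{j,k})$. Non-negativity then comes in one shot from $|S_i|\leq m$ and $\sum_{j}(\sum_\ell P_{j,\ell}+\sum_k Q_{j,k})=n-m-R_0\leq n-m$. I verified the details: the per-$i$ constraint $k_i+\sum_\ell p_{i,\ell}+\sum_k q_{i,k}=n-1$ holds by construction once $k_i$ is determined; the $0$-equation becomes $\sum_{x_j\in L_{0,1}}(k_j-n)=\sum_i A_i-mn=R_0-n$ after summing over all $i$, which is exactly what is needed; the bounds $k_i\geq m-|S_i|\geq 0$ and $k_j\leq n-1$ are sound, and the latter gives $p_{i,\ell}=P_{i,\ell}+\sum_{x_j\in L_{i,\ell}}(n-k_j)\geq P_{i,\ell}$, settling both the remaining non-negativity claims and the final clause. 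The trade-off: the paper's step-by-step reduction keeps everything within a one-vertex-at-a-time framework (matching the inductive style used in Proposition~\ref{interactionlengths}), whereas your approach buys an explicit closed form for $k_i$ at the cost of the skeleton-tree bookkeeping, and the non-negativity estimate becomes a single global inequality rather than a per-step check.
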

\begin{proof}
The two sets of equations with $i=m$ reduce to $p_{m,\red{\ell}}=P_{m,\red{\ell}}$ ($\red{\ell}=1,2, \dots, r_{\red{m}}$) and $q_{m,\red{k}}=Q_{m,\red{k}}$ ($\red{k}=1,2, \dots, s_{\red{m}}$). \red{This also determines}
\begin{equation}\label{determines}
k_m:=n-\sum_{\red{\ell}=1}^{r_{\red{m}}}P_{m,\red{\ell}}-\sum_{\red{k}=1}^{s_{\red{m}}}Q_{m,\red{k}}-1=R_0+\sum_{j=1}^{m-1}\left(\sum_{\red{\ell}=1}^{r_{\red{j}}}P_{j,\red{\ell}}+\sum_{k=1}^{s_{\red{j}}}Q_{j,k}+1\right)\geq0.
\end{equation}
The rest of the equations can be written as
\begin{align*}
n\,+\!\!\sum_{x_j\in L_{0,1}\setminus\{x_m\}}\!\!\!(k_j-n)&=R'_0:=R_0\red{{}+{}}\!\left.\begin{cases} n-k_m, & \mbox{if $x_m\in L_{0,1}$} \\ 0, & \mbox{otherwise} \end{cases}\right\},\\
p_{i,\red{\ell}}\,+\!\!\sum_{x_j\in L_{i,\red{\ell}}\setminus\{x_m\}}\!\!\!(k_j-n)&=P'_{i,\red{\ell}}:=P_{i,\red{\ell}}\red{{}+{}}\!\left.\begin{cases} n-k_m, & \mbox{if $x_m\in L_{i,\red{\ell}}$} \\ 0, & \mbox{otherwise} \end{cases}\right\},\\
q_{i,\red{k}}\,+\!\!\!\!\!\!\sum_{x_j\in L_{i,\red{k}+r_{\red{i}}}\setminus\{x_m\}}\!\!\!\!\!\!\!(k_j-n)&=Q'_{i,\red{k}}:=Q_{i,\red{k}}\red{{}+{}}\!\left.\begin{cases} n-k_m, & \mbox{if $x_m\in L_{i,\red{k}+r_{\red{i}}}$} \\ 0, & \mbox{otherwise} \end{cases}\right\},
\end{align*}
\red{for} $\,i=1,\ldots,m-1$, $\,\red{\ell}=1,\dots, r_{\red{i}}\,$ \red{and} $\,\red{k}=1,\dots, s_{\red{i}}$. \red{The result then follows by induction since}
$$
R'_0+\sum_{j=1}^{m-1}\left(\sum_{\red{\ell}=1}^{r_{\red{j}}}P'_{j,\red{\ell}}+\sum_{k=1}^{s_{\red{j}}}Q'_{j,k}+1\right)=R_0+\sum_{j=1}^{m-1}\left(\sum_{\red{\ell}=1}^{r_{\red{j}}}P_{j,\red{\ell}}+\sum_{k=1}^{s_{\red{j}}}Q_{j,k}+1\right)\!\red{{}+{}}n-\red{k}_m $$ $$
=R_0+\sum_{j=1}^{m}\left(\sum_{\red{\ell}=1}^{r_{\red{j}}}P_{j,\red{\ell}}+\sum_{k=1}^{s_{\red{j}}}Q_{j,k}+1\right)=n,
$$
\red{where the second equality uses~(\ref{determines}).}
\end{proof}

\begin{proof}[Proof of Theorem~\ref{casogeneral}]
Corollary~\ref{interactionproducts} and Lemma~\ref{systemofequations} \red{yield} a set theoretic identification \red{$\mathcal{S}_m=\mathcal{B}_m$, where $\mathcal{S}_m$ is the set of products~(\ref{aproduct}) whose factors interact strongly, and $\mathcal{B}_m$ is} the $m$-dimensional basis of $\image$ with basis elements $\{R_0\l x_1,\red{(P_{1,1},\ldots,P_{1,r_1})},\red{(Q_{1,1},\ldots,Q_{1,s_1})}\l\cdots\l x_m,\red{(P_{m,1},\ldots,P_{m,r_m})},\red{(Q_{m,1},\ldots,Q_{m,s_m})}\}$. Together with Corollary~\ref{trivialsquares} and Proposition~\ref{interactionlengths}, this completes the proof, where $\langle k,x,p,q\rangle\in V_nT$ is identified with (the $\pi^*$-preimage of) $\{k\l x,p,q\}\in\image$.
\end{proof}

Note that the cohomology ring $H^*(\UDnT)$ is generated by 1-dimensional classes, a fact already known from~\cite{MR2355034}. It is not true that a product \red{(\ref{aproduct})} vanishes when its factors interact but non-strongly. The description of such products relies on the dynamics of lower gradient paths.

\section{Cup products II: Lower gradient paths}\label{lgp}
\red{Let $\Pi_1$ stand for a product~$(\ref{aproduct})$ whose factors interact strongly, so Corollary~\ref{interactionproducts} applies. Choose an additional 1-dimensional basis element $\{k_x\l x,(p_{x,1},\ldots,p_{x,r_x}),(q_{x,1},\ldots,q_{x,s_x})\}$ of $\hspace{.4mm}\image$ with $x<x_1<\cdots<x_m$ and where the standard conditions and conventions are assumed, namely, 
\begin{equation}\label{namely}
p_x:=(p_{x,1},\ldots,p_{x,r_x})>0 \ \ \mbox{and} \ \  q_x:=(q_{x,1},\ldots,q_{x,s_x})\geq0,
\end{equation}
where $r_x\geq1\leq s_x$, $r_x+s_x=d_x:=d(x)-1$, $|p_x|:=\sum_{\ell=1}^{r_x}p_{x,\ell}$, $|q_x|:=\sum_{\ell=1}^{s_x}q_{x,\ell}$, $k_x+|p_x|+|q_x|=n-1$ and $\overline{x}:=x[r_x+1]$. Consider the interaction parameters $P_{i}:=\mathcal{P}_{i}(x_1,\ldots,x_m)$ and $Q_{i}:=\mathcal{Q}_{i}(x_1,\ldots,x_m)$ of the factors of $\Pi_1$ ($i\in\{1,\ldots, m\}$), as well as the first three interaction parameters $R_0:=\mathcal{R}_0(x,x_1,\ldots,x_m)$, $P_x:=\mathcal{P}_1(x,x_1,\ldots,x_m)$ and $Q_x:=\mathcal{Q}_1(x,x_1,\ldots,x_m)$ of the factors of $\Pi_2:=\{k_x\l x,p_x,q_x\}\cdot\Pi_1$. This section is devoted to proving:}
\begin{theorem}\label{productsvialowerpaths}
In the situation above, if the factors of $\Pi_2$ interact but non-strongly, then
\begin{align}
\Pi_2&=-\sum_{\red{a}}\left\{R_0-|\red{a}|\ll x,\red{a},Q_x\ll x_1, P_{1},Q_{1}\ll\cdots\ll x_m, P_{m},Q_{m}\right\}\label{laquefaltaba}\\ 
&\;\;\;\;{}+\sum_{\ell=1}^{s_x-1}\sum_{a,b}\left\{R_0-|a|-b-1\ll x,Q_x^{(\ell,a,b)},Q_x^{(\ell,+)}\ll x_1, P_{1},Q_{1}\ll\cdots\ll x_m, P_{m},Q_{m}\right\}\label{dosmas}\\
&\;\;\;\;{}-\sum_{\ell=1}^{s_x-1}\sum_{a,b}\left\{R_0-|a|-b\ll x,Q_x^{(\ell,a,b)},Q_x^{(\ell,-)}\ll x_1, P_{1},Q_{1}\ll\cdots\ll x_m, P_{m},Q_{m}\right\}\label{unomas}.
\end{align}
In the above expression we set $a:=(a_1,\ldots,a_{r_x})$, $|a|:=a_1+\cdots+a_{r_x}$,
$Q_x^{(\ell,+)}:=(Q_{x,\ell+1},Q_{x,\ell+2},\ldots,Q_{x,s_x})$,
$Q_x^{(\ell,-)}:=(Q_{x,\ell+1}-1,Q_{x,\ell+2},\ldots,Q_{x,s_x})$ and
$Q_{x}^{(\ell,a,b)}:=(a_1,\ldots,a_{r_x},Q_{x,1}+b+1,Q_{x,2},\ldots,Q_{x,\ell})$.
The summation in~(\ref{laquefaltaba}) runs over all $r_x$-tupes $a$ of non-negative integer numbers satisfying $1\leq|\red{a}|\leq R_0$. The inner summation in~(\ref{dosmas}) runs over all $r_x$-tupes $a$ of non-negative integer numbers and all non-negative integer numbers~$b$ satisfying $|a|+b<R_0$. The inner summation in~(\ref{unomas}) is empty if $Q_{x,\ell+1}=0$, otherwise it runs over all $r_x$-tupes $a$ of non-negative integer numbers and all non-negative integer numbers $b$ satisfying $|a|+b\leq R_0$.
\end{theorem}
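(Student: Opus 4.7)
My plan is to chase the Morse diagram of Lemma~\ref{keydiagram} applied to $\Pi_2$. Proposition~\ref{interactionlengths} furnishes an explicit cocycle $z\in C^*(\DnT)$ representing $\Pi_2$, whose summands are gradient-oriented cubes built from blocks $B_{t,\ell}$ of prescribed sizes $\mathcal{R}_0$, $\mathcal{P}_{t,\ell}$, $\mathcal{Q}_{t,k}$. Because the factors of $\Pi_2$ interact weakly but not strongly, the block template does not match that of a critical cube (the $\mathcal{P}_1$-tuple at $x$ is zero, since the new interaction around $x$ forces equality in~(\ref{localinteraction})). Consequently the summands of $z$ are not themselves Morse-critical. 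To extract the Morse representative, I would apply $\underline{\Phi}$ from~(\ref{quasi-isomorphisms}), writing
\[
\Pi_2\;=\;\sum_A\Bigl(\sum_{c,\,\gamma\in\underline{\Gamma}(A,c)}\mu(\gamma)\Bigr)\,A,
\]
where $A$ ranges over critical $(m+1)$-cubes and $c$ over the summands of $z$.

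The key technical step is the dynamical analysis of such lower gradient paths, running backwards from Example~\ref{dynamicsofupperpathsindim1} and Corollary~\ref{dynamics}. An elementary lower step $A\searrow d$ replaces one edge-ingredient $(x_i,\overline{x}_i)$ of $A$ by $x_i$, and the subsequent Farley-Sabalka step $d\nearrow c_1$ adds an edge back at the smallest non-critical ingredient of $d$. Because $\Pi_1$ interacts strongly, each critical cube among the $x_i$-blocks admits only minor rearrangement; almost all of the dynamics is concentrated around the new essential vertex $x$, whose $p_x$-tuple is too small to be critical on its own and must be replenished by vertices ``coming down'' either from the root stack $B_{0,1}$ (accounting for the parameter $a$, with $|a|$ vertices transferred and the root block shrinking to size $R_0-|a|$) or from positions in the $q_x$-block (accounting for the parameters $\ell$ and $b$, with the distinction between the $Q_x^{(\ell,+)}$ and $Q_x^{(\ell,-)}$ reshapings coming from whether the edge $(x,x[r_x+\ell+1])$ is broken to $x$ or to its other endpoint at the decisive branching step). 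These three exclusive patterns correspond to the three summation families~(\ref{laquefaltaba})--(\ref{unomas}).

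The cancellation of all other lower paths will be organized by an involution in the spirit of the proofs of Propositions~\ref{underneathphi} and~\ref{trivialmorsediff}. Concretely, I would isolate the ``first non-forced branching'' of a lower path: a point at which an edge can be broken to either of its two endpoints, giving rise to two paths that share the same source and target while their multiplicities have opposite signs by~(\ref{multiplicitydefinition}). Paths without such a free choice are the rigid ones that survive, and a careful enumeration will bijectively match them with the triples $(\ell,a,b)$ (or plain $a$) in the prescribed ranges, while combining the Morse multiplicities with the product-orientation signs $(-1)^{\varepsilon_{c,d}}$ of Proposition~\ref{productsinTn} to produce the $-,+,-$ signs of~(\ref{laquefaltaba}), (\ref{dosmas}), (\ref{unomas}) respectively. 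The principal obstacle is the bookkeeping needed to prove that the involution is well defined and fixed-point-free on the non-rigid paths, and to verify that the indexing conditions $1\leq|a|\leq R_0$, $|a|+b<R_0$, and $|a|+b\leq R_0$ (and the emptiness when $Q_{x,\ell+1}=0$) correspond exactly to the feasibility constraints on the rigid surviving paths.
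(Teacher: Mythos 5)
Your plan is structurally the same as the paper's: take the gradient-oriented cocycle of Proposition~\ref{interactionlengths}, push it through $\underline{\Phi}$, analyze the lower gradient paths, and eliminate the spurious ones by sign-cancelling involutions in the spirit of Propositions~\ref{underneathphi} and~\ref{trivialmorsediff}. You have also correctly located the parameters $a$, $\ell$, $b$ and the role of the $\delta_1$-vs-$\delta_2$ choice at the lock. Two refinements you will need. First, before any path analysis the paper pins down the local geometry: since the factors interact but not strongly one gets $P_x=0$, while $p_x>0$ is built into the definition of a basis element, and together these hypotheses rule out three of the four possible configurations of $x$ relative to $x_1$ (Figure~\ref{four}); only after this reduction is the closing branch-type pairing~(\ref{endingpairing}) forced to occur in $x$-direction $r_x+1$, which is what makes the subsequent classification of paths tractable. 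You should make this reduction explicit rather than fold it into ``dynamics concentrated around $x$.'' Second, your intuition that the surviving paths are the ``rigid'' ones (those with no free branching) is not quite right: every surviving path $\lambda^-_{a,0}$, $\kappa^\pm_{\ell,a,b}$ in the paper's argument \emph{does} make a lock-opening choice $\delta_1$ or $\delta_2$; they survive not because they have no companion but because the companion path (taking the other face at the lock, or at $b=0$ the nonexistent $\lambda^+_{a,-1}$) fails to reach a summand of the cocycle via falling-vertex steps. Relatedly, the paper's cancelling involution on the remaining paths is keyed on the \emph{last} upper elementary factor that is not of falling-vertex type (together with a separate ``fall'' involution on the first step), not on a ``first non-forced branching,'' and this choice of last-versus-first is what makes the involution well-defined. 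Finally, a small but real point: the product-orientation signs $(-1)^{\varepsilon_{c,d}}$ have already been absorbed into the gradient-oriented cocycle of Proposition~\ref{interactionlengths}, so the $-,+,-$ signs in the theorem come purely from the Morse multiplicities~(\ref{multiplicitydefinition}) at the opening and closing lock steps; you should not invoke $\varepsilon_{c,d}$ again at this stage.
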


Since summands in \red{(\ref{laquefaltaba})--(\ref{unomas})} are basis elements, Theorem~\ref{productsvialowerpaths} and the results in the previous section give a \red{recursive method} to effectively asses cup-products in $\image\cong H^*(B_nT)$.

\begin{proof}[Proof of Theorem~\ref{productsvialowerpaths} (preparation)]
We have seen that $\Pi_2$ is represented in $C^*(\DnT)$ by the gradient-oriented cocycle
\begin{equation}\label{elmeromero}
\sum\left(\red{B_{0,1}}\ll(x,\overline{x}),\red{B_{x,1},\ldots,B_{x,d_x}}\ll(x_1,\overline{x}_1),B_{1,1},\ldots,B_{1,d_1}\ll\cdots\ll(x_m,\overline{x}_m),B_{m,1},\ldots,B_{m,d_m}\right)\cdot\sigma,
\end{equation}
where the summation runs over all permutations $\sigma\in\Sigma_n$ and over all possible blocks $B_{\ast,\ast}$ of vertices written in $T$-order, taken from the corresponding trees $T_{\ast,\ast}$ determined by the \red{factors of $\Pi_2$,} and having sizes as prescribed in Proposition~\ref{interactionlengths} in terms of the relevant interaction parameters. \red{The goal now is to identify the $\underline{\Phi}$-image of~(\ref{elmeromero}) which, by~(\ref{quasi-isomorphisms}), is the element} in $\mathcal{M}^*(\DnT)$
\begin{equation}\label{pi}
\sum_{\gamma\in\mathcal{G}}\mu(\gamma)\cdot\mathcal{S}_\gamma.
\end{equation}
Here $\mathcal{G}$ is the set of lower paths $\gamma$ starting at an $(m+1)$-critical cube $\mathcal{S}_\gamma$ and finishing at a summand of~(\ref{elmeromero}). \red{We start by identifying (in the next two paragraphs) key characteristics of ending cubes for paths in~$\mathcal{G}$.}

\red{Firstly,} the condition $x<x_1$ forces one of the four configurations \red{depicted} in Figure~\ref{four}. In any \red{of those configurations,} vertices $x_i$ with $i>1$ lie either on \red{a} component of $T\setminus\{x_1\}$ in \red{positive} $x_1$-direction or, else, ``below'' the horizontal segment joining the root and $x_1$. \red{As a result, the equalities $P_{i}=\mathcal{P}_{i}(x_1,\ldots,x_m)=\mathcal{P}_{i+1}(x,x_1,\ldots,x_m)$ and $Q_{i}=\mathcal{Q}_{i}(x_1,\ldots,x_m)=\mathcal{Q}_{i+1}(x,x_1,\ldots,x_m)$ hold for $i=1,\ldots ,m$. The interaction hypotheses then yield
\begin{equation}\label{px0}
P_x=0,
\end{equation}
the $r_x$-tuple consisting of zeros. This and~(\ref{namely}) rule out} the two configurations on the right of Figure~\ref{four}, as well as \red{the one on the} bottom left, \red{since the equality $P_x=p_x$ is forced for those configurations.} The only possible configuration, \red{i.e., the one on the top left of Figure~\ref{four}, will be assumed in the rest of the section.}

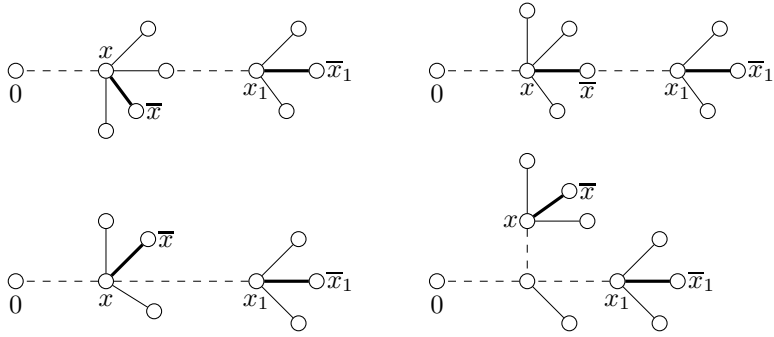
\begin{figure}
	\centering
	\begin{tikzpicture}[scale=.8, rotate = 180, xscale = -1]
		\node[style={circle, draw, scale=.6}] (1) at ( -4, 1.0) {};
		\node[style={circle, draw, scale=.6}] (2) at ( -2.5, 1) {};
		\node[style={circle, draw, scale=.6}] (3) at ( -2.5, 2) {};
		\node[style={circle, draw, scale=.6}] (4) at (-1.8,0.3) {};
		\node[style={circle, draw, scale=.6}] (5) at ( -1.5, 1) {};
		\node[style={circle, draw, scale=.6}] (6) at ( -2, 1.67) {};
		\node[style={circle, draw, scale=.6}] (7) at ( 0, 1) {};
		\node[style={circle, draw, scale=.6}] (8) at ( .7, .3) {};
		\node[style={circle, draw, scale=.6}] (9) at ( 1, 1) {};
		\node[style={circle, draw, scale=.6}] (10) at ( .5, 1.67) {};
		\node[style={circle, draw, scale=.6}] (11) at ( -4.0, 4.5) {};
		\node[style={circle, draw, scale=.6}] (12) at ( -2.5, 4.5) {};
		\node[style={circle, draw, scale=.6}] (50) at ( -1.7, 5) {};
		\node[style={circle, draw, scale=.6}] (13) at ( -2.5, 3.5) {};
		\node[style={circle, draw, scale=.6}] (14) at ( -1.8, 3.8) {};
		\node[style={circle, draw, scale=.6}] (15) at ( 0, 4.5) {};
		\node[style={circle, draw, scale=.6}] (16) at ( .7, 3.8) {};
		\node[style={circle, draw, scale=.6}] (17) at ( .7, 5.2) {};
		\node[style={circle, draw, scale=.6}] (18) at ( 1, 4.5) {};
		\node[style={circle, draw, scale=.6}] (22) at ( 3, 4.5) {};
		\node[style={circle, draw, scale=.6}] (23) at ( 4.5, 4.5) {};
		\node[style={circle, draw, scale=.6}] (24) at ( 4.5, 3.5) {};
		\node[style={circle, draw, scale=.6}] (25) at ( 5.2, 3) {};
		\node[style={circle, draw, scale=.6}] (26) at ( 4.5, 2.5) {};
		\node[style={circle, draw, scale=.6}] (27) at ( 5.5, 3.5) {};
		\node[style={circle, draw, scale=.6}] (28) at ( 6, 4.5) {};
		\node[style={circle, draw, scale=.6}] (29) at ( 6.7, 5.2) {};
		\node[style={circle, draw, scale=.6}] (30) at ( 7, 4.5) {};
		\node[style={circle, draw, scale=.6}] (31) at ( 6.7, 3.8) {};
		\node[style={circle, draw, scale=.6}] (32) at ( 5.2, 5.2) {};
		\node[style={circle, draw, scale=.6}] (41) at ( 3, 1.0) {};
		\node[style={circle, draw, scale=.6}] (42) at ( 4.5, 1) {};
		\node[style={circle, draw, scale=.6}] (43) at ( 4.5, 0) {};
		\node[style={circle, draw, scale=.6}] (44) at (5.2,0.3) {};
		\node[style={circle, draw, scale=.6}] (45) at ( 5.5, 1) {};
		\node[style={circle, draw, scale=.6}] (46) at ( 5, 1.67) {};
		\node[style={circle, draw, scale=.6}] (47) at ( 7, 1) {};
		\node[style={circle, draw, scale=.6}] (48) at ( 7.7, .3) {};
		\node[style={circle, draw, scale=.6}] (49) at ( 8, 1) {};
		\node[style={circle, draw, scale=.6}] (40) at ( 7.5, 1.67) {};
	       \draw[dashed, very thin] (42) -- (41)node[below]{\raisebox{-3mm}{$0$}};
		\draw[very thin] (43) -- (42)node[below]{\raisebox{-2mm}{$x$}};
		\draw[very thin] (44) -- (42);
		\draw[very thin] (46) -- (42);
		\draw[very thick] (45)node[below]{$\overline{x}$} -- (42);
		\draw[very thin, dashed] (47) -- (45);
		\draw[very thin] (48) -- (47)node[below]{\raisebox{-2mm}{$x_1$}};
		\draw[very thick] (49)node[right]{$\overline{x}_1$} -- (47);
		\draw[very thin] (40) -- (47);
		\draw[dashed, very thin] (2) -- (1)node[below]{\raisebox{-3mm}{$0$}};
		\draw[very thin] (3) -- (2)node[above]{\raisebox{.5mm}{$x$}};
		\draw[very thin] (4) -- (2);
		\draw[very thin] (5) -- (2);
		\draw[very thick] (6)node[right]{$\overline{x}$} -- (2);
		\draw[dashed, very thin] (7) -- (5);
		\draw[very thin] (8) -- (7)node[below]{\raisebox{-2mm}{$x_1$}};
		\draw[very thick] (9)node[right]{$\overline{x}_1$} -- (7);
		\draw[very thin] (10) -- (7);
		\draw[dashed, very thin] (12) -- (11)node[below]{\raisebox{-3mm}{$0$}};
		\draw[dashed, very thin] (15) -- (12)node[below]{\raisebox{-2mm}{$x$}};
		\draw[very thin](13) -- (12);
		\draw[very thin](50) -- (12);
		\draw[very thick]  (14)node[right]{$\overline{x}$} -- (12);
		\draw[very thin] (16) -- (15)node[below]{\raisebox{-2mm}{$x_1$}};
		\draw[very thin] (17) -- (15);
		\draw[very thick] (18)node[right]{$\overline{x}_1$} -- (15);
		\draw[very thin, dashed] (23) -- (22)node[below]{\raisebox{-3mm}{$0$}};
		\draw[very thin, dashed] (24)node[left]{$x$} -- (23);
		\draw[very thin] (27) -- (24);
		\draw[very thick] (25)node[right]{$\overline{x}$} -- (24);
		\draw[very thin] (26) -- (24);
		\draw[dashed, very thin] (28) -- (23);
		\draw[very thin] (29) -- (28)node[below]{\raisebox{-2mm}{$x_1$}};
		\draw[very thin] (31) -- (28);
		\draw[very thick] (30)node[right]{$\overline{x}_1$} -- (28);
		\draw[very thin] (32) -- (23);
	\end{tikzpicture}
	\caption{The four possible configurations with $x<x_1$}
\label{four}
\end{figure}

\red{Secondly, redundant summands in~(\ref{elmeromero}) can be neglected, as none of those can be the destination of a lower path. Furthermore, (\ref{px0}) shows that no summand in~(\ref{elmeromero}) is critical. We thus focus on collapsible summands in~(\ref{elmeromero}) which (in addition to their size and distribution properties summarized at the start of the proof) are forced to satisfy the following two properties: For one,} ingredients of each \red{$B_{0,1}$ that are} smaller than~$\red{x}$ form a stack of vertices blocked by the root of $T$. \red{In addition,} for any \red{$i\in\{1,2,\ldots,m\}$ with} $\overline{x}_i$ smaller than $\overline{x}$, all ingredients of each $B_{i,\red{\ell}}$ \red{($1\leq\ell\leq d_i$)} are blocked (this uses \red{the fact that $P_{i}$ is not the zero tuple}), so \red{the} tuple $((x_i,\overline{x}_i),B_{i,1},\dots, B_{i,d_i})$ \red{assembles a (unique, by block-size limitations) critical situation around $x_i$.} It follows that \red{each} summand \red{$c\cdot\sigma$} in~(\ref{elmeromero}) \red{relevant for~(\ref{pi})} \red{is} collapsible by a branch-type pairing \red{that creates the edge $(x,\overline{x})$, as depicted in}
\begin{equation}\label{endingpairing}
\raisebox{-10mm}{
	\begin{tikzpicture}[scale=1.0, rotate = 180, xscale = -1]
		\node[style={circle, draw, scale=.5}] (1) at ( -5, 1.0) {};
		\node at ( -5, 1.3) {$0$};
		\node[style={circle, draw, scale=.5}] (2) at ( -3.5, 1) {};
		\node[style={circle, draw, scale=.5}] (3) at ( -3.5, 1.9) {};
		\node[style={circle, draw, scale=.5}] (4) at (-2.8,0.3) {};
		\node[style={circle, draw, scale=.5}] (5) at ( -2.5, 1) {};
		\node[fill=black,style={circle, draw, scale=.5}] (6) at ( -3, 1.67) {};
		\node[fill=black,style={circle, draw, scale=.5}] (7) at ( -1, 1) {};
		\node at ( -1.1, 1.3) {$x_1$};
		\node[style={circle, draw, scale=.5}] (8) at ( -.3, .3) {};
		\node[fill=black,style={circle, draw, scale=.5}] (9) at ( 0, 1) {};
		\node[style={circle, draw, scale=.5}] (10) at ( -.5, 1.67) {};
		\node[style={circle, draw, scale=.5}] (41) at ( 3, 1.0) {};
		\node at ( 3, 1.3) {$0$};
		\node[fill=black,style={circle, draw, scale=.5}] (42) at ( 4.5, 1) {};
		\node at ( 4.4, .7) {$x$};
		\node[style={circle, draw, scale=.5}] (43) at ( 4.5, 1.9) {};
		\node[style={circle, draw, scale=.5}] (44) at (5.2,0.3) {};
		\node[style={circle, draw, scale=.5}] (45) at ( 5.5, 1) {};
		\node[fill=black,style={circle, draw, scale=.5}] (46) at ( 5, 1.67) {};
		\node[fill=black,style={circle, draw, scale=.5}] (47) at ( 7, 1) {};
		\node at ( 6.85, .7) {$x_1$};
		\node[style={circle, draw, scale=.5}] (48) at ( 7.7, .3) {};
		\node[fill=black,style={circle, draw, scale=.5}] (49) at ( 8, 1) {};
		\node[style={circle, draw, scale=.5}] (40) at ( 7.5, 1.67) {};
		\node (11) at (1.1,1.5){};
		\node (12) at (2.35,0.25){};
		\draw[thick,->] (11) -- (12);
	    \draw[very thin, dashed] (42) -- (41);
		\draw[very thin] (43) -- (42);
		\draw[very thin] (44) -- (42);
		\draw[very thick] (46)node[right]{$\hspace{.3mm}\overline{x}$} -- (42);
		\draw[very thin] (45) -- (42);
		\draw[dashed, very thin] (47) -- (45);
		\draw[very thin] (48) -- (47);
		\draw[very thick] (49)node[right]{$\hspace{.3mm}\red{\overline{x}_1.}$} -- (47);
		\draw[very thin] (40) -- (47);
		\draw[dashed, very thin] (2) -- (1);
		\draw[very thin] (3) -- (2);
		\draw[very thin] (4) -- (2);
		\draw[very thin] (5) -- (2);
		\draw[very thin] (6)node[right]{$\hspace{.3mm}\overline{x}$} -- (2);
		\draw[dashed, very thin] (7) -- (5);
		\draw[very thin] (8) -- (7);
		\draw[very thick] (9)node[right]{\red{$\overline{x}_1$}} -- (7);
		\draw[very thin] (10) -- (7);
	\end{tikzpicture}}
\end{equation}

\red{Note that any $(m+1)$-cube $c_0$ that has been identified on the right of~(\ref{endingpairing}) as a potential destination of a path $\gamma\in\mathcal{G}$} supports a gradient path $\lambda\colon\red{c_0}\searrow c_1\nearrow\cdots\searrow c_t\hspace{.5mm}$ with $c_t$ a critical $m$-cube. For instance, \red{start by replacing} the edge $(x_1,\overline{x}_1)$ in $\red{c_0}$ by $x_1$, and let the rest of the path consist of falling-vertex elementary factors. It follows that the concatenation of $\gamma$ and $\lambda$ and, therefore, $\gamma$ itself obey the rule in Corollary \ref{dynamics}: any upper elementary factor of sor type is of falling-vertex type. Such a fact, \red{together with cancellation phenomena similar to the one in the proof of Proposition~\ref{underneathphi}, is used in the rest of the argument in order to analyze paths determining}~(\ref{pi}). As in the proof of Proposition~\ref{trivialmorsediff}, the analysis \red{can equivalently be done} at the level of $C^*(\UDnT)$, which means that \red{an ordered cube} $c\cdot\sigma$ can be replaced by \red{the corresponding} orbit $\{c\}$. \red{Following the lead in} Proposition \ref{underneathphi}, we \red{first identify} the \red{actual} sets of paths whose contribution in~(\ref{pi}) give (\ref{laquefaltaba})--(\ref{unomas}).

\red{The summation  in~(\ref{laquefaltaba}) arises from a set} $\mathcal{L}^-_0\subset\mathcal{G}$ of paths \red{having} a single ``lock'' dynamics. Explicitly, \red{each $r_x$-tuple $a$ of non-negative integer numbers satisfying $0<|a|\leq R_0$ determines a lower gradient path $\red{\lambda^-_{a,0}}\in\mathcal{L}^-_0$ that departs} from the critical $(m+1)$-cube
$$
\left\{R_0-|\red{a}| \ll x,\red{a},Q_{x}\ll x_1,P_{1},Q_{1}\ll\ldots\ll x_m,P_{m},Q_{m}\right\}
$$
\red{by} replacing the edge $(x,\overline{x})$ by $\overline{x}$ ---this opens the lock. Then $\red{\lambda^-_{a,0}}$ continues with the falling of the $|\red{a}|$ vertices \red{that were} blocked by $x$, after which $\red{\lambda^-_{a,0}}$ ends with the pairing that closes the lock by creating the edge $(x,\overline{x})$ \red{required} in (\ref{endingpairing}). \red{Since both opening and closing locks} are associated to the same face (the gradient-orientated $\delta_2$-face), and since falling-vertex elementary paths have multiplicity 1, we see from (\ref{multiplicitydefinition}) that $\mu(\red{\lambda^-_{a,0}})=-1$. \red{Thus, $\mathcal{L}^-_0\subseteq\mathcal{G}$} yields (\ref{laquefaltaba}).

The set of paths $\red{\mathcal{L}_0^-}$ is contained in a slightly larger subset $\mathcal{L}^{-}\subset\mathcal{G}$ \red{which consists of paths $\lambda_{a,b}^-$, where $a$ runs over $r_x$-tuples of non-negative integer numbers and $b$ runs over non-negative integers numbers satisfying} $|a|>0$ and $ |a|+\red{b}\leq R_0$. \red{Explicitly, $\lambda_{a,b}^-$} starts by taking face $\delta_2$ (lock opening) of \red{the} critical $(m+1)$-cube
$$
\left\{R_0-|a|-\red{b}\ll x,a,\red{Q_{x}+\red{(b,0,\ldots,0)}}\ll x_1,P_{1},Q_{1}\ll\ldots\ll x_m,P_{m},Q_{m}\right\}.
$$
Here and below we take the coordinate-wise sum of tuples. Then \red{$\lambda_{a,b}^-$ continues with the falling of} the $|a|$ vertices that were blocked by~$x$, followed \red{(if $b>0$)} by the falling of the \red{$b$} vertices \red{$S_b(\overline{x})$,} to finish with the falling of \red{$\overline{x}+b$} until it creates the required branch-type pairing (\ref{endingpairing}) ---which closes the lock. As in the case of~$\red{\mathcal{L}^-_0},$ paths in $\mathcal{L}^{-}$ have multiplicity $-1$. Likewise, there is the family $\mathcal{L}^{+}\subset\mathcal{G}$ consisting of paths \red{$\lambda_{a,b}^+$, with $a$ and $b$ as above, except that the inequality $|a|+b\leq R_0$ is replaced by the strict inequality $|a|+b<R_0$. Explicitly, $\lambda_{a,b}^+$ starts} by taking face $\delta_1$ (inverse lock opening) of \red{the} critical $(m+1)$-cube
$$
\left\{R_0-|a|-\red{b-1}\ll x,\red{a},Q_{x}+(\red{b+1,0,\ldots,0})\ll x_1,P_{1},Q_{1}\ll\ldots\ll x_m,P_{m},Q_{m}\right\}.
$$
Then \red{$\lambda_{a,b}^+$ continues with the falling of~$x$, followed by the falling of the} $|a|$ vertices \red{that were blocked by $x$, followed (if $b>0$) by the falling of} the~$\red{b}$ vertices \red{$S_{\overline{x}+1}(b)$,} to finish with the falling of \red{$\overline{x}+b+1$} until it creates the required branch-type pairing (\ref{endingpairing}) ---which closes the lock. Note that paths in $\mathcal{L}^{+}$ have multiplicity $+1$.

\medskip
\red{Figure~\ref{figchart} summarizes dynamics of paths in $\mathcal{L}^-$ (top) and paths in $\mathcal{L}^+$ (bottom), with lock opening/closing represented by arrows. Note the shifting on the $b$ vertices falling from $x$-direction $r_x+1$, as well as on the vertices that make up $B_{x,r_x+1}$ at the end of the path. The key point is that, if $b>0$, the paths $\lambda^-_{a,b}$ and $\lambda^+_{a,b-1}$} share origen, so their contributions in (\ref{pi}) cancel each other out. \red{The only} unmatched paths are those in $\mathcal{L}^{-}$ \red{with} parameter $b=0$, i.e., paths in~$\red{\mathcal{L}^-_0,}$ whose contribution in (\ref{pi}) has been shown to yield~(\red{\ref{laquefaltaba}}).

\begin{figure}[h!]
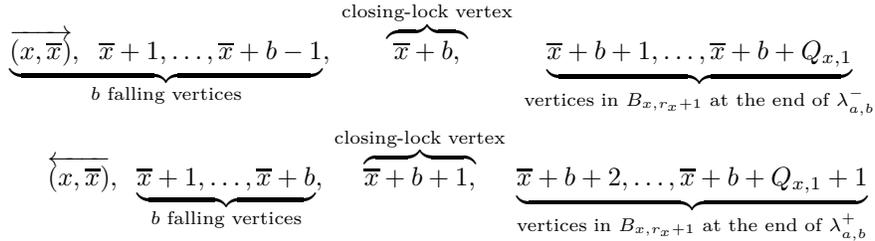

\begin{align*}
\underbrace{\overrightarrow{(x,\overline{x})},\hspace{2mm}\overline{x}+1,\ldots,\overline{x}+b-1}_{\text{$b$ falling vertices}},&\;\overbrace{\;\overline{x}+b,\;}^{\text{closing-lock vertex}}\;\underbrace{\overline{x}+b+1,\ldots,\overline{x}+b+Q_{x,1}}_{\text{vertices in $B_{x,r_x+1}$ at the end of $\lambda_{a,b}^-$}} \\
\overleftarrow{(x,\overline{x})},\hspace{2mm}\underbrace{\overline{x}+1,\ldots,\overline{x}+b}_{\text{$b$ falling vertices}},\;&\overbrace{\overline{x}+b+1,}^{\text{closing-lock vertex}}\;\underbrace{\overline{x}+b+2,\ldots,\overline{x}+b+Q_{x,1}+1}_{\text{vertices in $B_{x,r_x+1}$ at the end of $\lambda_{a,b}^+$}}
\end{align*}
\caption{Dynamics of paths in $\mathcal{L}^-$ (top) and $\mathcal{L}^+$ (bottom)}
\label{figchart}
\end{figure}

\red{By construction, $\mathcal{L}^-\cup\mathcal{L}^+$ consists of those paths in $\mathcal{G}$ that start by taking a face $\delta_i$ with $i=1,2$ of a critical $(m+1)$-cube with edges $$(x,\overline{x}), (x_1,\overline{x}_1),\ldots,(x_m,\overline{x}_m),$$ and that evolve exclusively though falling-vertex elementary paths before reaching the required pairing~(\ref{endingpairing}). Next we describe similar sets of paths contributing in~(\ref{pi}) with~(\ref{dosmas}) and~(\ref{unomas}). In such sets of paths, an edge
\begin{equation}\label{roler}
(x,x[r+1]) \mbox{ \ \ with \ }r\neq r_x
\end{equation}
plays the role of the edge $(x,\overline{x})=(x,x[r_x+1])$ in $\mathcal{L}^\pm$.}

\medskip\noindent
\red{{\bf Paths $\mathcal{K}_\ell^-$ with $1\leq\ell\leq s_x-1$} ($r=r_x+\ell$, in the notation of~(\ref{roler})): If $Q_{x,\ell+1}=0$, set $\mathcal{K}^-_\ell=\varnothing$, otherwise $\mathcal{K}^-_\ell$ consists of paths $\kappa^-_{\ell,a,b}\in\mathcal{G}$, where $a$ runs over $r_x$-tuples of non-negative integer numbers and $b$ runs over non-negative integer numbers satisfying $|a|+b\leq R_0$. Explicitly, if $a=(a_1,\ldots,a_{r_x})$, then $\kappa^-_{\ell,a,b}$ starts by taking face $\delta_2$ of the critical $(m+1)$-cube
\begin{align*}
\left\{R_0-|a|-\red{b}\ll\right.\hspace{-.8mm}x,(a_1,\ldots,a_{r_x},&Q_{x,1}+b+1,Q_{x,2},\ldots,Q_{x,\ell}),\\&(Q_{x,\ell+1}-1,Q_{x,\ell+2},\ldots,Q_{x,s_x})\ll x_1,P_{1},Q_{1}\ll\ldots\left.\ll x_m,P_{m},Q_{m}\right\},
\end{align*}
and evolves through falling-vertex elementary paths as depicted by the chart\footnote{As in the case of $\mathcal{L}^\pm$, the $|a|$ vertices falling from $x$-directions 1 through $r_x$ are not shown in the chart.}
\begin{align*}
\overrightarrow{(x,x[r+1])},\hspace{2mm}\underbrace{\overline{x},\ldots,\overline{x}+b-1}_{\text{$b$ falling vertices}},&\;\overbrace{\;\overline{x}+b,\;}^{\text{closing-lock vertex}}\;\underbrace{\overline{x}+b+1,\ldots,\overline{x}+b+Q_{x,1}}_{\text{vertices in $B_{x,r_x+1}$ at the end of $\kappa_{\ell,a,b}^-$}}
\end{align*}
before reaching the required pairing~(\ref{endingpairing}). Both opening and closing locks of $\kappa^-_{\ell,a,b}$ are associated to a (gradient-oriented) $\delta_2$ face, so that $\mu(\kappa^-_{a,b})=-1$. The contribution in~(\ref{pi}) of the paths in $\mathcal{K}^-_1\cup\cdots\cup\mathcal{K}^-_{s_x-1}$ thus gives raise to~(\ref{unomas}). Note that no path that starts from the origin of a given $\kappa^-_{a,b}$ by taking face $\delta_1$ ---instead of $\delta_2$---, and that evolves through falling-vertex elementary paths, can arrive to a summand of~(\ref{elmeromero}). This is why the contribution to~(\ref{pi}) of the set of paths in the next paragraph does not cancel out terms in~(\ref{unomas}).}

\medskip\noindent
\red{{\bf Paths $\mathcal{K}_\ell^+$ with $1\leq\ell\leq s_x-1$} ($r=r_x+\ell$, in the notation of~(\ref{roler})): $\mathcal{K}^+_\ell$ consists of paths $\kappa^+_{\ell,a,b}\in\mathcal{G}$, where $a$ runs over $r_x$-tuples of non-negative integer numbers and $b$ runs over non-negative integer numbers satisfying $|a|+b<R_0$. Explicitly, if $a=(a_1,\ldots,a_{r_x})$, then $\kappa^+_{\ell,a,b}$ starts by taking face $\delta_1$ of the critical $(m+1)$-cube
\begin{align*}
\left\{R_0-|a|-b-1\ll\right.\hspace{-.8mm}x,(a_1,\ldots,a_{r_x},&Q_{x,1}+b+1,Q_{x,2},\ldots,Q_{x,\ell}),\\&(Q_{x,\ell+1},Q_{x,\ell+2},\ldots,Q_{x,s_x})\ll x_1,P_{1},Q_{1}\ll\ldots\left.\ll x_m,P_{m},Q_{m}\right\},
\end{align*}
and evolves through falling-vertex elementary paths as depicted by the chart
\begin{align*}
\underbrace{\overleftarrow{(x,x[r+1])},\hspace{2mm}\overline{x},\ldots,\overline{x}+b-1}_{\text{$b+1$ falling vertices}},&\;\overbrace{\;\overline{x}+b,\;}^{\text{closing-lock vertex}}\;\underbrace{\overline{x}+b+1,\ldots,\overline{x}+b+Q_{x,1}}_{\text{vertices in $B_{x,r_x+1}$ at the end of $\kappa_{\ell,a,b}^+$}}
\end{align*}
before reaching the required pairing~(\ref{endingpairing}). Now $\mu(\kappa^+_{a,b})=1$, so the contribution in~(\ref{pi}) of the paths in $\mathcal{K}^+_1\cup\cdots\cup\mathcal{K}^+_{s_x-1}$ gives raise to~(\ref{dosmas}). Again, no path that starts from the origin of a given $\kappa^+_{a,b}$ by taking face $\delta_2$ ---instead of $\delta_1$---, and that evolves through falling-vertex elementary paths, can arrive to a summand of~(\ref{elmeromero}).}

\begin{remark}\label{casoespecial}{\em
\red{Since the closing-lock pairing~(\ref{endingpairing}) must come from $x$-direction $r_x+1$, paths corresponding to cases with $r<r_x$ in~(\ref{roler}) have no contribution in~(\ref{pi}). Specifically, any path $\gamma\in\mathcal{G}$ that starts from a critical cell with edges $(x,x[r+1]),(x_1,\overline{x}_1),\ldots,(x_m,\overline{x}_m)$, where $r<r_x$, by taking a face $\delta_i$ with $i=1,2$, and that reaches the pairing~(\ref{endingpairing}) through falling-vertex elementary paths, has a companion path $\gamma'$ that starts from the same critical cell by taking the face $\delta_{3-i}$, and that also evolves through falling-vertex elementary paths until it reaches the closing-lock pairing~(\ref{endingpairing}) ---so that $\mu(\gamma')=-\mu(\gamma)$ and $(\gamma')'=\gamma$. Note that, in the ordered setting, $\gamma$ and its companion path $\gamma'$ arrive to summands of~(\ref{elmeromero}) whose ingredients differ only by a permutation (so $\gamma'\in\mathcal{G}$ as well). The phenomenon noticed in this remark is in fact the key to finishing the proof of the main result in this section.}
}\end{remark}

\noindent\red{\emph{Proof of Theorem~\ref{productsvialowerpaths} (conclusion).} Let $\mathcal{J}$ stand for the set of paths analyzed up to this point, i.e., the paths in $\mathcal{G}$ that (I) depart} from a critical $(m+1)$-cube \red{with gradient-ordered} edges $(x,\red{x[\ell]}),(x_1,\overline{x}_1),\ldots,(x_m,\overline{x}_m)$, (II) \red{start by taking} the face $\delta_1$ or $\delta_2$ and (III)~\red{reach} the ending branch-type pairing (\ref{endingpairing}) \red{exclusively through} falling-vertex elementary paths. It suffices to construct an involution \red{$\iota\colon \mathcal{G}'\to\mathcal{G}'$, with $\mathcal{G}':=\mathcal{G}\setminus\mathcal{J}$,} such that each pair of paths $\gamma$ and $\iota(\gamma)$ share origin and have opposite multiplicity. \red{With this in mind, we first note that} condition (II) is forced by conditions (I) and (III). \red{Indeed,} in any gradient path $e\searrow e'\nearrow\cdots$ all whose upper elementary factors are of falling-vertex type,
\begin{equation}\label{presentes}
\mbox{\it the edge ingredients of $e'$ are present in all steps of the path.}
\end{equation}
Therefore \red{$\mathcal{G}'$} is partitioned into two sets, $\red{\mathcal{G}'}_\text{fall}$ and $\red{\mathcal{G}'}_\text{branch}$, where the former set consists of the paths in $\mathcal{G}$ that satisfy (III) without satisfying~(I), and the latter set consists of the paths in $\mathcal{G}$ that do not satisfy (III). We construct involutions $\iota_\text{fall}\colon\red{\mathcal{G}'}_\text{fall}\to\red{\mathcal{G}'}_\text{fall}$ and $\iota_\text{branch}\colon\red{\mathcal{G}'}_\text{branch}\to\red{\mathcal{G}'}_\text{branch}$ with the required properties.

For a path $\red{\gamma}=a_0\searrow b_1\nearrow a_1\searrow\cdots\searrow b_k\nearrow a_k$ in $\red{\mathcal{G}'}_\text{fall}$, \red{the observation in~(\ref{presentes}) and the form of the closing-lock pairing $b_k\nearrow a_k$ imply that} all edges $(x_i,\overline{x}_i)$, $1\leq i\leq m$, must be ingredients of \red{$a_0$. The additional edge of the critical $(m+1)$-cube $a_0$ must then have the form $(y,y[d])$, with $y\not\in\{x,x_1,\ldots,x_m\}$, which is then replaced by either $y$ or $y[d]$ at the beginning of $\gamma$. Given the form of $b_k\nearrow a_k$, $y$ must lie in $x$-direction $r_x+1$. Then,} as in the proof of Proposition~\ref{trivialmorsediff}, the definition of $\iota_{\text{fall}}$ is based on the two options \red{for $a_0\searrow b_1$, as both lead to summands of~(\ref{elmeromero}) ---unlike the situation in Remark~\ref{casoespecial}, the ending cube of $\iota_{\text{fall}}(\gamma)$ might fail to be in the $\Sigma_n$-orbit of the ending cube of $\gamma$.} Likewise, the definition of $\iota_{\text{branch}}$ is based on the two forms of replacing by a vertex the edge ingredient at the last upper elementary factor that is not of falling-vertex type.
\end{proof}

\section{Exterior-face basis for trees with binary core}
We have made a careful distinction between $\image$ and $H^*(\UDnT;R)$ in the previous sections so to provide clear proof arguments. In this section we use the resulting algebro-combinatorial description of cup-products and have no need to make any further distinction between these isomorphic rings. Accordingly, we transfer the notation and descriptions of elements in $\image$ back to $H^*(\UDnT;R)$. \red{In particular, the notation and conventions in the paragraph containing~(\ref{aproduct}) will be carried over this final section, directly in the context of $H^*(\UDnT;R)$, with the simplifications discussed below.} 

\begin{definition}
A tree $T$ is said to have binary core provided that, for each essential vertex $x$ of $T$, at most two of the components of $T\setminus\{x\}$ in $x$-directions $1,2,\ldots,d_x$ carry essential vertices (recall $d_x:=d(x)-1$).
\end{definition}

Throughout this section, $T$ stands for a tree with binary core (e.g.~an actual binary tree). In addition, we assume that the chosen planar embedding of $T$ has been adjusted so that, for any essential vertex~$x$ of $T$, 
\begin{equation}\label{laassumpcion}
\mbox{\emph{no component of $T\setminus\{x\}$ in $x$-direction $j$ with $1\leq j\leq d_x-2$ carries an essential vertex.}}
\end{equation}
There are two reasons for sticking to such an hypothesis. For one, the existence of non-vanishing products whose factors are given by weak-interacting basis elements $$\{k_i\l x_i,(p_{i,1},\ldots,p_{i,r_i}),(q_{i,1},\ldots,q_{i,s_i})\}$$ with $x_1<\cdots<x_m$, i.e., the obstructions in Remark~\ref{obstruction}, is somehow restricted (cf.~Example~\ref{linearcase}), while our description of the corresponding product is greatly simplified. Explicitly, in the setting and notation of Theorem~\ref{productsvialowerpaths}, since the top left configuration in Figure~\ref{four} holds,~(\ref{laassumpcion}) forces $s_x=1$, i.e., the edge $(x,\overline{x})$ must lie in the largest $x$-direction, with $x_1$ then lying in the second largest $x$-direction $r_x=d_x-1$. In particular, the product $\Pi_2$ takes the simpler form
\begin{equation}\label{formareducida}
\Pi_2=-\sum\left\{R_0-|\red{a}|\ll x,\red{a},Q_x\ll x_1, P_{1},Q_{1}\ll\cdots\ll x_m, P_{m},Q_{m}\right\},
\end{equation}
where the sum runs over all $r_x$-tuples of integer numbers $a=(a_1,\ldots, a_{r_x})$ with $a>0$ and $|a|\leq R_0$.

The second advantage for working under the situation in~(\ref{laassumpcion}) is that, for $1\leq i\leq m$ and $j\leq d_i-2$, any set of pruned leaves $L_{i,j}$ associated to a product~(\ref{aproduct}) is empty. As a result, the corresponding $C_{i,j}$-local interaction is ``vacuous'' in the sense that the $C_{i,j}$-instance of~(\ref{localinteraction}) simplifies to $\ell_{C_{i,j}}(\nu_i)\geq0$ ---a condition which is certainly true. In fact, still in the context of~(\ref{aproduct}), there will be no local interactions in the positive $x_i$-directions leading to a weak interaction situation as long as $p_{i,j}>0$ for some $j\leq\min\{r_i,d_i-2\}$ (cf.~(\ref{px0})). In particular, it makes sense to reset the notation for pruned leaves in the presence of~(\ref{laassumpcion}): we shall set $L_1(x_i):=L_{i,d_i-1}$ and $L_2(x_i):=L_{i,d_i}$ when $i>0$, and $L_1(x_0):=L_{0,1}$ (recall from Definition~\ref{hojas} that $x_0$ stands for the root of $T$).

Expression~(\ref{formareducida}) suggests redefining some of the basis elements $\langle k,x,(p_1,\ldots,p_r),(q_1,\ldots,q_s)\rangle\in H^1(B_nT)$ in the proof of Theorem~\ref{casogeneral}. Namely, for the purposes of this section, if $p_1=\cdots=p_{r-1}=0$ and $s=1$, we set
\begin{equation}\label{chbs}
\langle k,x,(p_1,\ldots,p_r),(q_1,\ldots,q_s)\rangle:=\sum \left\{k-|a|\ll x,(a_1,\ldots,a_{r-1},p_r+a_r),(q_1)\right\},
\end{equation}
where the summation runs over all $r$-tuples $a=(a_1,\ldots,a_r)\geq0$ with $|a|\leq k$, otherwise we keep $$\langle k,x,(p_1,\ldots,p_r),(q_1,\ldots,q_s)\rangle:=\left\{k\ll x,(p_1,\ldots,p_r),(q_1,\ldots,q_s)\right\}.$$

\begin{remark}\label{angles}{\em
We use the angle-bracket notation $\left\langle \rule{0mm}{3mm}k,x,p,q\right\rangle$ since we have reserved the parenthesis notation for cubes in $\DnT$ (as tuples of their ingredients). Additionally, the angle-bracket notation is intended to stress the change of basis in~(\ref{chbs}).
}\end{remark}

A central task in this section is the analysis of the relationship between \red{ordered\footnote{In the sense that $x_1<\cdots<x_m$.}} products
\begin{equation}\label{dosproductos}
\langle k_1,x_1,p_{1},q_{1}\rangle\cdots\langle k_m,x_m,p_{m},\red{q}_{m}\rangle \mbox{ \ and \ } \{ k_1\l x_1,p_{1},q_{1}\}\cdots\{ k_m\l x_m,p_{m},q_{m}\}.
\end{equation}
We say that any of these products is a strong interaction product if the \red{factors of the product on the right hand-side of~(\ref{dosproductos})} interact strongly (in the sense of Definition~\ref{interactionparameters}). 

\begin{remark}\label{configuraciones}{\em Corollary~\ref{interactionproducts}, Proposition~\ref{interactionlengths} and Theorem~\ref{productsvialowerpaths} show that both products in (\ref{dosproductos}) are (possibly empty) linear combinations of basis elements $\{\centerdot\l x_1,\centerdot,\centerdot\l\cdots\l x_m,\centerdot,\centerdot\}$. Such a linear combination will be written as $$\sum\raisebox{.6mm}{$\centerdot$}\,\{\centerdot\l x_1,\centerdot,\centerdot\l\cdots\l x_m,\centerdot,\centerdot\}.$$ Here and below, a dot~`$\centerdot$' stands for either an unspecified ring coefficient, or an unspecified tuple\footnote{As in Definition~\ref{interactionparameters}, we make no distinction between integer numbers and 1-tuples.} of integer numbers, $t=(t_1,t_2,\ldots)\geq0$, satisfying $t>0$ when the tuple immediately follows an essential vertex~$x_i$ (the context clarifies the option).
}\end{remark}

\begin{theorem}\label{binarycore}
Let $T$ be a tree with binary core, $R$ be a commutative ring with 1, and $n\geq1$. Then $H^*(B_nT;R)\cong\Lambda_R(K_nT)$. In detail: \emph{(i)} An ordered product $\langle k_1,x_1,p_{1},q_{1}\rangle\cdots\langle k_m,x_m,p_{m},q_{m}\rangle$ is non-zero if and only if it is a strong interaction product. \emph{(ii)} Two ordered strong interaction products agree if and only if they have the same factors. \emph{(iii)} A graded basis of $H^*(\UDnT)$ is given by the set  of ordered strong interaction products.
\end{theorem}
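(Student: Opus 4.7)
I propose to prove Theorem~\ref{binarycore} by expanding each ordered product $\widetilde{\Pi}=\langle k_1,x_1,p_1,q_1\rangle\cdots\langle k_m,x_m,p_m,q_m\rangle$ (with $x_1<\cdots<x_m$) in the $\{\cdot\}$-basis of $H^*(\UDnT;R)$ and exhibiting a triangular structure that simultaneously yields all three assertions.

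First, I verify that $\{\langle k,x,p,q\rangle:(k,x,p,q)\in V_nT\}$ is indeed a basis of $H^1(B_nT;R)$. This is immediate from~(\ref{chbs}): when the change of basis applies, the expansion of $\langle k,x,(0,\ldots,0,p_r),(q_1)\rangle$ in the $\{\cdot\}$-basis has $\{k\ll x,(0,\ldots,0,p_r),(q_1)\}$ as the unique summand with $k$-parameter equal to~$k$, while every other summand involves strictly smaller $k$-parameter; hence the transition matrix from $\{\cdot\}$ to $\langle\cdot\rangle$ is unitriangular.

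Next, I expand every factor of $\widetilde{\Pi}$ via~(\ref{chbs}) to rewrite
$$\widetilde{\Pi}=\sum_{A}\{\tilde k_1\ll x_1,\tilde p_1,q_1\}\cdots\{\tilde k_m\ll x_m,\tilde p_m,q_m\},$$
indexed by multi-shifts $A=(a^{(1)},\ldots,a^{(m)})$, and evaluate each summand via the trichotomy established in Sections~\ref{sectioncupproducts}--\ref{lgp}: Corollary~\ref{interactionproducts} for strong interactions, formula~(\ref{formareducida}) for weak interactions (the binary-core hypothesis together with~(\ref{laassumpcion}) collapses Theorem~\ref{productsvialowerpaths} to this single identity, since neither~(\ref{dosmas}) nor~(\ref{unomas}) can contribute), and zero otherwise. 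The central claim to establish is the following triangularity/cancellation dichotomy: if $(\nu_1,\ldots,\nu_m)$ interacts strongly with interaction parameters $(R_0,P_i,Q_i)$, the $A=0$ summand contributes exactly $\{R_0\ll x_1,P_1,Q_1\ll\cdots\ll x_m,P_m,Q_m\}$ via Corollary~\ref{interactionproducts}, while every other summand contributes basis elements whose root-component parameter is strictly smaller than~$R_0$ (the key identity being $\tilde R_0=R_0-\sum_{x_j\in L_{0,1}}|a^{(j)}|$); if instead $(\nu_1,\ldots,\nu_m)$ does not interact strongly, then all contributions cancel out pairwise, the positive strong-interaction terms produced by $A$-summands with $|a^{(j)}|>0$ matching the negative weak-interaction terms~(\ref{formareducida}) produced by lower $A'$-summands.

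Granted this dichotomy, Lemma~\ref{systemofequations} identifies strong-interaction tuples with the indexing set of the $\{\cdot\}$-basis of $H^m$, so the $\widetilde{\Pi}$'s associated with strong tuples are linearly independent (by the triangularity half) and of the correct cardinality; hence they form a basis, proving (iii), and (ii) follows from the injectivity content of Lemma~\ref{systemofequations}. Assertion (i) follows from the vanishing half of the dichotomy together with the nonvanishing leading term that accompanies each strong $\widetilde{\Pi}$. I expect the principal obstacle to be executing the cancellation half precisely: the summation range $|a^{(j)}|\leq k_j$ dictated by~(\ref{chbs}) does not coincide with the range $|a|\leq R_0$ appearing in~(\ref{formareducida}), and reconciling the two demands an iterated factor-by-factor peeling, while carefully tracking how each individual shift $a^{(j)}$ modifies~$R_0$, each $P_i$, or each $Q_i$ depending on whether $x_j$ lies in the root component of $T\setminus\{x_1,\ldots,x_m\}$ or in a branch component bounded by some other $x_i$. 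The binary-core hypothesis is precisely what makes the bookkeeping tractable, since it forces $s_x=1$ for every essential vertex~$x$ appearing in the configuration, and therefore every weak-interaction contribution to~$\widetilde{\Pi}$ takes the single simple form~(\ref{formareducida}).
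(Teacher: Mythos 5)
Your claimed triangularity --- that the $A=0$ summand supplies the leading term $\{R_0\ll B_1\ll\cdots\ll B_m\}$ while every other $A$-summand contributes basis elements whose first parameter is \emph{strictly} less than $R_0$ --- is incorrect, and this is a genuine gap. The identity $\tilde R_0=R_0-\sum_{x_j\in L_{0,1}}|a^{(j)}|$ decreases the root parameter only when some shifted factor $x_j$ lies in $L_{0,1}$; a shift $a^{(j)}\neq0$ at a factor $x_j$ lying in a branch component (so $x_j\in L_1(x_i)\cup L_2(x_i)$ for some $i$) leaves $\tilde R_0=R_0$ untouched and instead decreases the interaction parameter $\tilde P_{i,\ell}$ or $\tilde Q_{i,\ell}$ at the parent vertex $x_i$. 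If those parameters remain nonnegative, the corresponding $A$-summand evaluates to a nonzero basis element whose first parameter still equals $R_0$. The paper's formula~(\ref{etiquetafinal}) makes this explicit: besides the $\underline{R_0}$-corrections there are correction terms $\{R_0\l B^{(1)}\l\cdots\l\underline{B^{(j-1)}}\l\dt{B}^{(j)}\l\cdots\}$ that share $R_0$ and even the leading blocks, with a strict decrease only at level $j-1$. The linear independence in assertions~(ii) and~(iii) requires the paper's level-wise lexicographic order on $(R_0,B^{(1)},B^{(2)},\ldots)$, not the $R_0$-filtration alone, and your proposal never exhibits that finer order.

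There is a second, structural issue. Your ``expand all shifts, then reorganize'' strategy lacks the scaffolding that makes the bookkeeping terminate. The paper instead factors $\varpi$ along the tree of pruned leaves (Definition~\ref{levels}) and inducts on the interaction level: the vanishing half (Corollary~\ref{nonstrongiszero}) comes from Lemma~\ref{T11}, proved by a descending induction on $p_{1,d_1-1}+R-n$ using the recursion $\langle k,x,(\ldots,p),(q)\rangle=\langle k-1,x,(\ldots,p+1),(q)\rangle$ modulo interaction-vanishing terms, while the leading-term structure~(\ref{etiquetafinal}) is built up through Lemmas~\ref{variasconfigs},~\ref{T11full}, Proposition~\ref{T1epsilon} and Proposition~\ref{induccinfinal}. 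Your own observation that the range $|a^{(j)}|\le k_j$ from~(\ref{chbs}) does not coincide with the range in~(\ref{formareducida}) is precisely the symptom that a flat $A$-index pairing cannot be made to cancel; the paper's level-by-level factorization is what reconciles the two ranges. The trichotomy you invoke is the right toolkit, but both the triangularity and the cancellation halves require the interaction-level induction that your proposal does not set up, so the argument as written does not close.
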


The crux of the matter in the proof of Theorem~\ref{binarycore} is getting at a precise description of the conditions that have to be satisfied by some of the unspecified dot ingredients in
\begin{equation}\label{OP}
\aop=\sum\raisebox{.6mm}{$\centerdot$}\,\{\centerdot\l x_1,\centerdot,\centerdot\l\cdots\l x_m,\centerdot,\centerdot\}.
\end{equation}
With this in mind, the product in~(\ref{OP}) will be denoted by $\varpi$ throughout the section, setting $$R_0:=\mathcal{R}_0(x_1,\ldots,x_m),\,\; P_{i,j}:=\mathcal{P}_{i,j}(x_1,\ldots,x_m),\,\; Q_{i,j}:=\mathcal{Q}_{i,j}(x_1,\ldots,x_m),$$ $P_i:=(P_{i,1}, \dots, P_{i,r_i})$ and $Q_i:=(Q_{i,1}, \dots, Q_{i,s_i})$, $1\leq i\leq m$, for the corresponding interaction parameters.  Furthermore, we set
\begin{equation}\label{prelevel}
B_i\coloneqq(x_i,P_{i},Q_{i})\ \ \text{and} \ \ \dt{B}_i\coloneqq(x_i,\centerdot,\centerdot),
\end{equation}
where the latter expression stands for any triple with unspecified tuples in the second and third coordinates (subject to the usual restrictions). Additionally, the $i$-th factor on the left hand-side of (\ref{OP}) will denoted by~$\phi_i$. For instance, in terms of the notation set forth in Definition~\ref{interactionparameters}, $$\phi_i=\{k_i\l x_i,p_{i},q_{i}\}+\sum\{\underline{k_i}\l x_i,\centerdot,q_{i}\},$$ with a possibly empty summation, whereas Corollary~\ref{interactionproducts} asserts that the second product in (\ref{dosproductos}) is trivial or agrees with $\{R_0\l B_1\l\ldots\l B_m\}$ under, respectively, the no-interaction or strong-interaction condition of the factors.

In the following results, some of which are true for general trees, we make free use of the notation and considerations above. Likewise, the use of cup-product descriptions in Sections~\ref{sectioncupproducts} and~\ref{lgp}, with the simplification in~(\ref{formareducida}), we will referred generically as ``interaction reasons''.

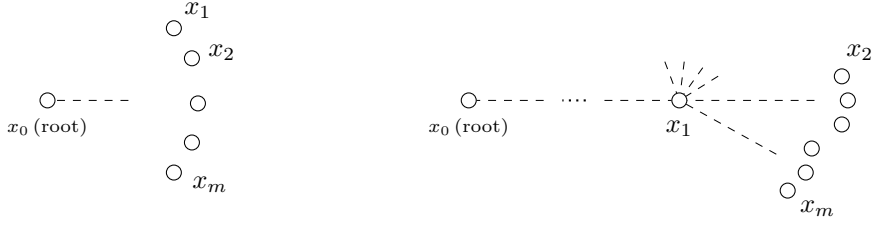
\begin{figure}
	\centering
	\begin{tikzpicture}[scale=.8, rotate = 180, xscale = -1]
		\node[style={circle, draw, scale=.6}] (1) at ( -6, 4.5) {};
		\node (2) at ( -4.5, 4.5) {};
		\node[style={circle, draw, scale=.6}] at ( -3.9, 5.7) {};
		\node[style={circle, draw, scale=.6}] at ( -3.9, 3.3) {};
		\node[style={circle, draw, scale=.6}] at ( -3.6, 5.2) {};
		\node[style={circle, draw, scale=.6}] at ( -3.6, 3.8) {};
		\node[style={circle, draw, scale=.6}] at ( -3.5, 4.55) {};
		\node at ( -3.5, 3) {$x_1$}; \node at ( -3.1, 3.7) {$x_2$}; \node at ( -3.3, 5.9) {$x_m$};

		\node[style={circle, draw, scale=.6}] (3) at ( 1, 4.5) {};
		\node (11) at ( 2.4, 4.5) {}; \node (12) at ( 3.1, 4.5) {};
		\node[style={circle, draw, scale=.6}] (4) at ( 4.5, 4.5) {};
		\node (5) at ( 6.3, 5.5) {}; \node (6) at ( 6.9, 4.5) {};
		\node (7) at ( 5.3, 4) {}; \node (8) at ( 5, 3.8) {}; 
		\node (9) at ( 4.6, 3.7) {}; \node (10) at ( 4.2, 3.7) {};
		
		\draw[dotted, thick] (12) -- (11);
		\draw[dashed, very thin] (2) -- (1)node[below]{\raisebox{-3mm}{\scriptsize $x_0\,$(root)}};
		\draw[very thin, dashed] (12) -- (4);
		\draw[very thin, dashed] (11) -- (3)node[below]{\raisebox{-3mm}{\scriptsize $x_0\,$(root)}};
		\draw[very thin, dashed] (4)node[below]{\raisebox{-3mm}{$x_1$}} -- (5);
		\draw[very thin, dashed] (6) -- (4);
		\draw[very thin, dashed] (7) -- (4); \draw[very thin, dashed] (8) -- (4);
		\draw[very thin, dashed] (9) -- (4); \draw[very thin, dashed] (10) -- (4);
		\node[style={circle, draw, scale=.6}] at (6.6, 5.7) {};
		\node[style={circle, draw, scale=.6}] at (6.7, 5.3) {};
		\node[style={circle, draw, scale=.6}] at (6.3, 6) {};
		\node[style={circle, draw, scale=.6}] at (7.3, 4.5) {};
		\node[style={circle, draw, scale=.6}] at (7.2, 4.1) {};
		\node[style={circle, draw, scale=.6}] at (7.2, 4.9) {};
		\node at (7.5, 3.7) {$x_2$}; \node at ( 6.8, 6.3) {$x_m$};

	\end{tikzpicture}
	\caption{Configurations of essential vertices in Lemma~\ref{variasconfigs}.}
\label{configs}
\end{figure}

\begin{lemma}\label{variasconfigs}
\begin{enumerate}[(1)]
\item\label{T01}
Assume $L_1(x_0)=\{x_1,x_2,\ldots,x_m\}$ (left configuration in Figure~\ref{configs}), then
$$
\varpi=\begin{cases}
\{R_0\l B_1\l\cdots\l B_m\} +\sum\raisebox{.6mm}{$\centerdot$}\,\{\underline{R_0}\l \dt{B}_1\l\cdots\l \dt{B}_m\}, & \mbox{if $R_0\geq0$;} \\
\rule{93pt}{0mm}0, & \mbox{otherwise.}
\end{cases}
$$
\item\label{T12}
Assume $L_{1}(x_1)=\{x_2,x_3,\ldots,x_u\}$ and $L_{2}(x_1)=\{x_{u+1},\ldots,x_{m-1},x_m\}$ with $1\leq u\leq m$ (right configuration in Figure~\ref{configs}) with $u=1$ $($i.e. $L_1(x_1)=\varnothing$ and $L_2(x_1)=\{x_2,\ldots,x_m\})$ if $s_1=1$, then 
$$
\varpi=\begin{cases}
\{R_0\l B_1\l\cdots\l B_m\}+
\sum\raisebox{.6mm}{$\centerdot$}\,\{\underline{R_0}\l \dt{B}_1\l\cdots\l \dt{B}_m\}+\sum\raisebox{.6mm}{$\centerdot$}\,\{R_0\l x_1,P_{1},\underline{Q_{1}}\l\dt{B}_2\l\cdots\l \dt{B}_m\}, & \mbox{if $Q_1\geq0;$} \\ 0, & \mbox{otherwise.}
\end{cases}
$$
\end{enumerate}
\end{lemma}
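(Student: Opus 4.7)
The plan is to expand each factor $\phi_i = \langle k_i, x_i, p_i, q_i\rangle$ according to~(\ref{chbs}) into a sum of brace-notation basis elements, multiply the expansions using the rules developed in Sections~\ref{sectioncupproducts} and~\ref{lgp} (Corollary~\ref{interactionproducts} for strong-interaction products, Proposition~\ref{interactionlengths} for vanishing, and the simplified weak-interaction formula~(\ref{formareducida}) available in the binary-core setting), and then sort the resulting basis elements into the summand types appearing in the lemma. The key structural observation enabling the classification is that every brace-summand arising from~(\ref{chbs}) applied to a single $\phi_i$ has a strictly positive $p$-tuple: the shift in~(\ref{chbs}) only adds non-negative entries to the already-positive $p_{i,r_i}$. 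Consequently, in any brace-product appearing in the expansion of $\varpi$, the interaction parameter $\mathcal{P}_1$ never vanishes, so weak-interaction corrections arising through the first factor $\phi_1$ do not occur; only the parameters $\mathcal{R}_0$ and $\mathcal{Q}_1$, together with potential weak interactions among $\phi_2,\ldots,\phi_m$, can affect the form of the output.

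For part~(1), the hypothesis $L_1(x_0) = \{x_1,\ldots,x_m\}$ together with~(\ref{laassumpcion}) forces every $L_{i,\ell}$ with $i\geq 1$ to be empty. Hence for an expansion choice $\mathbf{a} = (a_1,\ldots,a_m)$ the interaction parameters reduce to $\mathcal{R}_0 = R_0 - \sum_i |a_i|$, $\mathcal{P}_i = p_i(a_i) > 0$, and $\mathcal{Q}_i = q_i$, so by Corollary~\ref{interactionproducts} each brace-product is either zero (when $\mathcal{R}_0 < 0$) or the strong-interaction basis element $\{\mathcal{R}_0 \l x_1, p_1(a_1), q_1 \l \cdots \l x_m, p_m(a_m), q_m\}$. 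The $\mathbf{a} = \mathbf{0}$ choice contributes $\{R_0 \l B_1 \l \cdots \l B_m\}$ exactly when $R_0 \geq 0$; all other choices yield $\mathcal{R}_0 < R_0$ and fall into $\sum \centerdot\, \{\underline{R_0} \l \dt{B}_1 \l \cdots \l \dt{B}_m\}$. When $R_0 < 0$ every term vanishes and $\varpi = 0$.

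For part~(2), the hypothesis on $L_1(x_1)$ and $L_2(x_1)$ forces $L_1(x_0) = \{x_1\}$, so $\mathcal{R}_0 = k_1 - |a_1|$ depends only on $a_1$; the combination of~(\ref{laassumpcion}) with the clause ``$u=1$ if $s_1=1$'' empties every $L_{1,\ell}$ for $\ell \leq r_1$, forcing $\mathcal{P}_1 = p_1$ for every expansion. A direct computation shows that $\mathcal{Q}_1$ equals $Q_1$ decreased by $\sum_{j \in L_1(x_1)} |a_j|$ in position $s_1-1$ and by $\sum_{j \in L_2(x_1)} |a_j|$ in position $s_1$. The resulting basis elements then split into three mutually exclusive types according to $\mathbf{a}$: \emph{(a)}~$\mathbf{a}=\mathbf{0}$ gives the main term $\{R_0 \l B_1 \l \cdots \l B_m\}$; \emph{(b)}~$a_1 \neq 0$ gives $\mathcal{R}_0 < R_0$ and lies in $\sum \centerdot\, \{\underline{R_0} \l \dt{B}_1 \l \cdots \l \dt{B}_m\}$; \emph{(c)}~$a_1 = \mathbf{0}$ together with $a_j \neq 0$ for some $j \geq 2$ keeps $\mathcal{R}_0 = R_0$ and $\mathcal{P}_1 = P_1$ while strictly reducing $\mathcal{Q}_1$, landing in $\sum \centerdot\, \{R_0 \l x_1, P_1, \underline{Q_1} \l \dt{B}_2 \l \cdots \l \dt{B}_m\}$. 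The vanishing clause when some entry of $Q_1$ is negative follows because $\mathcal{Q}_1$ is then even more negative for every expansion, so every brace-product is zero.

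The principal obstacle will be the treatment of possible weak interactions \emph{among} $\phi_2,\ldots,\phi_m$, whose substructure the lemma does not specify. Such interactions, when they arise, are resolved by iterating~(\ref{formareducida}) on the sub-product $\phi_2\cdots\phi_m$; the resulting corrections touch only the $\dt{B}_2,\ldots,\dt{B}_m$ slots of each brace-product, leaving the first three coordinates $\mathcal{R}_0$, $\mathcal{P}_1$, and $\mathcal{Q}_1$ untouched, and this compatibility is what preserves the three-category classification in part~(2). Carefully verifying that no stray contribution escapes these categories, and that the coefficient of the distinguished basis element $\{R_0 \l B_1 \l \cdots \l B_m\}$ in the output equals exactly $1$, is the most delicate part of the bookkeeping.
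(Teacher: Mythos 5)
Your proposal is correct and follows essentially the same approach as the paper: expand each $\phi_i$ via~(\ref{chbs}), identify which pruned leaves are empty under the hypotheses, and sort the resulting brace-products by interaction parameter. One small point: your closing worry about weak interactions among $\phi_2,\ldots,\phi_m$ is vacuous in this lemma. The hypotheses force $L_{i,\ell}=\varnothing$ for all $i\geq 1$ in part~(1), and in part~(2) any two $x_j,x_\ell$ with $j,\ell\geq 2$ lie in each other's direction~$0$ (their connecting path stays in the closure of $C_{1,d_1-1}$ or $C_{1,d_1}$, or passes through $x_1$), so $L_{i,\ell}=\varnothing$ for $i\geq 2$, while $L_{1,\ell}=\varnothing$ for $\ell\leq r_1$. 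Combined with your own key observation that every brace-summand coming out of~(\ref{chbs}) carries a strictly positive $p$-tuple, this means no $\mathcal{P}_i$ can ever vanish, so every brace-product in the expansion is already either a single basis element (Corollary~\ref{interactionproducts}) or zero (Proposition~\ref{interactionlengths}); the iteration of~(\ref{formareducida}) you anticipate is never invoked, and the ``delicate bookkeeping'' you flag reduces to noting that $\mathbf{a}=\mathbf{0}$ is the unique choice reproducing the leading basis element.
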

\begin{proof}
The first assertion follows by direct inspection of the expression
$$
\left(\{k_1\l x_1,p_{1},q_{1}\} +\sum\{\underline{k_1}\l x_1,\centerdot,\centerdot\}\right)\cdots\left(\{k_m\l x_m,p_{m},q_{m}\}+\sum\{\underline{k_m}\l x_m,\centerdot,\centerdot\}\right), 
$$
noticing that the only non-vacuous interaction occurs in the tree $T_{0,1}$ (so that $P_i=p_i$ and $Q_i= q_i$ for $1\leq i\leq m$). The second assertion is proved in a similar way, noticing that this time non-vacuous interactions occur only either on $T_{1,d_1}$ or $T_{1,d_1-1}$ (or both). In any case, $R_0=k_1$, $P_{i}=p_{i}$  for $ 1\leq i \leq m$, while $Q_{i}=q_i$ for $2\leq i\leq m$. 
\end{proof}

A key situation with $L_1(x_1)\cup L_2(x_1)=\{x_2,x_3,\ldots,x_m\}$ not covered by Lemma~\ref{variasconfigs}(\ref{T12}) is:
\begin{lemma}\label{T11}
Assume $L_{1}(x_1)=\{x_2,x_3,\ldots,x_m\}$. Then the product of $\langle k_1, x_1,(p_{1,1},\ldots,p_{1,d_1-1}),(q_{1,1})\rangle$ with $\{R\l x_2,p_{2},q_{2}\l\cdots\l x_m,p_{m},q_{m}\}$ vanishes provided $p_{1,1}=\cdots=p_{i,d_1-2}=0$ and $p_{1,d_1-1}+R\leq n$.
\end{lemma}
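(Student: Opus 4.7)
My plan is to expand the first factor using the change of basis~(\ref{chbs}), classify the resulting summands according to their interaction with $\Pi_0 := \{R \l x_2, p_2, q_2 \l \cdots \l x_m, p_m, q_m\}$, and exhibit an index-matching cancellation between the strong-interaction contributions and the unique weak-but-not-strong contribution. Write $r := d_1 - 1 = r_1$ (noting $s_1 = 1$) and $N := n - R - p_{1,r} \geq 0$; the hypothesis $p_{1,1} = \cdots = p_{1,r-1} = 0$ together with~(\ref{chbs}) gives
\[
\langle k_1, x_1, (0, \ldots, 0, p_{1,r}), (q_{1,1}) \rangle = \sum_{\alpha \geq 0,\, |\alpha| \leq k_1} \phi_\alpha, \qquad \phi_\alpha := \{k_1 - |\alpha| \l x_1, (\alpha_1, \ldots, \alpha_{r-1}, p_{1,r} + \alpha_r), (q_{1,1})\},
\]
where $\alpha = (\alpha_1, \ldots, \alpha_r)$. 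It thus suffices to compute each $\phi_\alpha \cdot \Pi_0$ and to sum.

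The first step is to read off the interaction parameters of $\phi_\alpha \cdot \Pi_0$. By Lemma~\ref{systemofequations} and Corollary~\ref{interactionproducts}, $\Pi_0$ admits a strong-interaction factorization $\prod_{i=2}^m \{k_i \l x_i, p_i, q_i\}$ with $\sum_{i=2}^m (k_i - n) = R - n$. Combined with the hypothesis $L_1(x_1) = \{x_2, \ldots, x_m\}$ and~(\ref{laassumpcion})---which force $L_{0,1}(\{x_1, \ldots, x_m\}) = \{x_1\}$, $L_{1, r}(\{x_1, \ldots, x_m\}) = \{x_2, \ldots, x_m\}$, and all other $L_{1,j}$'s around $x_1$ empty---Definition~\ref{interactionparameters} yields
\[
\mathcal{R}_0 = k_1 - |\alpha|, \quad \mathcal{P}_{1, j} = \alpha_j \text{ for } j < r, \quad \mathcal{P}_{1,r} = \alpha_r - N, \quad \mathcal{Q}_{1,1} = q_{1,1},
\]
while $\mathcal{P}_i = p_i$ and $\mathcal{Q}_i = q_i$ for $i \geq 2$. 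I would then classify the $\alpha$'s. When $\alpha_r < N$ we have $\mathcal{P}_{1,r} < 0$, so the factors do not interact and Proposition~\ref{interactionlengths} gives $\phi_\alpha \cdot \Pi_0 = 0$. The unique weak-but-not-strong case, which arises only when $N \leq k_1$, is $\alpha_* := (0, \ldots, 0, N)$: here $\mathcal{P}_1 = 0$ while $\mathcal{P}_i = p_i > 0$ for $i \geq 2$. Since $x_2, \ldots, x_m$ lie in the second-largest $x_1$-direction (top-left configuration of Figure~\ref{four}), the simplified formula~(\ref{formareducida}) applies and produces
\[
\phi_{\alpha_*} \cdot \Pi_0 = -\sum_{\beta > 0,\, |\beta| \leq k_1 - N} \{k_1 - N - |\beta| \l x_1, \beta, (q_{1,1}) \l x_2, p_2, q_2 \l \cdots \l x_m, p_m, q_m\}.
\]
All remaining $\alpha$'s (those with $\alpha_r \geq N$ and $(\alpha_1, \ldots, \alpha_{r-1}, \alpha_r - N) \neq 0$) give strong-interaction products, and Corollary~\ref{interactionproducts} delivers
\[
\phi_\alpha \cdot \Pi_0 = \{k_1 - |\alpha| \l x_1, (\alpha_1, \ldots, \alpha_{r-1}, \alpha_r - N), (q_{1,1}) \l x_2, p_2, q_2 \l \cdots \l x_m, p_m, q_m\}.
\]

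The final step is the change of variables $\beta := (\alpha_1, \ldots, \alpha_{r-1}, \alpha_r - N)$ in the strong-interaction sum. The relations $|\beta| = |\alpha| - N$ and $\beta > 0 \Leftrightarrow (\alpha_1, \ldots, \alpha_{r-1}, \alpha_r - N) \neq 0$ bijectively identify its index set with $\{\beta > 0 : |\beta| \leq k_1 - N\}$ and match summands term-by-term (with opposite global signs), so that the strong-interaction sum equals $-\phi_{\alpha_*} \cdot \Pi_0$. The grand total therefore vanishes; the cases $N > k_1$ and $N = 0$ are absorbed by the same bookkeeping (in the former both sums are empty). The main obstacle I anticipate is precisely this parameter-bookkeeping: verifying that (\ref{laassumpcion}) together with $L_1(x_1) = \{x_2, \ldots, x_m\}$ forces all $L_{1, j}$ with $j \neq r$ to be empty (so the only compensatory term is $R - n$ appearing in $\mathcal{P}_{1, r}$), and confirming that the top-left configuration of Figure~\ref{four} holds for $\phi_{\alpha_*} \cdot \Pi_0$ so that the simplified form~(\ref{formareducida}) of Theorem~\ref{productsvialowerpaths} applies verbatim.
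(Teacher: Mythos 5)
Your proof is correct, and it takes a genuinely different route from the paper's. The paper argues by downward induction on $p_{1,d_1-1}+R-n$ (which is $-N$ in your notation): the base case $N=0$ is settled by a single application of~(\ref{formareducida}) followed by Corollary~\ref{interactionproducts} to reidentify each resulting basis element as a product $\{\centerdot\ll x_1,\centerdot,\centerdot\}\cdot\Pi_0$, so that the change of basis~(\ref{chbs}) cancels exactly; the inductive step shows $\langle k_1,x_1,(0,\ldots,0,p_{1,d_1-1}),q_1\rangle\cdot\Pi_0=\langle k_1-1,x_1,(0,\ldots,0,p_{1,d_1-1}+1),q_1\rangle\cdot\Pi_0$ by observing that the $a_{d_1-1}=0$ summands in~(\ref{chbs}) fail to interact with $\Pi_0$ and therefore vanish. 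You compress this telescope into one pass: expand~(\ref{chbs}) once, compute $\mathcal{P}_{1,r}=\alpha_r-N$, stratify the $\alpha$'s into no-interaction ($\alpha_r<N$), the unique weak-but-not-strong $\alpha_*$, and strong, then match the strong sum against $-\phi_{\alpha_*}\cdot\Pi_0$ by the shift $\beta_r=\alpha_r-N$. The ingredients are identical (change of basis, interaction parameters, Proposition~\ref{interactionlengths}, Corollary~\ref{interactionproducts}, and~(\ref{formareducida})), but your closed-form cancellation is more transparent, makes the uniqueness of the weak-but-not-strong term explicit, and automatically handles the edge cases $N=0$ and $N>k_1$ that the paper's induction would need to address separately (e.g.~when $k_1$ would have to drop below zero in the inductive step).
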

\begin{proof}
We proceed by induction on $p_{1,d_1-1}+R-n=p_{1,d_1-1}+\sum_{j=2}^m(t_j-n)\in\{0,-1,-2,\ldots\}$, where
$$\{R\l x_2,p_{2},q_{2}\l\cdots\l x_m,p_{m},q_{m}\}=\{t_2\l x_2,p_{2},q_{2}\}\cdots\{t_m\l x_m,p_{m},q_{m}\}$$
is the unique strong-interaction factorization of $\{R\l x_2,p_{2},q_{2}\l\cdots\l x_m,p_{m},q_{m}\}$ noted in the proof of Theorem~\ref{casogeneral}. Since $p_{1,j}=0$ for $j=1,\ldots,d_1-2$, the induction is grounded for $p_{1,d_1-1}+R-n=0$ by
\begin{align*}
\{k_1\l \, & x_1,(p_{1,1},\ldots,p_{1,d_1-1}),(q_{1,1})\}\cdot\left(\{t_2\l x_2,p_{2},q_2\}\cdots\{t_m\l x_m,p_{m},q_{m}\}\rule{0mm}{4mm}\right)=\\
&=-\sum\{ k_1-|a|\l x_1,a,(q_{1,1})\l x_2,p_2,q_2\l\cdots\l x_m,p_m,q_m\}\\
&=-\sum\{k_1-|a|\l x_1,(a_1,\ldots,a_{d_1-2},p_{1,d_1-1}+a_{d_1-1}),(q_{1,1})\} \left(\{t_2\l x_2,p_{2},q_{2}\}\cdots\{t_m\l x_m,p_{m},q_{m}\}\rule{0mm}{4mm}\right),
\end{align*}
where both summations run over tuples $a=(a_1,\ldots,a_{d_1-1})>0$ with $|a|\leq k_1$. The inductive step then follows by noticing that, for $p_{1,d_1-1}+R-n<0$,
\begin{align*}
\langle k_1, x_1,&\,(p_{1,1},\ldots,p_{1,d_1-1}),(q_{1,1})\rangle\cdot\left(\rule{0mm}{4mm}\{t_2\l x_2,p_2,q_2\}\cdots\{t_m\l x_m,p_m,q_m\}\right)=\\
&=\langle k_1-1, x_1, (p_{1,1},\ldots,p_{1,d_1-2},p_{1,d_1-1}+1),(q_{1,1})\rangle\cdot\left(\rule{0mm}{4mm}\{t_2\l x_2,p_2,q_2\}\cdots\{t_m\l x_m,p_m,q_m\}\right),
\end{align*}
as $\{k_1-|a|\l x_1, (p_{1,1}+a_1,\ldots,p_{1,d_1-2}+a_{d_1-2},p_{1,d_1-1}),(q_{1,1})\}\cdot\left(\rule{0mm}{4mm}\{t_2\l x_2,p_2,q_2\}\cdots\{t_m\l x_m,p_m,q_m\}\right)$ vanishes for $a=(a_1,\ldots,a_{d_1-2},0)\geq0$ with $|a|\leq k_1$ by interaction reasons.
\end{proof}

\begin{corollary}\label{nonstrongiszero}
If the factors on the left of~(\ref{OP}) do not yield a strong interaction product, then $\varpi=0$.
\end{corollary}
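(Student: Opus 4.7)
The plan is to induct on $m$. The base case $m=1$ is vacuous: a single factor $\langle k_1,x_1,p_1,q_1\rangle$ always gives a strong interaction product, since its interaction parameters are $\mathcal{R}_0=k_1\geq 0$, $\mathcal{P}_1=p_1>0$ (by the definition of $V_nT$) and $\mathcal{Q}_1=q_1\geq 0$, so the hypothesis is vacuously false.

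For the inductive step $m\geq 2$, I would first invoke Corollary~\ref{trivialsquares} to assume $x_1<\cdots<x_m$ (else $\varpi=0$ at once). Under assumption~(\ref{laassumpcion}) and the minimality of $x_1$ in the $T$-order, every $x_j$ with $j\geq 2$ must lie in $x_1$-direction $d_1-1$ or $d_1$: direction $0$ is impossible since $x_1$ is minimal, and directions $1,\dots,d_1-2$ are forbidden by~(\ref{laassumpcion}). So we are in the configuration of Lemma~\ref{variasconfigs}(\ref{T12}), with $L_1(x_1)=\{x_2,\ldots,x_u\}$ and $L_2(x_1)=\{x_{u+1},\ldots,x_m\}$ for a suitable $u\in\{1,\ldots,m\}$. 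Next, I would apply the inductive hypothesis to the sub-product $\tau:=\langle k_2,x_2,p_2,q_2\rangle\cdots\langle k_m,x_m,p_m,q_m\rangle$: if $\tau$ is not a strong interaction product, then $\tau=0$ by induction and $\varpi=\phi_1\cdot\tau=0$, so we may assume $\tau$ is strongly interacting.

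Under this assumption I would compare the interaction parameters of $\varpi$ with those of $\tau$. Since $x_1$ lies in $x_i$-direction $0$ for every $i\geq 2$, it belongs to no $L_{i,\ell}$ with $i\geq 2$ and $\ell\geq 1$; hence the parameters $\mathcal{P}_{i,\ell}$ and $\mathcal{Q}_{i,\ell}$ of the full product coincide with their counterparts in $\tau$ for $i\geq 2$, and so remain non-negative. Combined with $\mathcal{R}_0=k_1\geq 0$, this shows that the only parameters which can fail non-negativity are coordinates of $\mathcal{P}_1$ or $\mathcal{Q}_1$ that receive contributions from the non-empty $L_1(x_1)$ or $L_2(x_1)$. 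Furthermore, in the configuration at hand one checks that $\mathcal{P}_i>0$ for at least one $i$ automatically (for instance $\mathcal{P}_i=p_i>0$ whenever $i\geq 2$ and $s_i\geq 2$, and the remaining cases are absorbed by the angle-bracket redefinition~(\ref{chbs})), so the only way ``not strong interaction'' can occur is via failure of weak interaction, i.e., some coordinate of $\mathcal{P}_1$ or $\mathcal{Q}_1$ being negative.

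Finally, the conclusion $\varpi=0$ would follow from the structural lemmas already in place: when $s_1\geq 2$, or when $s_1=1$ with $u=1$, the negativity of some coordinate of $\mathcal{Q}_1$ puts us in the vanishing branch of Lemma~\ref{variasconfigs}(\ref{T12}); when $u=m$, so $L_1(x_1)=\{x_2,\ldots,x_m\}$, the corresponding negativity of $\mathcal{P}_{1,d_1-1}$ matches exactly the hypothesis of Lemma~\ref{T11}, again giving $\varpi=0$. The hard part will be the residual mixed subcase $s_1=1$ with $1<u<m$, where $L_1(x_1)$ lies in the $p$-direction $r_1$ while $L_2(x_1)$ lies in the unique $q$-direction; for this, I would expand $\phi_1$ via~(\ref{chbs}) to split $\varpi$ into terms each of which either fits the hypotheses of Lemma~\ref{T11} or falls under Lemma~\ref{variasconfigs}(\ref{T12}), and then collect the contributions using the simplified product formula~(\ref{formareducida}) from Section~\ref{lgp}.
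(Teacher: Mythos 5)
Your inductive set-up collapses at the very first structural claim. You assert that, because $x_1$ is $T$-minimal among $x_1,\ldots,x_m$ and because of~(\ref{laassumpcion}), every $x_j$ with $j\geq2$ must lie in $x_1$-direction $d_1-1$ or $d_1$. Minimality in the $T$-order (a depth-first order, not a ``distance from root'' order) does \emph{not} rule out $x_1$-direction~$0$: if $x_1$ and $x_j$ sit on two different branches emanating from a common essential vertex $a$ lying strictly below both of them, and the DFS walk takes $x_1$'s branch first, then $a<x_1<x_j$ while $x_j$ lies in the component of $T\setminus\{x_1\}$ containing the root. In that situation $x_j\in L_1(x_0)$, the interaction parameter potentially at fault is $\mathcal R_0$, and you are nowhere near the configuration of Lemma~\ref{variasconfigs}(\ref{T12}). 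A second, independent problem with the same sentence: even when every $x_j$ with $j\geq2$ does lie in a positive $x_1$-direction, the identity $L_1(x_1)\cup L_2(x_1)=\{x_2,\ldots,x_m\}$ fails whenever some $x_j$ lies ``beyond'' another $x_i$ (with $1<i<j$), in which case $x_j$ is a pruned leaf of a component around $x_i$ rather than around $x_1$. So the reduction to Lemma~\ref{variasconfigs}(\ref{T12}) and Lemma~\ref{T11} is unjustified, and the remainder of the inductive step (comparing parameters of $\varpi$ with those of $\tau$, then invoking those two lemmas) is resting on a false premise.

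The paper's own argument takes a genuinely different, and shorter, route that sidesteps both of the above: it \emph{first} picks a single faulty interaction parameter and restricts attention to the sub-product of only those factors $\phi_i$ appearing in that parameter. Because the relevant set of pruned leaves is unchanged under this restriction, the sub-product automatically lands in one of three extreme configurations ($L_1(x_0)=\{x_1,\ldots,x_m\}$, or $L_2(x_1)=\{x_2,\ldots,x_m\}$, or $L_1(x_1)=\{x_2,\ldots,x_m\}$), which are precisely the ones handled by Lemma~\ref{variasconfigs} and Lemma~\ref{T11}. Vanishing of a sub-product of $\varpi$ then forces $\varpi=0$. No induction on $m$ is needed, and no global claim about where the $x_j$ sit relative to $x_1$ is required. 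To salvage your induction you would, at minimum, have to split into sub-cases according to which interaction parameter fails and restrict to the corresponding sub-product of factors — at which point the argument essentially becomes the paper's. Finally, your parenthetical that ``$\mathcal P_i>0$ for at least one $i$ automatically'' and that the remaining cases ``are absorbed by the angle-bracket redefinition'' is misleading: eliminating the weak-but-not-strong case is not automatic; it is precisely what the change of basis~(\ref{chbs}) together with Lemma~\ref{T11} is engineered to achieve, and one must still run that argument.
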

\begin{proof}
By focusing on the factors $\phi_i$ of $\varpi$ that are involved in a faulty interaction parameter, it suffices to consider three cases:  $L_1(x_0)=\{x_1,\ldots,x_m\}$, $L_{2}(x_1)=\{x_2,\ldots,x_m\}$ and $L_{1}(x_1)=\{x_2,\ldots,x_m\}$. The first two cases are covered by Lemma~\ref{variasconfigs}. On the other hand, there are two options for the instances of the third case that are not covered by Lemma~\ref{variasconfigs}(\ref{T12}): either $p_{1,j}>0$ for some $j\in\{1,2,\ldots,d_1-2\}$ or, else, $p_{1,j}=0$ for all $j\in\{1,2,\ldots,d_1-2\}$ ---in both cases $s_1=1$. In the latter option, the result follows from Lemma~\ref{T11}; in the former option we have $\langle k_1,x_1,p_1,q_1\rangle=\{k_1 \l x_1,p_1,q_1\}$ while the condition $p_{1,d_1-1}+\mathcal{R}_0(x_2,\ldots,x_m)<n$ is forced by the no-strong-interaction hypothesis, so that the result follows by interaction reasons in view of Lemma~\ref{variasconfigs}(\ref{T01}).
\end{proof}

The proof of Theorem~\ref{binarycore} will be complete once we set a one-to-one correspondence between the set of ordered strong interaction products $\varpi$ and the graded basis of $H^*(B_nT;R)$ formed by the elements in~(\ref{criticalrepresentatives}). With this in mind, we start with a two-step approach to the missing case in Lemma~\ref{variasconfigs}(\ref{T12}):

\begin{lemma}\label{T11full}
Assume $L_{1}(x_1)=\{x_2,x_3,\ldots,x_m\}$ with $s_1=1$. Then
\begin{equation}\label{progresivamente}
\varpi=\begin{cases}
\{R_0\l B_1\l\cdots\l B_m\}+\sum\raisebox{.6mm}{$\centerdot$}\,\{\underline{R_0}\l \dt{B}_1\l\cdots\l \dt{B}_m\}+\sum\raisebox{.6mm}{$\centerdot$}\,
\{R_0\l x_1,\underline{P_{1}},Q_{1}\l\dt{B}_2\l\cdots\l \dt{B}_m\}, & \mbox{if $P_{1}>0;$} \\
0, & \mbox{otherwise.}
\end{cases}
\end{equation}
\end{lemma}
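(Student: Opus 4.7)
The plan is to proceed by induction on $m$, handling the two branches of the statement separately. The vanishing assertion (when $P_1\not>0$) is immediate from Corollary~\ref{nonstrongiszero}, since $P_1>0$ is required for strong interaction at the essential vertex $x_1$. The base case $m=1$ of the $P_1>0$ regime is handled by direct inspection of the change-of-basis formula~(\ref{chbs}): when $p_{1,1}=\cdots=p_{1,d_1-2}=0$, the tuple $a=0$ in~(\ref{chbs}) produces the leading summand $\{R_0\l B_1\}$, while tuples $a>0$ contribute only terms of shape $\{\underline{R_0}\l\dt{B}_1\}$; when some $p_{1,j}>0$ with $j\leq d_1-2$, no change of basis is invoked and $\phi_1=\{R_0\l B_1\}$ directly.

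For the inductive step I would write $\varpi=\phi_1\cdot\Pi$ with $\Pi:=\phi_2\cdots\phi_m$. The hypotheses $L_1(x_1)=\{x_2,\ldots,x_m\}$ and $s_1=1$ force every essential vertex $x_j$ with $j\geq2$ to lie inside $T_{1,r_1}$ (where $r_1=d_1-1$). Applying the inductive hypothesis (together with Lemma~\ref{variasconfigs}, depending on how $x_3,\ldots,x_m$ cluster around $x_2$) yields an expansion $\Pi=\{R_0'\l B_2\l\cdots\l B_m\}+\Pi_{\mathrm{low}}$, where $R_0'$ is the $R_0$-parameter for the subproduct and $\Pi_{\mathrm{low}}$ collects basis elements whose first slot has been reduced or whose subsequent slots at $x_2$ carry reduced tuples. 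Multiplying $\phi_1$ by the leading term of $\Pi$, Corollary~\ref{interactionproducts} produces $\{R_0\l B_1\l\cdots\l B_m\}$ in the strong-interaction regime, while the weak-interaction corrections supplied by Theorem~\ref{productsvialowerpaths} (in the simplified form~(\ref{formareducida}) afforded by $s_1=1$ and~(\ref{laassumpcion})) land entirely inside the $\{R_0\l x_1,\underline{P_1},Q_1\l\dt{B}_2\l\cdots\l\dt{B}_m\}$-family. Multiplying $\phi_1$ (expanded via~(\ref{chbs}) when we are in Case~A) by $\Pi_{\mathrm{low}}$ contributes only to $\{\underline{R_0}\l\dt{B}_1\l\cdots\l\dt{B}_m\}$-terms, since either $\phi_1$ itself has a shifted $k_1$-coordinate or $\Pi_{\mathrm{low}}$ has an already-reduced first slot.

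The main obstacle will be the careful bookkeeping needed to confirm that every correction produced by Theorem~\ref{productsvialowerpaths} and by iterated applications of~(\ref{chbs}) falls into exactly one of the two allowed shapes. The simplification~(\ref{laassumpcion}) afforded by the binary-core hypothesis, together with the restriction $s_1=1$, is decisive: these conditions ensure that $\phi_1$ can weakly interact with the subproduct only through $x_1$-direction $r_1+1$, which is precisely what keeps the $R_0$- and $Q_1$-coordinates fixed in the second family of lower-order terms. Lemma~\ref{T11} can serve as a complementary vanishing tool, eliminating candidate terms that would otherwise spuriously contaminate the expansion.
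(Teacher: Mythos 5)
Your overall assembly of tools matches the paper's: dispatch the vanishing branch with Corollary~\ref{nonstrongiszero}, expand $\phi_1$ via~(\ref{chbs}), expand $\phi_2\cdots\phi_m$ via Lemma~\ref{variasconfigs}, and then multiply term-by-term using Corollary~\ref{interactionproducts} together with~(\ref{formareducida}) for the weak-interaction contributions. However, the proof as framed contains a structural misdiagnosis. You present it as an induction on $m$ and say you would ``apply the inductive hypothesis (together with Lemma~\ref{variasconfigs}, depending on how $x_3,\ldots,x_m$ cluster around $x_2$).'' There is no clustering to worry about: the hypothesis $L_1(x_1)=\{x_2,\ldots,x_m\}$ means that every $x_j$ with $j\geq 2$ bounds the single component $C_{1,r_1}(x_1,\ldots,x_m)$, so $x_2,\ldots,x_m$ are all mutual ``siblings,'' and consequently $L_{0,1}(x_2,\ldots,x_m)=\{x_2,\ldots,x_m\}$. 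Hence the subproduct $\phi_2\cdots\phi_m$ always falls squarely under Lemma~\ref{variasconfigs}(\ref{T01}) — never under an instance of Lemma~\ref{T11full} itself (for that, one would need $L_1(x_2)=\{x_3,\ldots,x_m\}$ with $s_2=1$, which the hypotheses do not supply). So the ``inductive hypothesis'' would never be applicable, and there is no induction to run; the paper simply invokes variasconfigs(\ref{T01}) once.

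Two smaller imprecisions are worth flagging. First, your description of $\Pi_{\mathrm{low}}$ as collecting terms ``whose first slot has been reduced or whose subsequent slots at $x_2$ carry reduced tuples'' is too broad: Lemma~\ref{variasconfigs}(\ref{T01}) produces only terms $\{\underline{R_0'}\l\dt{B}_2\l\cdots\l\dt{B}_m\}$, in which the first slot is strictly reduced; no terms with unchanged $R_0'$ and merely reduced $x_2$-data appear. Keeping this tight matters because the final bookkeeping depends on knowing that every term of $\phi_1\cdot\Pi_{\mathrm{low}}$ already lands in the $\{\underline{R_0}\l\dt{B}_1\l\cdots\l\dt{B}_m\}$-family. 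Second, Lemma~\ref{T11} does not enter the proof of this lemma at all — it is used once, inside the proof of Corollary~\ref{nonstrongiszero}. Invoking it here as a ``complementary vanishing tool'' is harmless but misleading; the weak-interaction terms produced by~(\ref{formareducida}) are not spurious contaminants to be killed but are precisely the $\{R_0\l x_1,\underline{P_1},Q_1\l\dt{B}_2\l\cdots\l\dt{B}_m\}$-terms that belong in the answer. The key observation the paper uses, and that your write-up omits, is the identity $P_{1,d_1-1}=p_{1,d_1-1}+R_0'-n$ linking the interaction parameter of the full product to that of the subproduct; this is what makes the direct inspection go through.
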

\begin{proof}
Interactions occur only in $T_{1,d_1-1}$, so $R_0=k_1$, $Q_i=q_{i}$ for $1\leq i\leq m$, and $P_{i}=p_{i}$ for $2\leq i\leq m$. By Corollary~\ref{nonstrongiszero}, only the case $P_{1}>0$ needs to be argued. Use Lemma~\ref{variasconfigs}(\ref{T01}) to write $\varpi =\phi_1\cdot(\phi_2\cdots\phi_m)$ as
$$
\left(\{R_0\l x_1,p_{1},Q_{1}\} +\sum\{\underline{R_0}\l x_1,\centerdot,Q_{1}\}\right)
\left(\{R'_0\l B_2\l\cdots\l B_m\}+\sum\{\underline{R'_0}\l \dt{B}_2\l\cdots\l\dt{B}_m\}\right),
$$
where $R'_0=\mathcal{R}_0(x_2,\ldots,x_m)$ (so $P_{1,d_1-1}=p_{1,d_1-1}+R'_0-n$).
The result then follows by direct inspection, though this time~(\ref{formareducida}) needs to be used in the analysis of the products giving rise to the terms in both summations of~(\ref{progresivamente}).
\end{proof}

\begin{proposition}\label{T1epsilon}
Assume $L_{1}(x_1)=\{x_2,x_3,\ldots,x_u\}$ and $L_{2}(x_1)=\{x_{u+1},\ldots,x_{m-1},x_m\}$, with $1<u<m$ and $s_1=1$. Then
$$
\varpi=\begin{cases}
\{R_0\l B_1\l\cdots\l B_m\}+\sum\raisebox{.6mm}{$\centerdot$}\,\{\underline{R_0}\l \dt{B}_1\l\cdots\l \dt{B}_m\}+\sum\raisebox{.6mm}{$\centerdot$}\,\{R_0\l x_1,\underline{P_{1},Q_{1}}\l\dt{B}_2\l\cdots\l \dt{B}_m\},&\mbox{if $P_{1}>0\leq Q_{1}$;}\\0,&\mbox{otherwise.}
\end{cases}
$$
Here and below each expression $\underline{P_{1},Q_{1}}$ is meant to represent a pair $V_1,W_1$ of unspecified tuples of integer numbers with $V_1=(V_{1,1},\ldots,V_{1,d_1-1})$, $W_1=(W_{1,1})$ and such that $V_1>0\leq W_1$ and $(V_1,W_1)<(P_{1},Q_{1})$ in the product ordering, i.e., $V_{1,j}\leq P_{1,j}$ for $j=1,2,\ldots, d_1-1$ and $W_{1,1}\leq Q_{1,1}$, with at least one of the last $d_1$ inequalities being strict.
\end{proposition}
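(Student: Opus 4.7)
The plan is to write $\varpi = A \cdot \pi$ where $A := \phi_1 \phi_2 \cdots \phi_u$ and $\pi := \phi_{u+1} \cdots \phi_m$, then expand $A$ using Lemma~\ref{T11full} and combine with the multiplication by~$\pi$ via the cup-product machinery of Sections~\ref{sectioncupproducts} and~\ref{lgp}. The ``otherwise'' case ($P_1 \not> 0$ or $Q_1 \not\geq 0$) is immediate from Corollary~\ref{nonstrongiszero}, so henceforth assume $P_1 > 0 \leq Q_1$.

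For the subproduct $A$, the pruned leaves satisfy $L_1(x_1)(x_1,\ldots,x_u) = \{x_2,\ldots,x_u\}$ and $L_2(x_1)(x_1,\ldots,x_u) = \varnothing$, so the hypothesis of Lemma~\ref{T11full} is met. Because $\{x_2,\ldots,x_u\}$ and $\{x_{u+1},\ldots,x_m\}$ lie in distinct components of $T \setminus \{x_1\}$, the interaction parameters $\mathcal{P}_j, \mathcal{Q}_j$ for $j \in \{2,\ldots,u\}$ agree in the subproduct $A$ and in the full product $\varpi$, and likewise $\mathcal{P}_1(x_1,\ldots,x_u) = P_1$. Lemma~\ref{T11full} therefore gives
\begin{equation*}
A = \{k_1 \l x_1, P_1, (q_{1,1}) \l B_2 \l \cdots \l B_u\} + \sum\raisebox{.6mm}{$\centerdot$}\,\{\underline{k_1} \l \dt B_1 \l \dt B_2 \l \cdots \l \dt B_u\} + \sum\raisebox{.6mm}{$\centerdot$}\,\{k_1 \l x_1, \underline{P_1}, (q_{1,1}) \l \dt B_2 \l \cdots \l \dt B_u\}.
\end{equation*}

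Now I multiply by $\pi$. Since the essential vertices of $\pi$ all lie in $T_{1,d_1}$, the only non-trivial new interactions appear at $x_1$ in direction $d_1$. For $j \geq u+1$, the pruned leaves $L_{j,\ell}(x_1,\ldots,x_m)$ involve only elements of $\{x_{u+1},\ldots,x_m\}$, so the $B_j$ ($j \geq u+1$) coincide in the subproduct $\pi$ and in the full product $\varpi$. Applying Corollary~\ref{interactionproducts} to the leading summand of $A$ times $\pi$ thus yields $\{R_0 \l B_1 \l \cdots \l B_m\}$, with $(q_{1,1})$ upgraded to $Q_1 = q_{1,1} + \sum_{j > u}(k_j - n)$. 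The $\underline{k_1}$ corrections from $A$ multiplied by $\pi$ produce $\underline{R_0}$ corrections by the same reasoning.

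The main obstacle is the third summand. The preexisting $\underline{P_1}$ corrections from $A$, when multiplied by $\pi$, yield terms in the slot $(V_1, W_1)$ at $x_1$ with $V_1 < P_1$ but $W_1 = Q_1$. The cup-product formula of Theorem~\ref{productsvialowerpaths}, applied with the lock at $x_1$ in direction $d_1$ during the multiplication by $\pi$, additionally generates correction terms in which $W_1 < Q_1$. The delicate step is to verify that these two sources of corrections combine coherently---without omission or double counting---into the single summation $\sum\raisebox{.6mm}{$\centerdot$}\,\{R_0 \l x_1, \underline{P_1, Q_1} \l \dt B_2 \l \cdots \l \dt B_m\}$ satisfying the product-order restriction $(V_1, W_1) < (P_1, Q_1)$ with $V_1 > 0 \leq W_1$ and at least one strict inequality, while simultaneously ruling out any cross terms that would touch slots beyond $x_1$ thanks to the separation between $T_{1,d_1-1}$ and $T_{1,d_1}$.
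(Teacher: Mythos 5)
Your decomposition $\varpi = (\phi_1\cdots\phi_u)\cdot(\phi_{u+1}\cdots\phi_m)$ and the use of Lemma~\ref{T11full} on the first block are exactly what the paper does, and your reduction to $P_1 > 0 \leq Q_1$ via Corollary~\ref{nonstrongiszero} is correct. The gap is in how you handle the second block $\pi = \phi_{u+1}\cdots\phi_m$. The paper does not multiply the expanded $A$ by $\pi$ directly; it first expands $\pi$ as well, using Lemma~\ref{variasconfigs}(\ref{T01}). This is applicable because $L_1(x_0)(x_{u+1},\ldots,x_m) = L_2(x_1)(x_1,\ldots,x_m) = \{x_{u+1},\ldots,x_m\}$ (the root component of $T\setminus\{x_{u+1},\ldots,x_m\}$ is bounded exactly by the vertices previously bounding $C_{1,d_1}$), and yields
$$
\pi = \{R'_0\l B_{u+1}\l\ldots\l B_m\}+\sum\raisebox{.6mm}{$\centerdot$}\,\{\underline{R'_0}\l \dt{B}_{u+1}\l\ldots\l \dt{B}_m\}, \qquad R'_0=\mathcal{R}_0(x_{u+1},\ldots,x_m),
$$
together with the key identity $Q_{1,1}=q_{1,1}+R'_0-n$. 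You neither make this observation about $L_1(x_0)$ nor produce this expansion, and without it your invocations of Corollary~\ref{interactionproducts} and Theorem~\ref{productsvialowerpaths} are imprecise: Corollary~\ref{interactionproducts} applies to $m$-fold products of $1$-dimensional $\{\cdot\}$-basis elements, not to the product of a $u$-dimensional $\{\cdot\}$-basis element with a string of $\langle\cdot\rangle$-factors. (What it actually gives, after expanding $\pi$, is that (leading of $A$) times the \emph{leading term of $\pi$} equals $\{R_0\l B_1\l\cdots\l B_m\}$; the $\underline{R'_0}$ corrections of $\pi$ contribute separately.) Also, in the binary-core setting one uses the reduced form~(\ref{formareducida}) rather than the full Theorem~\ref{productsvialowerpaths}. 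Finally, you correctly identify the ``delicate step'' — that the pre-existing $\underline{P_1}$ corrections and the freshly generated $W_1 < Q_1$ corrections assemble into the single summation with the strict product-order bound $\underline{P_1,Q_1}$ — but you explicitly leave it unverified; the paper's explicit two-sided expansion is precisely the device that makes the term-by-term ``inspection'' routine, so the omission is a genuine gap rather than a stylistic choice.
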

\begin{proof}
By Corollary~\ref{nonstrongiszero}, it suffices to consider the case $P_1>0\leq Q_1$. Lemmas~\ref{variasconfigs}(\ref{T01}) and \ref{T11full} allow us to write $\varpi=(\phi_1\cdots\phi_u)\cdot(\phi_{u+1}\cdots\phi_m)$ as the product of 
$$
\{R_0\l x_1,P_{1},q_{1}\l B_2\l\ldots\l B_u\}
+\sum\raisebox{.6mm}{$\centerdot$}\,\{\underline{R_0}\l \dt{B}_1\l \dt{B}_2\l\ldots\l \dt{B}_u\}+\sum\raisebox{.6mm}{$\centerdot$}\,\{R_0\l x_1,\underline{P_{1}},q_{1}\l \dt{B}_2\l\ldots\l \dt{B}_u\}
$$
with
$$
\{R'_0\l B_{u+1}\l\ldots\l B_m\}+\sum\raisebox{.6mm}{$\centerdot$}\,\{\underline{R'_0}\l \dt{B}_{u+1}\l\ldots\l \dt{B}_m\},
$$
where $R'_0=\mathcal{R}_0(x_{u+1},\ldots,x_m)$ (so $Q_{1,1}=q_{1,1}+R'_0-n$). The result follows by inspection.
\end{proof}

We are now ready to set up the strategy for completing the proof of Theorem~\ref{binarycore}. By Lemma~\ref{systemofequations}, Remark~\ref{configuraciones} and Corollary~\ref{nonstrongiszero}, the goal reduces to describing, for fixed essential vertices $x_1<\cdots<x_m$, a~partial ordering $\preceq$ on the set of basis elements $\{t_0\l x_1,u_{1},v_{1}\l\cdots\l x_m,u_{m},v_{m}\}$ of $H^m(\UDnT)$ such that any strong interaction product (\ref{OP}) can be expressed by a congruence
\begin{equation}\label{congruence}
\aop\equiv\{R_0\l B_1\l\cdots\l B_m\}
\end{equation}
modulo basis elements that are $\preceq$-smaller than $\{R_0\l B_1\l\cdots\l B_m\}$. The partial ordering $\preceq$ we need becomes apparent by writing either of the triples $(x_1,\underline{P_1,Q_1})$, $(x_1,\underline{P_1},Q_1)$ and $(x_1,P_1,\underline{Q_1})$ in Proposition \ref{T1epsilon} and Lemmas~\ref{T11full} and~\ref{variasconfigs}(\ref{T12}), respectively, as $\underline{B_1}$. Indeed, in such terms, the ($P_1>0\leq Q_1$)-conclusions in those results can be written as 
\begin{equation}\label{apparent}
\varpi=\{R_0\l B_1\l\cdots\l B_m\}+\sum\raisebox{.6mm}{$\centerdot$}\,\{\underline{R_0}\l \dt{B}_1\l\cdots\l \dt{B}_m\}+\sum\raisebox{.6mm}{$\centerdot$}\,\{R_0\l \underline{B_1}\l\dt{B}_2\l\cdots\l \dt{B}_m\}.
\end{equation}

\begin{definition}\label{levels}
The $\ell$-th level of pruned leaves $\mathcal{L}_\ell$ of the essential vertices $x_1<\cdots<x_m$ is 
$$
\mathcal{L}_\ell=\mathcal{L}_\ell(x_1,\ldots,x_m):=
\begin{cases}
\;L_1(x_0), & \mbox{if $\ell=1$;}\\
\;\bigcup_{x_i\in\mathcal{L}_{\ell-1}}\left(\rule{0mm}{4mm}L_1(x_i)\cup L_2(x_i)\right), & \mbox{if $\ell>1$.}
\end{cases}
$$
The interaction level of the vertices $x_1<\cdots<x_m$ is the largest $\ell$ such that $\mathcal{L}_\ell\neq\varnothing$. Furthermore, extending the notation introduced in~(\ref{prelevel}) and~(\ref{apparent}), let $B^{(\ell)}$ denote the collection of blocks $B_i$ with $x_i\in\mathcal{L}_\ell$, and let $\dt{B}^{(\ell)}$ stand for any collection of blocks $\dt{B}_i$ with $x_i\in\mathcal{L}_\ell$. On the other hand, $\underline{B^{(\ell)}}$ stands for any collection of blocks $(x_i,V_{i},W_{i})$, with $x_i\in\mathcal{L}_\ell$, satisfying:
\begin{itemize}
\item $V_i>0\leq W_i$ and $(V_i,W_i)\leq(P_{i},Q_{i})$ (the latter in the product ordering) for all $x_i\in\mathcal{L}_\ell$, and
\item $(V_i,W_i)\neq(P_{i},Q_{i})$ for at least one $x_i\in\mathcal{L}_\ell$.
\end{itemize}
\end{definition}

Note that the definition of $\underline{B^{(\ell)}}$ is less restrictive than actually requiring $\underline{B^{(\ell)}}$ to be a collection of blocks $\underline{B_i}$ with $x_i\in\mathcal{L}_\ell$. As in Proposition \ref{T1epsilon}, the condition we want for $\underline{B^{(\ell)}}$ is based on a \emph{strict} product-order inequality. The reason for this becomes apparent in the proof of Proposition \ref{induccinfinal} below.

\begin{example}\label{baseinductiva}{\em
 Lemma~\ref{variasconfigs}(\ref{T01}) gives $\varpi=\{R_0,B^{(1)}\}+\sum\raisebox{.6mm}{$\centerdot$}\,\{\underline{R}_0,\dt{B}^{(1)}\}$ in interaction level 1 (under a strong condition hypothesis). Likewise, (\ref{apparent}) becomes
\begin{equation}\label{ell2L1} 
\varpi=\{R_0\l B^{(1)}\l B^{(2)}\}+\sum\raisebox{.6mm}{$\centerdot$}\,\{\underline{R_0}\l \dt{B}^{(1)}\l \dt{B}^{(2)}\}+\sum\raisebox{.6mm}{$\centerdot$}\,\{R_0\l \underline{B^{(1)}}\l \dt{B}^{(2)}\}
\end{equation}
in interaction level 2 (with $\mathcal{L}_1=\{x_1\}$, so $B^{(1)}$ consist of $B_1$ alone). In full generality:
}\end{example}

\begin{proposition}\label{induccinfinal}
Let $x_1<\cdots<x_m$ be essential vertices having interaction level $\ell$. If $\varpi$ is a strong interaction product, then
\begin{equation}\label{etiquetafinal}
\begin{split}
\varpi\,=&\,\,\{R_0\l B^{(1)}\l\cdots\l B^{(\ell)}\}+\sum\raisebox{.6mm}{$\centerdot$}\,\{\underline{R_0}\l\dt{B}^{(1)}\l\cdots\l\dt{B}^{(\ell)}\}\\
&+\sum\raisebox{.6mm}{$\centerdot$}\,\{R_0\l\underline{B^{(1)}}\l\dt{B}^{(2)}\l\cdots\l\dt{B}^{(\ell)}\}+\cdots+
\sum\raisebox{.6mm}{$\centerdot$}\,\{R_0\l B^{(1)}\l\cdots\l B^{(\ell-2)}\l \underline{B^{(\ell-1)}}\l\dt{B}^{(\ell)}\}.
\end{split}
\end{equation}
\end{proposition}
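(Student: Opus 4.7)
The plan is to prove~(\ref{etiquetafinal}) by nested induction, primary on the interaction level $\ell$ and secondary on $|\mathcal{L}_1|$. The base cases $\ell=1,2$ have been recorded in Example~\ref{baseinductiva} via Lemma~\ref{variasconfigs}(\ref{T01}) and Proposition~\ref{T1epsilon}, so I focus on the inductive step with $\ell\geq 3$. First I would treat the case $|\mathcal{L}_1|=1$, say $\mathcal{L}_1=\{x_1\}$. The key move is the factorization $\varpi=\phi_1\cdot\varpi'$ with $\varpi':=\phi_2\cdots\phi_m$: since removing $x_1$ promotes the former $\mathcal{L}_2(\varpi)$ to $\mathcal{L}_1(\varpi')$ and leaves the interaction parameters of descendant vertices unchanged (all relevant pruned leaves lie away from the root), $\varpi'$ is itself an ordered strong interaction product of interaction level $\ell-1$. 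By the primary inductive hypothesis, $\varpi'$ can be written in the form~(\ref{etiquetafinal}) relative to its own $\ell-1$ levels. Next I would multiply $\phi_1$ by $\varpi'$ distributively, evaluating each product $\phi_1\cdot X$ (with $X$ a basis element in $\varpi'$'s expansion) via Corollary~\ref{interactionproducts} in the strong-interaction case or Theorem~\ref{productsvialowerpaths} with~(\ref{formareducida}) in the weak-interaction case, and re-grouping the resulting summands into the correction families of~(\ref{etiquetafinal}) for $\varpi$.

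The case $|\mathcal{L}_1|\geq 2$ would then be reduced to the $|\mathcal{L}_1|=1$ case by the secondary induction: list $\mathcal{L}_1=\{x_{i_1},\dots,x_{i_k}\}$ with $i_1<\cdots<i_k$, use the $T$-order to factor $\varpi=\varpi_1\cdots\varpi_k$ along the contiguous blocks $\mathcal{T}_{i_j}=\{x_{i_j},\dots,x_{i_{j+1}-1}\}$ of vertices descending from $x_{i_j}$ (each $\varpi_j$ having $|\mathcal{L}_1(\varpi_j)|=1$), and then assemble the products $\varpi_1\cdots\varpi_k$ through a Lemma~\ref{variasconfigs}(\ref{T01})-type root-level analysis that matches interaction parameters across branches.

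The hard part will be the bookkeeping for the key multiplication step: verifying that, for each $0\leq s\leq\ell-2$, multiplying $\phi_1$ by the Level-$s$ correction family of $\varpi'$ contributes only to the Level-$0$, Level-$1$, and Level-$(s{+}1)$ correction families of $\varpi$, and never introduces a term outside those listed in~(\ref{etiquetafinal}). The principal-times-principal product, handled by a Proposition~\ref{T1epsilon}-type analysis, yields $\varpi$'s principal term plus exactly the Level-$0$ and Level-$1$ corrections. For the cross-products, the delicate verification relies on the binary-core assumption~(\ref{laassumpcion}) and the simplified weak-interaction formula~(\ref{formareducida}), which together guarantee that the $\dt{B}^{(j)}$-blocks occurring in $\varpi'$'s Level-$s$ correction do not propagate uncontrolled ``intermediate-level'' deviations after multiplication by $\phi_1$; any such deviation gets absorbed into the permissible Level-$0$ or Level-$1$ correction family, which is precisely what~(\ref{etiquetafinal}) allows.
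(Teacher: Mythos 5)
Your argument proceeds essentially as the paper's --- induction on the interaction level, factoring off $\phi_1$ in the singleton-$\mathcal{L}_1$ case, assembling along $\mathcal{L}_1$ in general, with~(\ref{formareducida}) controlling the cross terms and the rest left to ``direct inspection'' --- but you use a different factorization at the singleton step. You factor $\varpi = \phi_1\cdot\varpi'$ with $\varpi' = \phi_2\cdots\phi_m$ and apply the primary inductive hypothesis to the whole tail (which is indeed a strong interaction product of level $\ell-1$, as you argue). The paper instead writes $\varpi = \phi_1\cdot(\phi_2\cdots\phi_t)\cdot(\phi_{t+1}\cdots\phi_m)$, separating the tail according to which of the two positive $x_1$-directions carrying essential vertices each $x_i$ lies in, applies the inductive hypothesis to each group, and multiplies the three expansions. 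The tripartite split buys cleaner bookkeeping: $\phi_1$'s $P_1$-interaction touches only $\phi_2\cdots\phi_t$, its $Q_1$-interaction only $\phi_{t+1}\cdots\phi_m$, and the two subproducts do not interact at all (they live in disjoint components of $T\setminus\{x_1\}$), so the three expansions combine term-by-term without ever having to disentangle the two $x_1$-directions inside a single level-$s$ correction family. Your bipartite version must handle both $x_1$-directions entering $\varpi'$'s first level simultaneously; this is precisely the ``delicate verification'' you flag and leave open, and carrying it out would effectively reproduce the paper's decoupling implicitly during the inspection. Finally, your secondary induction on $|\mathcal{L}_1|$ is a cosmetic repackaging of the paper's ``prove the singleton case at level $\ell$, then assemble along $\mathcal{L}_1$''; both rely on having the statement at level $\ell-1$ for arbitrary $|\mathcal{L}_1|$.
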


\begin{proof}[Proof of Theorem~\ref{binarycore} (conclusion)]
Partially order the set of basis elements $\{v_0\l x_1,v_{1},w_{1}\l\cdots\l x_m,v_{m},w_{m}\}$ by means of a level-wise lexicographical comparison of their $v$- and $w$-ingredients. Then~(\ref{etiquetafinal}) yields the required congruence~(\ref{congruence}).
\end{proof}

\begin{proof}[Proof of Proposition~\ref{induccinfinal}]
The argument is by direct computation, proceeding by induction on $\ell$ and with Example~\ref{baseinductiva} grounding the induction. The real challenge consists on setting a suitable notation so arguments can be seen clearly. With this in mind, we start by checking the situation in the special case $\mathcal{L}_1=\{x_1\}$ (so $R_0=k_1$), i.e., the generalization of~(\ref{ell2L1}) to higher interaction levels. In such a situation
\begin{equation}\label{seborrara}
\mathcal{L}_\lambda(x_2,\ldots,x_m)=\mathcal{L}_{\lambda+1}(x_1,\ldots,x_m),\mbox{ for $\lambda\geq2$.}
\end{equation}
Accordingly, we reset notation and start level-number counting at 2 (rather than at 1) for $x_2<\cdots<x_m$, so to make it compatible with that for $x_1<\cdots<x_m$. Thus,~(\ref{seborrara}) gets replaced by
\begin{equation}\label{sustituyo}
\mathcal{L}_\lambda(x_2,\ldots,x_m)=\mathcal{L}_\lambda(x_1,\ldots,x_m),\mbox{ for $\lambda\geq3$.}
\end{equation}
Let $x_2,x_3,\ldots,x_t$ be the essential vertices lying on the component of $T\setminus\{x_1\}$ in $x_1$-direction $d_1-1$, while $x_{t+1},x_{t+2},\ldots,x_m$ be the vertices lying on the component of $T\setminus\{x_1\}$ in $x_1$-direction $d_1$ ($1\leq t\leq m$). Then, if $B^{(\lambda)}$, $\dt{B}^{(\lambda)}$ and $\underline{B^{(\lambda)}}$ stand for collections defined by all the vertices $x_1,\ldots,x_m$, we write
\begin{equation}\label{porpartes}
B^{(\lambda)}_{[\epsilon]}, \;\; \dt{B}^{(\lambda)}_{[\epsilon]} \;\;\mbox{or}\;\; \underline{B^{(\lambda)}_{[\epsilon]}},
\end{equation}
with $\varepsilon=1$, to denote the corresponding parts coming only from the vertices $x_2,\ldots,x_t$. Likewise, the case $\epsilon=2$ in~(\ref{porpartes}) stands for the parts that come from the vertices $x_{t+1},\ldots,x_m$. For instance, $B^{(\lambda)}=B^{(\lambda)}_{[1]}\cup B^{(\lambda)}_{[2]}$. In these terms, we use induction to write $\varpi=\phi_1\cdot(\phi_2\cdots\phi_t)\cdot(\phi_{t+1}\cdots\phi_m)$ as the product of
the three expressions
$$\{R_0\l x_1,p_{1},q_{1}\}+\sum\{\underline{R_0}\l x_1,\centerdot,q_{1}\},$$ 
$$
\left\{R'_0\ll B_{[1]}^{(2)}\ll\cdots\ll B_{[1]}^{(\ell)}\right\}+
\sum\raisebox{.6mm}{$\centerdot$}\,\left\{\underline{R'_0}\ll\dt{B}_{[1]}^{(2)}\ll\cdots\ll\dt{B}_{[1]}^{(\ell)}\right\}+\sum_{3\leq j\leq \ell}\raisebox{.6mm}{$\centerdot$}\,\left\{R'_0\ll B_{[1]}^{(2)}\ll\cdots\ll B_{[1]}^{(j-2)}\ll \underline{B_{[1]}^{(j-1)}}\ll\dt{B}_{[1]}^{(j)}\ll\cdots\ll \dt{B}_{[1]}^{(\ell)}\right\}\!,
$$
and
$$
\left\{R''_0\ll B_{[2]}^{(2)}\ll\cdots\ll B_{[2]}^{(\ell)}\right\}+
\sum\raisebox{.6mm}{$\centerdot$}\,\left\{\underline{R''_0}\ll\dt{B}_{[2]}^{(2)}\ll\cdots\ll\dt{B}_{[2]}^{(\ell)}\right\}
+\sum_{3\leq j\leq \ell}\raisebox{.6mm}{$\centerdot$}\,\left\{R''_0\ll B_{[2]}^{(2)}\ll\cdots\ll B_{[2]}^{(j-2)}\ll \underline{B_{[2]}^{(j-1)}}\ll\dt{B}_{[2]}^{(j)}\ll\cdots\ll \dt{B}_{[2]}^{(\ell)}\right\}\!,
$$
where $R'_0=\mathcal{R}_0(x_2,\ldots,x_t)$ and $R''_0=\mathcal{R}_0(x_{t+1},\ldots,x_m)$. Note the compactified notation for the two summations running over $j$, each of which really stands for sums of summations as in~(\ref{etiquetafinal}). Note also that the interaction level of the vertices $x_2,\ldots,x_t$ (or $x_{t+1},\ldots,x_m$) could be smaller than $\ell$, in which case some of the corresponding collections of blocks are empty. Then, by direct inspection and interaction reasons (using~(\ref{formareducida}) when $s_1=1$ and the interaction parameter under consideration lies in $x_1$-direction $d_1-1$), the product of the three expressions above takes the form~(\ref{etiquetafinal}). This completes the proof when $\mathcal{L}_1$ is a singleton.

In general, $\mathcal{L}_1$ consists of, say, vertices $x_1=x_{i_1}<\cdots<x_{i_k}$, and we evaluate $\varpi$ as the length-$k$ product
\begin{equation}\label{lengthk}
(\phi_1\cdots\phi_{i_2-1})(\phi_{i_2}\cdots\phi_{i_3-1})\cdots(\phi_{i_k}\cdots\phi_m).
\end{equation}
(This time there is no need to reset notation so to get the analogue of~(\ref{sustituyo}) to hold.) We have just seen that the $w$-th factor in~(\ref{lengthk}) takes the form
$$
\left\{r_{i_w}\ll B_{[w]}^{(1)}\ll{\cdots}\ll B_{[w]}^{(\ell)}\right\}+
\sum\raisebox{.6mm}{$\centerdot$}\,\left\{\underline{r_w}\ll\dt{B}_{[w]}^{(1)}\ll{\cdots}\ll\dt{B}_{[w]}^{(\ell)}\right\}+
\sum_{2\leq j\leq \ell}\raisebox{.6mm}{$\centerdot$}\,\left\{r_{i_w}\ll B_{[w]}^{(1)}\ll{\cdots}\ll B_{[w]}^{(j-2)}\ll \underline{B_{[w]}^{(j-1)}}\ll\dt{B}_{[w]}^{(j)}\ll{\cdots}\ll \dt{B}_{[w]}^{(\ell)}\right\}\!,
$$
where
$$
B_{[w]}^{(1)}:=B_{i_w}, \ \ \dt{B}_{[w]}^{(1)}:=\dt{B}_{i_w}, \ \ \underline{B_{[w]}^{(1)}}:=\underline{B_{i_w}}
$$
and, for interaction levels larger than 1, a subindex `[$w$]' in a collection of blocks indicates that only blocks in positive $x_{i_w}$-directions are to be taken. The required form (\ref{etiquetafinal}) for the product of all these expressions follows again from direct inspection ---this time without requiring the use of~(\ref{formareducida}).
\end{proof}


\bigskip

{\sc \ 

Departamento de Matem\'aticas

Centro de Investigaci\'on y de Estudios Avanzados del I.P.N.

Av.~Instituto Polit\'ecnico Nacional n\'umero~2508, San Pedro Zacatenco

M\'exico City 07000, M\'exico.}

\tt jesus@math.cinvestav.mx

idskjen@math.cinvestav.mx

\end{document}